\numberwithin{equation}{section}
\newtheorem{Proposition}[equation]{Proposition}
\newtheorem{Lemma}[equation]{Lemma}
\newtheorem{Theorem}[equation]{Theorem}
\newtheorem{Corollary}[equation]{Corollary}
\theoremstyle{definition}
\newtheorem{Remark}[equation]{Remark}
\newtheorem{Algorithm}[equation]{Algorithm}
\def\iso{\cong}
\def\into{\hookrightarrow}
\def\onto{\twoheadrightarrow}
\def\isoto{\stackrel{\sim}{\to}}
\def\sub{\subseteq}
\def\-mod{\text{-mod}}
\def\lan{\langle}
\def\ran{\rangle}
\def\bar{\overline}
\def\ad{{\operatorname{ad}\,}}
\def\ne{{\operatorname{ne}}}
\def\reg{{\operatorname{reg}}}
\def\part{{\operatorname{part}}}
\def\Ann{\operatorname{Ann}}
\def\Pyr{\operatorname{Pyr}}
\def\RS{\operatorname{RS}}
\def\cVA{\mathcal{V\!A}}
\def\C{{\mathbb C}}
\def\Z{{\mathbb Z}}
\def\op{\operatorname{op}}
\def\Prim{\operatorname{Prim}}
\def\col{\operatorname{col}}
\def\row{\operatorname{row}}
\def\End{{\operatorname{End}}}
\def\GL{\operatorname{GL}}
\def\Sp{\operatorname{Sp}}
\def\SO{\operatorname{SO}}
\def\O{\operatorname{O}}
\def\Lie{\operatorname{Lie}}
\def\a{\mathfrak a}
\def\b{\mathfrak b}
\def\g{\mathfrak g}
\def\h{\mathfrak h}
\def\k{\mathfrak k}
\def\m{\mathfrak m}
\def\n{\mathfrak n}
\def\p{\mathfrak p}
\def\q{\mathfrak q}
\def\r{\mathfrak r}
\def\s{\mathfrak s}
\def\t{\mathfrak t}
\def\gl{\mathfrak{gl}}
\def\sl{\mathfrak{sl}}
\def\so{\mathfrak{so}}
\def\sp{\mathfrak{sp}}
\def\cO{\mathcal O}
\def\cL{\mathcal L}
\def\bp{{\mathbf p}}
\def\bq{{\mathbf q}}
\def\br{{\mathbf r}}
\def\sgn{{\operatorname{sgn}}}
\def\Tab{{\operatorname{Tab}}}
\def\sTab{{\operatorname{sTab}}}
\def\Row{{\operatorname{Row}}}
\def\coord{{\operatorname{coord}}}
\def\word{{\operatorname{word}}}
\def\part{{\operatorname{part}}}
\def\c{{\operatorname{c}}}
\def\eps{\epsilon}
\def\mf {\mathfrak}
\newdimen\Hoogte    \Hoogte=12pt    
\newdimen\Breedte   \Breedte=12pt   
\newdimen\Dikte     \Dikte=0.5pt    
\newenvironment{Young}{\begingroup
       \def\vr{\vrule height0.8\Hoogte width\Dikte depth 0.2\Hoogte}
       \def\fbox##1{\vbox{\offinterlineskip
                    \hrule height\Dikte
                    \hbox to \Breedte{\vr\hfill##1\hfill\vr}
                    \hrule height\Dikte}}
       \vbox\bgroup \offinterlineskip \tabskip=-\Dikte \lineskip=-\Dikte
            \halign\bgroup &\fbox{##\unskip}\unskip  \crcr }
       {\egroup\egroup\endgroup}
\def\Diagram#1{\relax\ifmmode\vcenter{\,\begin{Young}#1\end{Young}\,}\else%
              $\vcenter{\,\begin{Young}#1\end{Young}\,}$\fi}
\title{Finite dimensional irreducible representations of
finite $W$-algebras associated to even multiplicity nilpotent orbits in classical Lie
algebras}
\author
{Jonathan S. Brown and Simon M. Goodwin}
\address{School of Mathematics, University of Birmingham, Birmingham, B15 3LX,~UK}
\email{brownjs@for.mat.bham.ac.uk, goodwin@for.mat.bham.ac.uk}
\thanks{2010 {\em Mathematics Subject Classification}:  17B10,
81R05.}
\begin{document}

\begin{abstract}
We consider finite $W$-algebras $U(\g,e)$ associated to even multiplicity
nilpotent elements in classical Lie
algebras. We give a classification of finite dimensional irreducible
$U(\g,e)$-modules with integral central character in terms of the
highest weight theory from \cite{BGK}.  As a corollary, we obtain a
parametrization of primitive ideals of $U(\g)$ with associated
variety the closure of the adjoint orbit of $e$ and integral central
character.
\end{abstract}

\maketitle

\section{Introduction}

Let $\g$ be a reductive Lie algebra over $\C$ and let $e \in \g$ be
nilpotent. The finite $W$-algebra $U(\g,e)$ associated to the pair
$(\g,e)$ is a finitely generated algebra obtained from $U(\g)$ by a
certain quantum Hamiltonian reduction. Finite $W$-algebras were
introduced to the mathematical literature by Premet in \cite{Pr1},
though they appeared earlier in mathematical physics under a slightly different
guise,
see for example \cite{DK}. It is proved in
\cite{DDDHK} that the definition in the mathematical physics
literature via BRST cohomology agrees with Premet's definition.
A recent survey of the theory of finite
$W$-algebras is given in \cite{Lo4}.

There is a close connection between finite dimensional irreducible
representations of $U(\g,e)$ and primitive ideals of $U(\g)$
stemming from Skryabin's equivalence, \cite{Sk}. This link was
investigated further in \cite{Gi,Pr2,Pr3,Lo1,Lo2} culminating in
\cite[Theorem 1.2.2]{Lo2}, which says that there is a bijection
between the primitive ideals of $U(\g)$ whose associated variety is
the closure of the adjoint orbit of $e$, and the orbits on the
component group of the centralizer of $e$ on the isomorphism classes
of finite dimensional irreducible $U(\g,e)$-modules.  An important
tool in these results is Losev's approach to $U(\g,e)$ via Fedosov
quantization introduced in \cite{Lo1}.

Further motivation for the study of finite $W$-algebras comes from
noncommutative deformations of singularities, see \cite{Pr1};
representation theory of modular reductive Lie algebras, see
\cite{Pr1,Pr3,Pr4}; and representation theory of degenerate
cyclotomic Hecke algebras, see \cite{BK3}.

Despite the high level of recent interest, the representation theory
of finite $W$-algebras is only well-understood in certain special
cases. For $\g = \gl_n(\C)$ a thorough study of the representation
theory of $U(\g,e)$ was undertaken by Brundan and Kleshchev in
\cite{BK1,BK2}.  They obtained a classification of finite
dimensional irreducible modules along with\ numerous other results.
Several interesting consequences of this type A theory have been
found, see \cite{BB,Bru1,Bru2,BK3}.  Recent work of the first author
gives a classification of finite dimensional irreducible
$U(\g,e)$-modules for {\em rectangular nilpotent orbits} when $\g$
is of classical type, see \cite{Bro1,Bro2}.

In \cite{BGK}, a highest weight theory for representations of finite
$W$-algebras was developed.  Verma modules for $U(\g,e)$ are defined
and it is shown that any finite dimensional irreducible
$U(\g,e)$-module is isomorphic to the irreducible head of a Verma
module.  The classifications of finite dimensional irreducible
$U(\g,e)$-modules for the cases considered in \cite{BK2} and
\cite{Bro2} can be described nicely in terms of this highest weight
theory, see \cite[\S5.2]{BGK} and \cite[\S5]{Bro2}.

Let $\g$ be a classical Lie algebra.  We say a nilpotent element $e
\in \g$ is {\em even multiplicity} if all parts of the Jordan
decomposition of $e$ have even multiplicity.  In this paper we give
a classification of finite dimensional irreducible $U(\g,e)$-modules
when $e$ is even multiplicity in terms of the highest weight theory
from \cite{BGK}, see Theorem~\ref{T:main}.  Next we introduce some
notation required to state Theorem~\ref{T:submain}, which is Theorem
\ref{T:main} in the special case that all parts of the Jordan
decomposition of $e$ have the
same parity.

Let $\tilde G = \Sp_{2n}(\C)$ (or $\O_{2n}(\C)$), let $G = \tilde G
^ \circ$, and let $\g = \Lie G$, so $\g = \sp_{2n} = \sp_{2n}(\C)$
(or $\so_{2n} = \so_{2n}(\C)$). Let $V \iso \C^{2n}$ denote the
natural $\g$-module with standard basis $e_{-n},\dots,e_{-1},
e_1,\dots,e_n$, and $\g$-invariant skew-symmetric (or symmetric)
bilinear form $(.\,,.)$ defined by $(e_i, e_j) = (e_{-i},e_{-j}) =
0$ and $(e_i, e_{-j}) = \delta_{i,j}$ for $1 \leq i,j \leq n$.  Let
$\{ e_{i,j} \mid i,j = \pm 1,\dots,\pm n \}$ be the standard basis
of  $\gl_{2n} = \gl_{2n}(\C) \iso \gl(V)$.  Now $\g$ is spanned by
elements of the form $f_{i,j} = e_{i,j} + \eta_{i,j} e_{-j,-i}$ for
$i,j = \pm 1, \dots \pm n$, where $\eta_{i,j} = \sgn(i) \sgn(j)$ if
$\mf{g} = \mf{sp}_{2n}(\C)$ and $\eta_{i,j} = 1$ if $\mf{g} =
\mf{so}_{2n}(\C)$. Let $\t = \lan f_{i,i} \mid i = 1,\dots,n \ran$
be the standard Cartan subalgebra of $\g$, and let $\Phi$ and $W$ be
the root system and Weyl group of $\g$ with respect to $\t$.  We
also let $\b$ be the Borel subalgebra of $\g$ of upper triangular
matrices in $\g$, so $\b$ contains $\t$.  Let $\Phi^+ \sub \Phi$ be
the corresponding set of positive roots. We write $\t_\Z^* \sub
\t^*$ for the integral weight lattice.

Let $\bp = (p_1 \ge \dots \ge p_{2r})$ be a partition of $2n$ such
that $p_{2i-1} = p_{2i}$ for $i = 1,\dots,r$.  The {\em symmetric pyramid}
associated to $\bp$ is a diagram in the plane consisting
of $2n$ boxes of size $2 \times 2$ positioned centrally symmetric
around the origin. Symmetric pyramids were defined in \cite{EK} for all
partitions corresponding to nilpotent elements in $\g$, see also
\cite{BruG}. There are $p_1$ boxes in the middle two rows of the
symmetric pyramid associated to  $\bp$,
then $p_3$ boxes in the next two rows out, and so on.
We define the {\em coordinate pyramid} $\coord(\bp)$ by filling the boxes of
the symmetric pyramid associated to $\bp$
with the integers
$-n,\dots,-1,1,\dots,n$ from left to right and from top to bottom.
For example, for $\bp
= (4,4,2,2)$ we have
\[
\coord(\bp) =
\begin{array}{c}
\begin{picture}(80,80) \put(20,0){\line(1,0){40}}
\put(00,20){\line(1,0){80}} \put(0,40){\line(1,0){80}}
\put(00,60){\line(1,0){80}} \put(20,80){\line(1,0){40}}
\put(20,0){\line(0,1){80}} \put(40,0){\line(0,1){80}}
\put(60,0){\line(0,1){80}} \put(0,20){\line(0,1){40}}
\put(80,20){\line(0,1){40}} \put(28,70){\makebox(0,0){{-6}}}
\put(48,70){\makebox(0,0){{-5}}} \put(8,50){\makebox(0,0){{-4}}}
\put(28,50){\makebox(0,0){{-3}}} \put(48,50){\makebox(0,0){{-2}}}
\put(68,50){\makebox(0,0){{-1}}} \put(10,30){\makebox(0,0){{1}}}
\put(30,30){\makebox(0,0){{2}}} \put(50,30){\makebox(0,0){{3}}}
\put(70,30){\makebox(0,0){{4}}} \put(30,10){\makebox(0,0){{5}}}
\put(50,10){\makebox(0,0){{6}}}
\put(40,40){\circle*{3}}
\end{picture}
\end{array}
.
\]

>From $\coord(\bp)$ we may define the nilpotent element $e = \sum
f_{i,j}$, where the sum is over positive $i$ for which the box
containing $i$ is the left neighbour of the box containing $j$. Then
the Jordan type of $e$ is $\bp$, so $e$ is an even multiplicity
nilpotent. In the above example, $e = f_{1,2} + f_{2,3} + f_{3,4} +
f_{5,6}$.

{\em For the remainder of the introduction we assume
that all parts of $\bp$ have the same parity,
though this condition is not necessary for Theorem~\ref{T:main}.}

Given $i \in \{\pm 1, \dots, \pm n\}$, let $\col(i)$ be the column
of $i$, i.e the $x$-coordinate of the centre of the box labelled by
$i$.
We define
$\row(i)$ analogously (however we use a different meaning
for $\row(i)$ in Sections \ref{S:comb} and \ref{S:hwclass}). We
define
$$
\p = \lan f_{i,j} \mid \col(i) \le \col(j) \ran, \qquad \m = \lan
f_{i,j} \mid \col(i) > \col(j) \ran,
$$
$$
\h = \lan f_{i,j} \mid \col(i) = \col(j) \ran, \qquad \g_0 = \lan
f_{i,j} \mid \row(i) = \row(j) \ran.
$$
Then $\p$  is a parabolic subalgebra of $\g$ with Levi subalgebra
$\h$, and $\m$ is the nilradical of the opposite parabolic to $\p$.
Also $\g_0$ is a minimal Levi subalgebra of $\g$ containing $e$, and
$e$ is a regular nilpotent element of $\g_0$.
We write $\Phi_0$ and $W_0$ for the root system and Weyl group of
$\g_0$ with respect to $\t$, and let $\Phi_0^+ = \Phi_0 \cap
\Phi^+$.

We define $\m_\chi = \{x - \chi(x) \mid x \in \m\} \subseteq
U(\g)$, where $\chi \in \g^*$ is dual to $e$ via the trace form, and
let $Q_\chi$ be the $U(\g)$-module $U(\g)/U(\g)\m_\chi$. We note
that $Q_\chi$ is isomorphic to the induced module $U(\g)
\otimes_{U(\m)} \C_\chi$, where $\C_\chi$ is the $1$-dimensional
$U(\m)$-module given by $\chi$. The {\em finite $W$-algebra}
associated to $e$ is defined to be
$$
U(\g,e) = \End_{U(\g)}(Q_\chi)^{\op}.
$$
By the PBW theorem we have $Q_\chi \iso U(\p)$ as vector spaces,
then by a Frobenius reciprocity argument $U(\g,e)$ is isomorphic to
the subalgebra of $U(\p)$ of twisted $\m$-invariants:
$$
U(\g,e) \iso \{u \in U(\p) \mid [x,u] \in U(\g)\m_\chi \text{ for all } x \in
\m\}.
$$

By \cite[Theorem 4.3 and Lemma 5.1]{BGK}, there is a certain
subquotient of $U(\g,e)$ isomorphic to $S(\t)^{W_0}$, see also Theorem~\ref{T:Levi} and \S\ref{ss:stanlevi}.
Verma modules for $U(\g,e)$ are obtained by ``inducing''
irreducible $S(\t)^{W_0}$-modules, see \cite[\S4.2]{BGK} or \S\ref{ss:recaphw}. The finite dimensional
irreducible $S(\t)^{W_0}$-modules are given by the set $\t^*/W_0$ of $W_0$-orbits
in $\t^*$.  Given
$\Lambda \in \t^*/W_0$, we write $M(\Lambda)$ for the Verma module
corresponding to $\Lambda$.  By \cite[Theorem 4.5]{BGK}, $M(\Lambda)$
has an irreducible head, denoted $L(\Lambda)$, and any finite
dimensional irreducible $U(\g,e)$-module is isomorphic to
$L(\Lambda)$ for some $\Lambda \in \t^*/W_0$.  We note that in our
labelling we have built in ``shifts'', which we do not mention here.
These shifts are given in Section~\ref{S:levi}, where we review
highest weight theory.

We use
$\Pyr(\bp)$
to denote the set of skew-symmetric fillings of the boxes
of the symmetric pyramid associated to $\bp$ by elements of $\C$.
We identify $A \in \Pyr(\bp)$ with a weight
$\lambda_A \in \t^*$
by setting
$\lambda_A = \sum_{i=1}^n a_i \eps_i$,
where $a_i$ fills the box in $A$ occupied by
$i$ in $\coord(\bp)$
and
$\epsilon_i
= f_{-i,-i}^*$.
For example if
$$
A=
\begin{array}{c}
\begin{picture}(80,80) \put(20,0){\line(1,0){40}}
\put(00,20){\line(1,0){80}} \put(0,40){\line(1,0){80}}
\put(00,60){\line(1,0){80}} \put(20,80){\line(1,0){40}}
\put(20,0){\line(0,1){80}} \put(40,0){\line(0,1){80}}
\put(60,0){\line(0,1){80}} \put(0,20){\line(0,1){40}}
\put(80,20){\line(0,1){40}} \put(30,70){\makebox(0,0){{2}}}
\put(50,70){\makebox(0,0){{4}}} \put(8,50){\makebox(0,0){{-5}}}
\put(28,50){\makebox(0,0){{-1}}} \put(50,50){\makebox(0,0){{3}}}
\put(70,50){\makebox(0,0){{6}}} \put(8,30){\makebox(0,0){{-6}}}
\put(28,30){\makebox(0,0){{-3}}} \put(50,30){\makebox(0,0){{1}}}
\put(70,30){\makebox(0,0){{5}}} \put(28,10){\makebox(0,0){{-4}}}
\put(48,10){\makebox(0,0){{-2}}}
\put(40,40){\circle*{3}}
\end{picture}
\end{array},
$$
then
$\lambda_A = 2\epsilon_6 + 4\epsilon_5
-5\epsilon_4-\epsilon_3+3\epsilon_2+6\epsilon_1$.

The $W_0$-orbit $\Lambda_A$ of $\lambda_A$ identifies with the row
equivalence class of $A$; we denote this row
equivalence class by $\bar A$.  We restrict attention to
$\{A \in \Pyr(\bp) \mid \lambda_A \in \t_\Z^* \}$ in this paper, which amounts to the
coefficients of $\lambda_A$ all lying in $\Z$ if $\g = \sp_{2n}$,
or either all lying in $\Z$ or all lying in $\Z + \frac{1}{2}$ if
$\g = \so_{2n}$.

We recall that the restriction of the projection of $U(\g)$ onto
$Q_\chi$ restricts to an isomorphism from the centre $Z(\g)$ of
$U(\g)$ onto the centre of $U(\g,e)$, see the footnote to
\cite[Question 5.1]{Pr2}.  This allows us to view central characters
of $U(\g,e)$-modules as homomorphisms $Z(\g) \to \C$, as explained at
the end of \S\ref{ss:basprop}. The Harish Chandra homomorphism gives an
isomorphism $Z(\g) \isoto S(\t)^W$ and this meshes well with the
subquotient of $U(\g,e)$ isomorphic to $S(\t)^{W_0}$, see
\cite[Theorem 4.7]{BGK} or \eqref{e:centre}. In particular, this means that $L(\Lambda)$
has the same central character as $L(\Lambda')$ if and only if
$\Lambda$ and $\Lambda'$ are contained in the same $W$-orbit in
$\t^*$. In terms of the pyramids this translates to the multisets of
entries of $A$ and $A'$
being equal,
where $A, A' \in \Pyr(\bp)$ are such that $\Lambda_A = \Lambda$ and $\Lambda_{A'} = \Lambda'$.

The last ingredient needed for the statement of Theorem~\ref{T:main}
is the component group $\tilde C$ of the centralizer of $e$ in $\tilde G$.  It is
well-known that $\tilde C \iso \tilde H^e/(\tilde H^e)^\circ$, where $\tilde H$ is the subgroup
of $\tilde G$ corresponding to $\h$ and $\tilde H^e$ is the centralizer of $e$ in
$\tilde H$. One can see that the adjoint action of $\tilde H$ on $\g$ induces an
action of $\tilde H$ on $U(\g,e)$. As explained in the introduction to
\cite{Lo2}, this induces an action of $\tilde C$ on the isomorphism classes
of finite dimensional irreducible $U(\g,e)$-modules.

In \S\ref{ss:comp} we define an action of $\tilde C$ on a certain subset
of
$\Pyr^\leq(\bp)$, where $\Pyr^\leq(\bp)$ is the subset
of $\Pyr(\bp)$ consisting of tables with weakly increasing rows.
To be more specific let
$\Pyr^\c(\bp)$ denote the subset of $\Pyr^\leq(\bp)$
consisting of elements which correspond to integral weights
and are row equivalent to column strict.
By column strict we mean that
all of the columns are strictly decreasing.
In type D we also call
elements of $\Pyr(\bp)$
column
strict
if all of their columns are strictly decreasing, or
if their columns are strictly decreasing everywhere, except the two middle
boxes in the middle column (if it exists)
contain 0.
Now the set on which the $\tilde C$-action is defined
is $\tilde C \cdot \Pyr^\c(\bp)$.
In the future work \cite{BroG}
it will be shown that this action corresponds
to the action of $\tilde C$ on the isomorphism classes of finite
dimensional irreducible $U(\g,e)$-modules.

We are now in a position to state the main theorem of this paper in
the case that all parts of $\bp$ have the same parity.  The more
general statement is given in Theorem~\ref{T:main}.

\begin{Theorem} \label{T:submain}
Let $\g = \sp_{2n}$ or $\so_{2n}$, let $\bp$ be as above, let $e \in \g$ be an even
multiplicity nilpotent such that the Jordan type of $e$ is $\bp$,
and let $A \in \Pyr^\leq(\bp)$ be such that
$\Lambda_A \in \t_\Z^*/W_0$. Then the
irreducible $U(\g,e)$-module $L(\Lambda_A)$ is finite dimensional if
and only if there exists
$c \in \tilde C$
such that $c \cdot A \in \Pyr^\c(\bp)$.
\end{Theorem}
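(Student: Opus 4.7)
The plan is to follow the strategy of \cite{BK2} and \cite{Bro2}: split the biconditional into a construction step (sufficiency) and a counting argument (necessity), with Losev's bijection \cite[Theorem 1.2.2]{Lo2} between primitive ideals of $U(\g)$ with associated variety $\bar{\mathcal O}_e$ and $\tilde C$-orbits of finite dimensional irreducible $U(\g,e)$-modules serving as the essential bridge to the representation theory of $U(\g)$.

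For sufficiency, given $A \in \Pyr^\c(\bp)$, I want to produce a finite dimensional quotient of the Verma module $M(\Lambda_A)$, thereby forcing its irreducible head $L(\Lambda_A)$ to be finite dimensional. The even multiplicity hypothesis with all parts of $\bp$ of the same parity is decisive here: it makes $\g_0$ essentially a product of general linear Levi factors (plus, in type D with $0$'s in the two central boxes of the middle column, a single orthogonal factor). This lets me import the construction of finite dimensional irreducibles in type A from \cite{BK2} into the highest weight machinery of \cite{BGK}. A general $A \in \tilde C \cdot \Pyr^\c(\bp)$ is then handled by $\tilde C$-equivariance.

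For necessity, fix an integral central character $\chi$ of $U(\g)$. By \cite[Theorem 4.7]{BGK} and the Harish-Chandra isomorphism, the central character of $L(\Lambda_A)$ depends only on the multiset of entries of $A$. On the combinatorial side I enumerate the $\tilde C$-orbits on $\{A \in \Pyr^\c(\bp) : \Lambda_A \in \t_\Z^*/W_0 \text{ and the entries of } A \text{ match } \chi\}$. On the representation theoretic side I count the primitive ideals of $U(\g)$ with central character $\chi$ and associated variety $\bar{\mathcal O}_e$ using classical results of Joseph, Barbasch--Vogan and Lusztig together with the even multiplicity structure of $\bp$. Agreement of these two counts, combined with Losev's bijection and the sufficiency step, forces every finite dimensional irreducible $U(\g,e)$-module with integral central character to be of the form $L(\Lambda_A)$ for some $A \in \tilde C \cdot \Pyr^\c(\bp)$, completing the only-if direction.

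The main obstacle I anticipate is the sufficiency step: producing finite dimensional quotients of Verma modules for $U(\g,e)$ requires working with relations of $U(\g,e)$ beyond the built-in subquotient $S(\t)^{W_0}$, and reducing the classical Levi $\g_0$ situation to the type A theorem of \cite{BK2} through the highest weight theory of \cite{BGK} is not purely formal. Secondary obstacles are the type D case with $0$'s in the central boxes of the middle column, which demands a direct orthogonal construction, and the case of half-integral weights for $\so_{2n}$, which introduces extra bookkeeping in both the construction and the counting steps.
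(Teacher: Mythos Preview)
Your strategy is genuinely different from the paper's, and the necessity half has a real gap.

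The paper does not argue by counting. Both directions go through Corollary~\ref{C:BGKconj}, which says $L(\Lambda_A)$ is finite dimensional if and only if $\cVA(\Ann_{U(\g)} L(\lambda_A)) = \bar{G\cdot e}$. The Barbasch--Vogan algorithm (Corollary~\ref{C:BV2}) computes this associated variety from $\part(\RS(\word(A)))$, and the key combinatorial Theorem~\ref{recs} says that for a convex frame, $\part(\RS(A)) = \part(F)$ if and only if $A$ is row equivalent to column strict. Sufficiency is then immediate. Necessity is an induction on the number $r$ of row pairs: Levi restriction (Corollary~\ref{C:inductsl}) handles the inner $2r-2$ rows, and a direct analysis of symbol contents forces $\part(\RS(A))$ to be either $\bp$ or $(p_1^2,\dots,p_{r-1}^2,p_r+1,p_r-1)$; in the second case one checks by hand that applying the combinatorial operator $c_d$ brings $A$ back to the first case.

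Your counting argument needs the identification of the combinatorial $\tilde C$-operators on $\Pyr^+_\phi(\bp)$ with the representation-theoretic $\tilde C$-action on isomorphism classes, and the paper explicitly defers that to \cite{BroG} (see the remark in \S\ref{ss:comp} just before the $c_k$ are defined; indeed even the fact that the $c_k$ generate a genuine group action is postponed). Without it, matching $|\Pyr^\c_\chi|$ to the number of primitive ideals in $\Prim_{e,\chi}$ gives you neither injectivity of $A \mapsto (\Ann_{U(\g,e)} L(\Lambda_A))^\dagger$ on $\Pyr^\c_\chi$, nor the conclusion that every module in a given $C$-orbit is of the form $L(\Lambda_B)$ with $B \in \tilde C\cdot\Pyr^\c$. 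At best you would learn that each $C$-orbit of finite dimensional irreducibles meets $\{L(\Lambda_A): A\in\Pyr^\c\}$, which is strictly weaker than the theorem. The paper's inductive argument sidesteps this entirely: it works purely with the constraints finite dimensionality places on $\RS(A)$, and the only module-level fact about the $\tilde C$-operators it invokes is the rectangular two-row case from \cite{Bro2} (via \eqref{EQ:clact} and Lemma~\ref{L:cact1}).

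Your sufficiency sketch is also different in flavour. The paper never builds finite dimensional quotients of Verma modules; finite dimensionality is read off from the associated variety. A Miura-map construction of the kind you allude to does appear, but only as Corollary~\ref{C:evencase}, \emph{after} Theorem~\ref{T:main} is proved and invoking it (and \cite{BroG}) for the non-column-strict case. The column-strict half of that corollary is self-contained and could serve as your sufficiency argument, but it proceeds via the Miura map into $U(\h)$, not by transporting the type A classification of \cite{BK2} through the Levi machinery.
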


We remark here that the restriction to integral weights is necessary
for the theorem to hold, see Remark~\ref{R:wrong}.  We hope to
address the non-integral case in future work.

Through the correspondence of finite dimensional irreducible
$U(\g,e)$-modules and primitive ideals of $U(\g)$ with associated
variety $\bar {G \cdot e}$ discussed above, we obtain
the following corollary.
We limit this corollary to the type C case,
since the type D case is more complicated.
We also obtain a more general corollary from Theorem~\ref{T:main}
which includes the type D case,
see Corollary~\ref{C:prim}.
For the statement, let $\rho \in \t^*$ be the half
sum of the roots in $\Phi^+$.  Given $\lambda \in \t^*$ we write
$L(\lambda)$ for the simple highest weight $U(\g)$-module with
highest weight $\lambda - \rho$.

\begin{Corollary} \label{C:subprim}
    Let $\mf{g}=\mf{sp}_{2n}$, and let $\bp$, $e$ be as in Theorem~\ref{T:submain}.
Then
\[
\{\Ann_{U(\g)} L(\lambda_A) \mid A \in \Pyr^\c(\bp)
\}
\]
is a complete set of pairwise distinct
primitive ideals of $U(\g)$ with integral central character
and associated variety $\bar{G \cdot e}$.
\end{Corollary}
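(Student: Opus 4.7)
The plan is to deduce the corollary by combining Theorem~\ref{T:submain} with Losev's parametrization of primitive ideals. By \cite[Theorem 1.2.2]{Lo2}, the primitive ideals of $U(\g)$ with associated variety $\bar{G\cdot e}$ correspond bijectively to $\tilde C$-orbits on (isomorphism classes of) finite dimensional irreducible $U(\g,e)$-modules, and this bijection respects central characters. So restricting both sides to integral central character reduces the corollary to describing $\tilde C$-orbits on the finite dimensional irreducibles $L(\Lambda)$ with integral central character. By Theorem~\ref{T:submain} these are parametrized, via $A \mapsto L(\Lambda_A)$, by the set $\tilde C \cdot \Pyr^\c(\bp) \cap \Pyr^\leq(\bp)$.

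I would then argue that in type C each $\tilde C$-orbit on this set contains exactly one element of $\Pyr^\c(\bp)$, so that $\Pyr^\c(\bp)$ itself indexes the $\tilde C$-orbits. This should be a direct combinatorial check using the description of the $\tilde C$-action from \S\ref{ss:comp}: since $\tilde G = \Sp_{2n}$ is connected, $\tilde C$ is the usual component group of the centralizer of $e$, and I expect its generators to either fix a column-strict pyramid or carry it outside $\Pyr^\c(\bp)$, so that no two distinct column-strict pyramids can share an orbit. The type D case is more delicate because $\tilde G / G = \O_{2n}/\SO_{2n}$ contributes an extra generator and the convention for column-strictness in the middle column is looser; this is why Corollary~\ref{C:prim} takes a different form there.

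The remaining step is to identify the primitive ideal assigned to the $U(\g,e)$-module $L(\Lambda_A)$ under Losev's bijection with $\Ann_{U(\g)} L(\lambda_A)$. I would approach this via the highest weight theory of \cite{BGK}: the $U(\g,e)$-Verma $M(\Lambda_A)$, after induction through $Q_\chi \otimes_{U(\g,e)} (-)$, is related to the parabolic $U(\g)$-Verma of highest weight $\lambda_A - \rho$, and passing to irreducible heads yields a surjection onto the highest weight $U(\g)$-module $L(\lambda_A)$. The resulting equality of annihilators, combined with the preceding two steps, gives both completeness and pairwise distinctness of $\{\Ann_{U(\g)} L(\lambda_A) : A \in \Pyr^\c(\bp)\}$ as claimed.

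The main obstacle is this last step: establishing that the BGK labels for finite dimensional irreducible $U(\g,e)$-modules match Losev's parametrization of primitive ideals exactly as stated, with the $\rho$-shift built into the label $\lambda_A$ correctly accounted for. Once this compatibility is in place, the three steps together yield the corollary without further input.
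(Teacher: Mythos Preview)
Your overall three-step structure is correct and is essentially what the paper has in mind when it says the corollary is ``immediate from Theorem~\ref{T:main} and the map $\cdot^\dagger$''. The paper gives no more detail than that, so you are filling in exactly the outline the authors leave implicit. Two of your steps, however, need correction.

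For Step 3 your proposed mechanism is wrong. Skryabin's functor $Q_\chi \otimes_{U(\g,e)}(-)$ lands in the category of Whittaker modules, not in highest weight category, and there is no ``surjection onto the highest weight $U(\g)$-module $L(\lambda_A)$'' of the kind you describe. The identification $(\Ann_{U(\g,e)} L(\Lambda_A))^\dagger = \Ann_{U(\g)} L(\lambda_A)$ instead comes from Losev's equivalence of categories $\cO(e,\q)\simeq\cO(\chi,\q)$ in \cite[Theorem 4.1]{Lo3}, together with the paper's Proposition~\ref{P:sameiso} (which ensures the labels agree), and the classical fact that the irreducible Whittaker module with label $\Lambda_A$ in $\cO(\chi,\q)$ has annihilator $\Ann_{U(\g)} L(\lambda_A)$. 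This is precisely the machinery behind Corollary~\ref{C:BGKconj}; your correctly-identified ``main obstacle'' is resolved there, not by a direct computation with $Q_\chi$.

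For Step 2, your ``direct combinatorial check using the description of the $\tilde C$-action from \S\ref{ss:comp}'' does not quite stand on its own. The $\tilde C$-action relevant to Losev's bijection is the \emph{algebraic} action on isomorphism classes of irreducible $U(\g,e)$-modules, whereas the action in \S\ref{ss:comp} is a combinatorial operation on s-tables. The paper explicitly defers to \cite{BroG} the verification that these agree (see the Remark preceding the definition of the $c_k$-action). Without that bridge, knowing that the combinatorial $c_k$ carries a column-strict pyramid outside $\Pyr^\c(\bp)$ (which does follow from Lemma~\ref{L:sharpdef} and Theorem~\ref{recs}) tells you nothing about the fibres of $\cdot^\dagger$. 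So both the ``pairwise distinct'' and ``complete'' claims ultimately rest on this compatibility, and your write-up should flag that dependence rather than present Step~2 as a self-contained combinatorial fact.
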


We now give an outline of the contents of this paper, and point out
the most significant results. In Section~\ref{S:prelim} we review
some basic facts about finite $W$-algebras. In Section~\ref{S:levi}
we review the highest weight theory for finite $W$-algebras
introduced in \cite{BGK} and generalize it to ``Levi subalgebras''
of finite $W$-algebras; we note that some of the results here are
also contained in \cite{Lo3}.  An important result is
Proposition~\ref{P:induct}, which gives an inductive approach to
determining finite dimensional irreducible modules for finite
$W$-algebras.  In \cite{Lo3} Losev proved \cite[Conjecture
5.2]{BGK}, but used a potentially different isomorphism at a key
technical point.  In Proposition \ref{P:sameiso}, we show that these
isomorphisms are the same, this completes the verification of
\cite[Conjecture 5.2]{BGK}.
Also in \S\ref{ss:hwcomp},
we discuss how the action of the component group $\tilde C$
interacts with highest weight theory. In Section~\ref{S:comb} we
prove a variety of combinatorial results about
 generalizations of tableaux and pyramids called tables and s-tables.
The key result is Theorem~\ref{recs}, which relates a table being row equivalent
to column strict to the output of the Robinson--Schensted algorithm applied
to the word of the table.
In Section~\ref{S:hwclass} we prove Theorem~\ref{T:main}.  The key ingredients
are Corollary~\ref{C:BGKconj}, Theorem~\ref{recs} and the algorithm
of Barbasch and Vogan for calculating the associated variety of a primitive ideal
in the universal enveloping algebra of a classical Lie algebra.

\subsection*{Acknowledgments}  This research is funded by EPSRC grant EP/G020809/1.

\section{Preliminaries} \label{S:prelim}

In this section we define the finite $W$-algebra $U(\g,e)$
associated to a nilpotent element $e$ in a reductive Lie algebra
$\g$.  Then we recall some basic properties of $U(\g,e)$ that we
require later in the paper.  The definition we give here is the
definition ``via non-linear Lie algebras'' from \cite[\S2.2]{BGK},
which is essentially the same as the Whittaker model definition
given in \cite{Pr1}, see \cite[\S2.4]{Pr2} and \cite[Theorem
2.4]{BGK}.

\subsection{Notation and definition of $U(\g,e)$} \label{ss:Wdef}

Let $\g$ be the Lie algebra of a reductive algebraic group
$\tilde G$ over $\C$, and let $G = \tilde G ^ \circ$. Let $(\cdot|\cdot)$ be a non-degenerate symmetric
invariant bilinear form on $\g$.  For $x \in \g$ and a subspace $\a$
of $\g$, we write $\a^x = \{y \in \a \mid [y,x] = 0 \}$ for the
centralizer of $x$ in $\a$; for a subgroup $A$ of $\tilde G$ we write $A^x$
for the centralizer of $x$ in $A$.

Let $\t$ be a maximal toral subalgebra of $\g$.
We write $\Phi \sub \t^*$ for the root system of $\g$ with respect
to $\t$.
The usual pairing between
$\t^*$ and $\t$ is denoted by $\lan \cdot, \cdot \ran$.
We note that $(\cdot|\cdot)$ induces a nondegenerate symmetric form on both
$\t$ and $\t^*$.
For $\alpha \in \Phi$ we write $\alpha^\vee$
for the corresponding coroot.

Let $e \in \g$ be a nilpotent element, and define the linear map
$\chi:\g \rightarrow \C$ by $\chi(x) = (e | x)$. By the
Jacobson--Morozov theorem, we can find $h, f \in \g$ so that
$(e,h,f)$ is an $\sl_2$-triple in $\g$.
Let
$$
\g = \bigoplus_{j \in \Z} \g(j)
$$
be the $\ad h$-eigenspace decomposition, i.e.\ $\g(j) = \{x \in \g
\mid [h,x] = jx\}$.

We define the following subspaces of $\g$
$$
\p = \bigoplus_{j \geq 0} \g(j),
\qquad\n = \bigoplus_{j < 0} \g(j),
\quad\m = \bigoplus_{j \leq -2} \g(j),
\qquad\h = \g(0), \qquad\k =
\g(-1).
$$
In particular, $\p$ is a parabolic subalgebra of $\g$ with Levi
factor $\h$ and $\n$ is the nilradical of the opposite parabolic. We
let $b_1,\dots,b_r$ be a homogeneous basis for $\n$ such that $b_i
    \in \g(-d_i)$ and has weight $\beta_i \in \Phi$, where $d_i \in \Z_{>0}$.

To formulate the definition of $U(\g,e)$ we use an easy special case
of the notion of a non-linear Lie superalgebra from \cite[Definition
3.1]{DK}, where the grading is concentrated in degree zero, see
\cite[\S2.2]{BGK} for the definition of nonlinear Lie algebras in this case.

We define a symplectic form $\lan \cdot|\cdot \ran$ on $\k$ by $\lan x|y \ran =
\chi([y,x])$.
Let $ \k^{\ne} = \{x^{\ne}\mid x \in \k\}$ be a ``neutral'' copy of
$\k$. We write $x^{\ne} = x(-1)^\ne$ for any element $x \in
\g$. Now make $\k^{\ne}$ into a non-linear Lie
algebra with non-linear Lie bracket defined by $ [x^\ne,y^\ne] =
\lan x|y \ran$ for $x, y \in \k$.   Note that  $U(\k^\ne)$ is
isomorphic to the Weyl algebra associated to $\k$ and the form $\lan \cdot|\cdot \ran$.
We view
$ \tilde{\g} = \g \oplus \k^{\ne} $ as a non-linear Lie
algebra with bracket obtained by extending the brackets already
defined on $\g$ and $\k^\ne$ to all of $\tilde{\g}$, and declaring
$[x, y^{\ne}] = 0$ for $x \in \g, y \in \k$. Then $U(\tilde{\g})\iso
U(\g) \otimes U(\k^{\ne})$. Also let $\tilde{\p} = \p \oplus
\k^{\ne}$; this is a subalgebra of $\tilde \g$ whose universal
enveloping algebra is identified with $U(\p) \otimes U(\k^{\ne})$.

We define $\tilde \n_\chi = \{x - x^\ne - \chi(x) \mid x \in \n\}$.
By the PBW theorem for $U(\tilde \g)$ we have a direct sum
decomposition $U(\tilde \g) = U(\tilde \p) \oplus U(\tilde \g)\tilde
\n_\chi$. We write $\Pr : U(\tilde \g) \to U(\tilde \p)$ for the
projection along this direct sum decomposition.
We define the
{\em finite $W$-algebra}
$$
U(\g,e) = U(\tilde \p)^\n = \{u \in U(\tilde \p) \mid
\Pr([x-x^{\ne},u]) = 0 \text{ for all } x \in \n\}.
$$
It is a subalgebra of $U(\tilde \p)$ by \cite[Theorem 2.4]{BGK}.

\begin{Remark}
We note that in case the grading $\g = \bigoplus_{j \in \Z} \g(j)$
is even, meaning that $\g(j) = 0$ for odd $j$, we have $\k = 0$.  So
we do not require nonlinear Lie algebras and $U(\g,e) \sub U(\p)$; this is the case
for the definition of $U(\g,e)$ given in the introduction.
\end{Remark}

\subsection{Basic properties of $U(\g,e)$} \label{ss:basprop}

A Lie algebra homomorphism
\begin{equation} \label{e:theta}
\theta : \g^e \into U(\tilde \p)
\end{equation}
is defined in \cite[Theorem 3.3]{BGK} reformulating the definition given
in \cite[\S2.5]{Pr2}.  The restriction of $\theta$ to $\h^e$ gives an inclusion $\h^e \into U(\g,e)$, allowing us to
view $\h^e$ as a subalgebra of $U(\g,e)$.  In particular this gives
an adjoint action of $\h^e$ on $U(\g,e)$ and $U(\tilde \p)$.

We can modify $\theta$ to obtain a (non-unique) $\h^e$-equivariant linear map
\begin{equation} \label{e:Theta}
\Theta: \g^e \to U(\g,e)
\end{equation}
as in \cite[Theorem 3.6]{BGK}, which is essentially a restatement of \cite[Theorem 4.6]{Pr1}.  Then for a basis $x_1,\dots,x_r$ of $\g^e$ the set of monomials
$$
\{\Theta(x_1)^{a_1}\dots\Theta(x_r)^{a_r} \mid a_1,\dots,a_r \in \Z_{\ge 0}\}
$$
forms a PBW basis of $U(\g,e)$.

We let $\tilde C =\tilde G^e/(\tilde G^e)^\circ$ be the component group of the
centralizer $\tilde G^e$ of $e$ in $\tilde G$.  For connected $G$, it is a
standard result that $C  = G^e/(G^e)^\circ \iso H^e/(H^e)^\circ$, where $H$ is the
connected subgroup of $G$ corresponding to $\h$. This can be proved
by noting that the centralizer of $G$ must normalize $\p$, see
\cite[Proposition 5.9]{Ja}, and that any two
Levi subalgebras of $\p$ are conjugate by an element of the
unipotent radical of $P$, where $P$ is the parabolic subgroup of $G$
corresponding to $\p$.  These arguments work just as well for  $\tilde G$,
so we have $\tilde C \iso \tilde H^e/(\tilde H^e)^\circ$ in general. It is
straightforward to see that the adjoint action of $\tilde H^e$ on $\g$
gives rises to an action of $\tilde H^e$ on $U(\g,e)$.

Lastly we consider the centre of $U(\g,e)$.  The
footnote to \cite[Question 5.1]{Pr2} says that the restriction of
$\Pr$ to $Z(\g)$ gives an isomorphism $Z(\g) \isoto
Z(\g,e)$, where $Z(\g,e)$ denotes the centre of $U(\g,e)$.
Let $L$ be an irreducible $U(\g,e)$-module.  Then the centre $Z(\g,e)$ of $U(\g,e)$ acts on $L$ via a character.
We say that $L$ is of central character $\psi : Z(\g) \to \C$ if $\Pr(z)v = \psi(z)v$ for all $z \in Z(\g)$ and $v \in L$.

\iftrue
\subsection{Losev's map of ideals} \label{ss:skrylos}

In \cite{Lo2} Losev shows that there exists a map $\cdot ^\dagger$
from the set of ideals of $U(\g,e)$ to the set of ideals of $U(\g)$
such that the restriction
\begin{equation} \label{eq:dagger}
I \mapsto I^\dagger : \Prim_0 U(\mf{g},e) \twoheadrightarrow
\Prim_{e}U(\mf{g})
\end{equation}
is a surjection.
Here
$\Prim_{0}U(\mf{g},e)$
denotes the primitive ideals of
$U(\mf{g},e)$ of finite co-dimension, and
 $\Prim_{e}U(\mf{g})$
denotes the primitive ideals of $U(\mf{g})$ with associated variety
equal to $\overline{G.e}$.  For a definition of associated
varieties, see for example \cite[\S9]{Ja}.

The set
$\Prim_0 U(\g,e)$
identifies naturally with the set of isomorphism
classes of finite dimensional irreducible $U(\g,e)$-modules.
The action of $H^e$ on $U(\g,e)$ induces an action on $\Prim_0(U(\g,e)$.
The action of $\h^e$ of $U(\g,e)$ obtained from differentiating
the action of $H^e$ coincides with the adjoint action of $\h^e$ through
$\theta$, see for example \cite[Theorem 3.3(i)]{BGK}.
Therefore, the action
$H^e$ on $\Prim_0 U(\g,e)$ factors through $C$, as explained in
the introduction to \cite{Lo2}.  Putting this all together we obtain
an action of $C$ on the set of isomorphism classes of finite
dimensional $U(\g,e)$-modules.  We note that this action can also
be described in terms of ``twisting'' the action of $U(\g,e)$ on its
finite dimensional irreducible modules.

In
\cite[Theorem 1.2.2]{Lo1}
and
\cite[Theorem 1.2.2]{Lo2}
the following properties of $\cdot^\dagger$ are established:

\begin{enumerate}
\item[(i)]
    The fibers of the restriction of $\cdot ^\dagger$ in \eqref{eq:dagger}
        are precisely the $C$-orbits in $\Prim_0U(\g,e)$.
\item[(ii)]
Central characters
are preserved by
$\cdot^\dagger$
in the sense that
    if $L$ is an irreducible $U(\g,e)$-module with
    central character $\psi : Z(\g) \to \C$, then
    $(\Ann_{U(\g,e)} M)^\dagger \cap Z(\g) = \ker \psi$.
\end{enumerate}

\fi

\section{Highest weight theory and ``Levi subalgebras'' of $U(\g,e)$} \label{S:levi}

In this section we review the highest weight theory for finite
$W$-algebras from \cite{BGK}. Furthermore, we extend some of the
results from {\em loc.\ cit.\ }to define certain subquotients of
$U(\g,e)$ that play the role of Levi subalgebras; they are
isomorphic to smaller finite $W$-algebras. Such subquotients were
first used to study the representation theory of finite $W$-algebras
by Losev in \cite{Lo3}. This isomorphism is recorded in Theorem
\ref{T:Levi} and is a generalization of \cite[Theorem 4.3]{BGK}.
Using Theorem~\ref{T:Levi} we set up an inductive approach to
determining the finite dimensional irreducible modules for $U(\g,e)$
as set out in Proposition~\ref{P:induct}.  A number of the results
involved are straightforward generalizations from \cite[\S4]{BGK},
some of which are contained in \cite{Lo3}. As our setup is slightly
different to that in \cite{Lo3}, we include all statements here.

Of particular importance in this section is Corollary
\ref{C:BGKconj}, which completes the verification of \cite[Conjecture 5.2]{BGK}.
This gives a combinatorial criteria for an irreducible highest weight module for $U(\g,e)$
to be finite dimensional in the case $e$ is of standard Levi type.

\subsection{Notation for highest weight theory} \label{ss:full}
It is a standard result that a Levi factor of $\g^e$ is given by
$\h^e$. We may assume that our maximal toral subalgebra $\mf{t}$ is
contained in $\h = \g(0)$ so that $\t^e$ is a maximal toral subalgebra of
$\h^e$.  We let $\g_0 = \{x \in \g \mid [t,x] = 0 \text{ for all
} t \in \t^e\}$ be the centralizer of $\t^e$ in $\g$.  Then $\g_0$
is a Levi subalgebra of $\g$ and $e$ is a distinguished nilpotent
element of $\g_0$.   We choose a Borel subalgebra $\b_0$ of $\g_0$
contained in $\p$ and containing $\t$.
We write $\Phi_0$ for the root
system of $\g_0$ with respect to $\t$ and $\Phi_0^+$ for the
positive roots determined by $\b_0$.

We choose a parabolic subalgebra $\q$ of $\g$ with Levi factor
$\g_0$.  We write $\q_u$ for the nilradical of $\q$. Then $\b = \b_0
\oplus \q_u$ is a Borel subalgebras of $\g$. The system of positive
roots determined by $\b$ is denoted by $\Phi^+$ and we let $\rho =
\frac{1}{2}\sum_{\alpha \in \Phi^+} \alpha$.

We say a subalgebra $\r$ of $\t^e$ is  a {\em full subalgebra} if
$\r$ is equal to the centre of $\g^\r = \{x \in \g \mid [t,x] = 0
\text{ for all } t \in \r\}$. For a full subalgebra $\r$ of $\t^e$
there is an adjoint action of $\r$ on $\tilde \g$, which extends the
adjoint action of $\r$ on $\g$, created by declaring that $\r$ acts
on $\k^\ne$ by $[t,x^\ne] = [t,x]^\ne$ for $t \in \r$ and $x \in
\k$.  For an $\r$-stable subspace $\a$ of $\tilde \g$, we define
$\a^\r = \{x \in \a \mid [t,x] = 0 \text{ for all } t \in \r\}$. We
note that $\g^\r$ is a reductive Levi subalgebra of $\g$, and $e,h,f
\in \g^\r$, so we can define the finite $W$-algebra $U(\g^\r,e)$ as
in \S\ref{ss:Wdef}. We also have that $\g_0 \subseteq \g^\r$, so
$\g^\r$ is an ``intermediate'' Levi subalgebra lying over $\g_0$.

Now let $\r$ and $\s$ be full subalgebras of $\t^e$ with $\r \sub \s$.  Then we can form the $\s$-weight space decomposition
$$
\g^\r = \g^\s \oplus \bigoplus_{\alpha \in \Phi^\r_\s} \g^\r_\alpha
$$
of $\g^\r$,
where $\Phi^\r_\s \sub \s^*$ and $\g^\r_\alpha = \{x \in \g^\r \mid
[s,x] = \alpha(s)x \text{ for all } s \in \s\}$.  Then $\Phi^\r_\s$ is
a restricted root system, see \cite[\S2 and \S3]{BruG} for information
on restricted root systems.
More generally, for any subspace $\a$ of $\tilde \g^\r$ stable
under the adjoint action of $\s$ we have an
$\s$-weight space
decompositions
$$
\a= \a^\s \oplus \bigoplus_{\alpha \in \Phi^\r_\s}
\a_\alpha.
$$
We let $\q^\r_\s$ be the
parabolic subalgebra of $\g^\r$ with Levi factor $\g^\s$ and
which contains the parabolic subalgebra $\q^\r$ of $\g^\r$.  As explained in \cite[\S2]{BruG},
the parabolic subalgebra $\q^\r_\s$ gives a system $\Phi^\r_{\s,+}$ of
positive restricted roots of $\Phi^\r_\s$, namely, $\Phi^\r_{\s,+} = \{\alpha \in
\Phi^\r_\s \mid \g^\r_\alpha \sub \q^\r_\s\}$. We set $\Phi^\r_{\s,-} = -
\Phi^\r_{\s,+}$.

In much of the notation introduced above and in the next section, there are superscripts
$\r$ and subscripts $\s$.
In the case $\r = 0$, we omit this superscript, so for example we write $\q_\s$ instead of $\q^0_\s$,
and $\Phi_\s$ rather than $\Phi^0_\s$.  For the case $\s = \t^e$, we omit the subscript $\s$, so for
example we write $\q^\r$ instead of $\q^\r_{\t^e}$.  We break this convention for the restricted root systems
and write $(\Phi^\r)^e$ rather than $\Phi^\r_{\t^e}$.
Finally, in case $\r = \t^e$ we replace superscript $\r$ with subscript $0$, as in $\g_0 = \g^{\t^e}$.

We give a piece of notation that is used frequently in the remainder
of this paper. Given a character $\gamma : \a \to \C$ of a Lie
algebra $\a$, we define, the {\em shift automorphism} $S_\gamma :
U(\a) \to U(\a)$ by
\begin{equation} \label{e:shift}
S_\gamma(x) = x + \gamma(x)
\end{equation}
for each $x \in \a$.

\subsection{``Levi subalgebras''} \label{ss:levi}
Let $\r$ and $\s$ be full subalgebras of $\t^e$,
with $\r \sub \s$.

The analogue $\theta^\r$ of $\theta$ from \eqref{e:theta} gives an adjoint action of $(\h^\r)^e$ on
$U(\g^\r,e)$, which restricts to $\s$.  Therefore, we have weight space decompositions
\[
U(\tilde \p^\r) =  U(\tilde \p^\r)^\s \oplus \bigoplus_{\alpha \in \Z \Phi^\r_\s \setminus \{0\}} U(\tilde
\p^\r)_\alpha
\]
and
\[
U(\g^\r,e) = U(\g^\r,e)^\s \oplus \bigoplus_{\alpha
\in \Z \Phi^\r_\s \setminus \{0\} } U(\g^\r,e)_\alpha.
\]
The zero weight space $U(\tilde \p^\r)^\s$ is a subalgebra of $U(\tilde
\p^\r)$.  Define $U(\tilde \p^\r)_{\s,\sharp}$ to be the left ideal of
$U(\tilde \p^\r)$ generated by the root spaces $\tilde \p^\r_\alpha$
for $\alpha \in \Phi^\r_{\s,+}$.
 Then, as explained in
\cite[\S4.1]{BGK}, $U(\tilde \p^\r)^\s_\sharp = U(\tilde \p^\r)^\s \cap U(\tilde \p^\r)_{\s,\sharp}$ is a two-sided ideal of $U(\tilde
\p^\r)^\s$ so $U(\tilde \p^\r)^\s = U(\tilde \p^\s) \oplus
U(\tilde \p^\r)^\s_\sharp$.  The projection $\pi^\r_\s : U(\tilde
\p^\r)^\s \onto U(\tilde \p^\s)$ along this direct sum
decomposition induces an isomorphism $U(\tilde \p^\r)^\s / U(\tilde
\p^\r)^\s_\sharp \iso U(\tilde \p^\s)$.

Similarly, $U(\g^\r,e)^\s$ is a subalgebra of $U(\g^\r,e)$ and we
define $U(\g^\r,e)_{\s,\sharp}$ to be the left ideal of $U(\g^\r,e)$
generated by the elements $\Theta^\r(x)$ for $x \in
(\g^\r)^e_\alpha$ and $\alpha \in \Phi^\r_{\s,+}$, where $\Theta^\r$
is the analogue of the map $\Theta$ given in \eqref{e:Theta}.  Then
$U(\g^\r,e)^\s_\sharp = U(\g^\r,e)^\s \cap U(\g^\r,e)_{\s,\sharp}$
is a two sided ideal of $U(\g^\r,e)^\s$ so we can form the quotient
$U(\g^\r,e)^\s/U(\g^\r,e)^\s_\sharp$. Theorem~\ref{e:barpi} below
says that this quotient is isomorphic to $U(\g^\s,e)$.

We recall that $b_1,\dots,b_r$ is  basis of $\n$ such that $b_i \in
\g(-d_i)$ and has weight $\beta_i \in \Phi$. We let $I^\r = \{i \mid
\beta_i|_\r = 0\}$ and define
$$
\gamma^\r_\s = \sum_{\substack{i \in I^\r \\
\beta_i|_\s \in \Phi^\r_{\s,-}}}\beta_i \in \t^*.
$$
The analogue of \cite[Lemma 4.1]{BGK} says that $\gamma^\r_\s$ extends
uniquely to a character of $\p^\s$.  Therefore, we can define
the shift $S_{-\gamma^\r_\s} : U(\tilde \p^\s) \to U(\tilde
\p^\s)$ by the formula given in \eqref{e:shift} for $x \in \p^\s$,
and $S_{-\gamma^\r_\s}(y^\ne) = y^\ne$ for $y \in \k^\s$. The
following theorem can be proved using the same arguments as for
\cite[Theorem 4.3]{BGK}, which deals with the case $\r = 0$ and $\s
= \t^e$.

\begin{Theorem}\label{T:Levi}
The restriction of $S_{-\gamma^\r_\s} \circ \pi^\r_\s :
U(\tilde\p^\r)^\s \twoheadrightarrow U(\tilde \p^\s)$ defines a
surjective algebra homomorphism $U(\g^\r,e)^\s \onto U(\g^\s,e)$
with  kernel $U(\g^\r,e)^\s_\sharp$. Therefore, it induces an
isomorphism
\begin{equation} \label{e:barpi}
\bar \pi^\r_\s :
U(\g^\r,e)^\s/U(\g^\r,e)^\s_\sharp \isoto U(\g^\s,e).
\end{equation}
\end{Theorem}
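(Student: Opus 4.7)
The plan is to imitate the proof of \cite[Theorem 4.3]{BGK} in this relative setting, treating $U(\g^\r,e)$ as the ``ambient'' finite $W$-algebra and extracting the smaller one $U(\g^\s,e)$ as a subquotient. Throughout I would use the PBW basis for $U(\g^\r,e)$ given by monomials in $\Theta^\r(x)$ for $x$ running over an $\s$-weight basis of $(\g^\r)^e$, together with the weight-space decomposition of $U(\tilde\p^\r)$ and $U(\g^\r,e)$ under the adjoint action of $\s$ induced from $\theta^\r$.

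First I would establish the algebra/homomorphism part. The projection $\pi^\r_\s\colon U(\tilde\p^\r)^\s \onto U(\tilde\p^\s)$ is a homomorphism of algebras because $U(\tilde\p^\r)^\s_\sharp$ is a two-sided ideal of $U(\tilde\p^\r)^\s$, as noted just before the statement. The shift $S_{-\gamma^\r_\s}$ is a Lie algebra automorphism of $\tilde\p^\s$ (using the extension of $\gamma^\r_\s$ to a character of $\p^\s$ provided by the analogue of \cite[Lemma~4.1]{BGK}), hence an algebra automorphism of $U(\tilde\p^\s)$; so the composition $S_{-\gamma^\r_\s}\circ\pi^\r_\s$ is an algebra map on $U(\tilde\p^\r)^\s$, and thus on the subalgebra $U(\g^\r,e)^\s$.

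Next, the heart of the argument: the image lands in $U(\g^\s,e)$. For $u \in U(\g^\r,e)^\s$ and $x \in \n^\s \subseteq \n^\r$, the invariance condition for $U(\g^\r,e)$ gives $\Pr^\r([x-x^\ne,u])=0$, where $\Pr^\r$ is the analogous projection $U(\tilde\g^\r) \to U(\tilde\p^\r)$. One then pushes this forward through $\pi^\r_\s$ and checks that it becomes the $\s$-level invariance condition up to the shift $\gamma^\r_\s$; the character $\gamma^\r_\s$ appears precisely as the sum of weights of the root vectors $b_i \in \n^\r$ whose images under the projection to $\g^\s$ lie in $\n^\s$ with opposite sign, exactly as in \cite[\S4.1]{BGK}. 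This is a bookkeeping exercise tracking how the $\n^\r$-invariance and the projection $\pi^\r_\s$ interact; it is the main obstacle and will require careful case-splitting according to the $\s$-weights of the $b_i$'s.

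Once the homomorphism $\phi:=S_{-\gamma^\r_\s}\circ\pi^\r_\s\big|_{U(\g^\r,e)^\s}\to U(\g^\s,e)$ is constructed, I would identify its kernel and verify surjectivity using the PBW theorem. By construction, $\ker\phi \supseteq U(\g^\r,e)^\s_\sharp$, since elements of the latter lie in $U(\tilde\p^\r)^\s_\sharp = \ker\pi^\r_\s$. For the reverse inclusion, choose a basis of $(\g^\r)^e$ adapted to the $\s$-weight decomposition $(\g^\r)^e = (\g^\s)^e \oplus \bigoplus_{\alpha\ne 0}(\g^\r)^e_\alpha$, and order the corresponding $\Theta^\r$-monomials so that any monomial involving a factor from a nonzero $\s$-weight space lies in $U(\g^\r,e)^\s_\sharp$ (after rearranging to put $\Phi^\r_{\s,+}$-factors on the left, modulo lower-order corrections that remain in $U(\g^\r,e)^\s_\sharp$ by an induction on a suitable Kazhdan-type filtration). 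The remaining monomials in $\Theta^\r(y)$ for $y \in (\g^\s)^e$ give a PBW basis of a complement to $U(\g^\r,e)^\s_\sharp$ in $U(\g^\r,e)^\s$, and applying $\phi$ to them yields (by the analogue of \cite[Theorem~3.6]{BGK} applied inside $U(\g^\s,e)$) a family of elements mapping to a PBW basis of $U(\g^\s,e)$. This gives both surjectivity of $\phi$ and that $\ker\phi = U(\g^\r,e)^\s_\sharp$, so $\bar\pi^\r_\s$ is an isomorphism as claimed.
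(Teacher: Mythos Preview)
Your proposal is correct and follows exactly the approach the paper indicates: the paper does not give a detailed proof of this theorem, but simply states that it ``can be proved using the same arguments as for \cite[Theorem~4.3]{BGK}, which deals with the case $\r = 0$ and $\s = \t^e$,'' and your sketch is precisely an outline of that argument transported to the relative setting $\r \subseteq \s \subseteq \t^e$. One small slip: since $U(\g^\r,e)_{\s,\sharp}$ is the \emph{left} ideal generated by the $\Theta^\r(x)$ with $x$ of positive $\s$-weight, in your PBW argument the $\Phi^\r_{\s,+}$-factors should be moved to the \emph{right} (not the left) of a monomial to exhibit it as lying in $U(\g^\r,e)^\s_\sharp$; the rest of the reasoning is unaffected.
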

We note that Losev established a similar isomorphism in this setting in
\cite{Lo3}.

\subsection{Highest weight theory and ``Levi subalgebras''} \label{ss:recaphw}

In this subsection we recall some definitions and results about
highest weight theory for $U(\g,e)$ from \cite[\S4]{BGK}.  In fact,
we work in the general setting from the previous section with $\r$
and $\s$ full subalgebras of $\t^e$, whereas the case $\r = 0$ and
$\s = \t^e$ is considered in \cite{BGK}.  All the results that we
state below can be proved in exactly the same way as in \cite{BGK},
so we simply refer to results there even though we strictly mean
their analogues.

Let $V$ be a $U(\g^\s,e)$-module.  Then, as in \cite[\S4.2]{BGK}, we define
\begin{equation} \label{e:verma}
M^\r_\s(V) = M^\r_\s(V,\q^\r) = (U(\g^\r,e)/U(\g^\r,e)_{\s,\sharp})
\otimes_{U(\g^\s,e)} V,
\end{equation}
where $U(\g^\r,e)/U(\g^\r,e)_{\s,\sharp}$ is viewed as
a right $U(\g^\s,e)$-module via the isomorphism
$\bar \pi^\r_\s$ from \eqref{e:barpi}.  The formula in \eqref{e:verma} defines a functor
$M^\r_\s: U(\g^\s,e)\-mod \to U(\g^\r,e)\-mod$, which can be
viewed as an analogue of parabolic induction for modules for
reductive Lie algebras.

There is also a right adjoint functor to $M^\r_\s$, which can be viewed as an
analogue of parabolic restriction.  This functor is denoted by
$R^\r_\s : U(\g^\r,e)\-mod \to U(\g^\s,e)\-mod$ and defined by
$$
R^\r_\s(V) = \{v \in V \mid uv = 0 \text{ for all } u \in U(\g^\r,e)_{\s,\sharp}\},
$$
where the action of $U(\g^\s,e)$ is through the isomorphism in \eqref{e:barpi}.
We remark that for $v \in V \in U(\g^\r,e)\-mod$, we have $v \in R^\r_\s(V)$ if and only if
$\Theta^\r(x)v = 0$ for all $x \in (\g^\r)^e_\alpha$ with $\alpha \in \Phi^\r_{\s,+}$.
We also note that functors similar to $M_\s^\r$ and $R_\s^\r$ were
used by Losev in \cite{Lo3}.

Let $\{V_\Lambda \mid \Lambda \in \cL_\s\}$ be a parametrization of a
complete set of pairwise non-isomorphic finite dimensional
irreducible $U(\g^\s,e)$-modules.  Then for $\Lambda \in \cL_\s$, we
define the {\em parabolic Verma module} $M^\r_\s(\Lambda) = M^\r_\s(V_\Lambda)$.
By \cite[Theorem 4.5]{BGK}, $M^\r_\s(\Lambda)$ has an irreducible head
denoted $L^\r_\s(\Lambda) = L^\r_\s(\Lambda,\q)$, and any finite dimensional irreducible
$U(\g^\r,e)$-module is isomorphic to $L^\r_\s(\Lambda)$ for some $\Lambda
\in \cL_\s$. Moreover, for $\Lambda,\Lambda' \in \cL_\s$, we have that
$L^\r_\s(\Lambda) \iso L^\r_\s(\Lambda')$ if and only if $\Lambda =
\Lambda'$.  Therefore, the $L^\r_\s(\Lambda)$'s parameterized by the set
\[
\cL^{\r,+}_\s = \{\Lambda \in \cL_\s \mid L^\r_\s(\Lambda) \text{ is finite
dimensional}\}
\]
give a complete set of pairwise non-isomorphic finite dimensional
irreducible modules for $U(\g^\r,e)$.

We define
$$
\delta^\r_\s = \sum_{\substack{i \in I^\r \\
\beta_i|_\s \in \Phi_{\s,-} \\ d_i \ge 2}} \beta_i
+ {\textstyle\frac{1}{2}}\sum_{\substack{i \in I^\r\\
\beta_i|_\s \in \Phi_{\s,-}
\\
d_i =1}} \beta_i \in \t^*,
$$
and $\delta^\r = \delta^\r_{\t^e}$.  We incorporate a shift by $\delta^\r$
into the labelling of the weight spaces, so for a
$U(\g^\r,e)$-module $M$ and $\lambda \in \s^*$, we define the
$\lambda$-weight space $M_\lambda = \{m \in M \mid \theta^\r(s)m = (\lambda(s) -
\delta^\r(s))m \text{ for all } s \in \s\}$.  The justification for this shift
is given in the next paragraph, and is based on the following commutative diagram, which is a
consequence of \cite[Lemma 4.2]{BGK}:
\[
\begin{array}{c}
\begin{CD}
\t^e & @>\theta^\r>> & U(\g^\r,e)^\s\\
@VS_{-\delta^\r_\s}VV & & @VV{\bar\pi^\r_\s}V \\
\t^e & @>\theta^\s>> & U(\g^\s,e)
\end{CD}
\end{array}.
\]

The system of positive roots $\Phi_{\s,+}$ allows us to define a
dominance ordering on $\s^*$ in the usual way: for $\lambda,\mu
\in \s^*$ we say $\lambda \le \mu$ if and only if $\mu - \lambda
\in \Z_{\ge 0}\Phi_{\s,+}$.  Let $M$ be a $U(\g^\r,e)$-module.  For
$\lambda \in \s^*$, we say that $M_\lambda$ is a {\em maximal $\s$-weight space} of $M$ if $M_\mu = 0$ whenever $\mu \in \s^*$ with $\mu > \lambda$.
In this case $M_\lambda \sub R^\r_\s(M)$, so we obtain an action of $U(\g^\s,e)$ on $M_\lambda$.
The shift by
$\delta^\r$ in the labelling of the $\s$-weight spaces means that
$\theta^\s(s)v = (\lambda(s) - \delta^\s(s))v$ for all $v \in M_\lambda$,
when $M_\lambda$ is viewed as a $U(\g^\s,e)$-module.

We say
that a $U(\g^\r,e)$-module $M$ is a {\em highest $\s$-weight module} if it
is generated by a maximal $\s$-weight space $M_\lambda$ such that
$M_\lambda$ is finite dimensional and irreducible as a
$U(\g^\s,e)$-module; we say that $M$ is of type $\Lambda \in \cL_\s$ if
$M_\lambda$ isomorphic to $V_\Lambda$.
Let $M$ be a highest $\s$-weight $U(\g,e)$-module of type $\Lambda$.
Then it follows from \cite[Theorem 4.5]{BGK} that there are unique (up to scalar)
homomorphisms $M^\r_\s(V) \to M$ and $M \to L^\r_\s(V)$.

As in \cite[\S4.4]{BGK}, we define $\mathcal O^\r_\s(e;\q_\s^\r)$ to be the
category of all (finitely generated) $U(\g^\r,e)$-modules $M$ such
that:
\begin{enumerate}
\item[(i)] the action of $\s$ on $M$ is semisimple with
finite dimensional $\s$-weight spaces; and
\item[(ii)] the set $\{\lambda \in \s^* \mid M_\lambda \neq 0\}$
is contained in a finite union of sets of the form $\{\nu \in \s^*
\mid  \nu \le \mu\}$ for $\mu \in \s^*$.
\end{enumerate}
This is an analogue of a parabolic category $\cO$ for a reductive
Lie algebra.  It is easy to see that the parabolic Verma modules
$M^\r_\s(\Lambda)$ and their irreducible heads $L^\r_\s(\Lambda)$
all lie in $\cO^\r_\s(e;\q_\s^\r)$.

We finish this subsection by giving, in Proposition~\ref{P:induct},
an inductive approach to determining finite dimensional irreducible
representations of $U(\g,e)$.
For this proposition we require a transitivity property of the
parabolic induction functors $M^\r_\s$
given in the following lemma.  Recall for the statement that our notational convention
is to omit superscript $\r$ for $\r = 0$ and omit subscript $\s$ for $\s = \t^e$.

\begin{Lemma} \label{L:indcomp}
The natural multiplication map
$$
(U(\g,e)/U(\g,e)_{\s,\sharp}) \otimes_{U(\g^\s,e)}
(U(\g^\s,e)/U(\g^\s,e)_\sharp) \to U(\g,e)/U(\g,e)_\sharp
$$
gives rise to an isomorphism of functors
$$
M_\s \circ M^\s \iso M.
$$
\end{Lemma}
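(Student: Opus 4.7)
The plan is to show that the natural multiplication map in the statement is well-defined and bijective, which gives the asserted isomorphism of functors.  Well-definedness rests on two observations.  First, any defining generator $\Theta(x)$ of $U(\g,e)_{\s,\sharp}$, arising from $x \in \g^e_\alpha$ with $\alpha \in \Phi_{\s,+}$, is a linear combination of $\Theta(x')$ with $x' \in \g^e_{\alpha'}$ and $\alpha' \in \Phi^e_+$, because $\alpha'|_\s = \alpha \ne 0$ forces $\alpha' \in \Phi^e_+$ (the parabolics being set up compatibly with $\q \subseteq \q_\s$).  Hence $U(\g,e)_{\s,\sharp} \subseteq U(\g,e)_\sharp$, and the first tensor-factor kernel maps to zero.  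Second, for balancing over $U(\g^\s,e)$ one must check that some $\bar\pi_\s$-lift of a defining generator $\Theta^\s(x)$ of $U(\g^\s,e)_\sharp$ lies in $U(\g,e)_\sharp$; since $x \in (\g^\s)^e_\alpha$ with $\alpha \in (\Phi^\s)^e_+ \subseteq \Phi^e_+$, a suitable lift is $\Theta(x) \in U(\g,e)^\s \cap U(\g,e)_\sharp$ (using that $\Theta$ can be chosen $\s$-equivariantly by \cite[Theorem 3.6]{BGK}).

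Surjectivity follows from decomposing the generators of $U(\g,e)_\sharp$: for $x \in \g^e_\alpha$ with $\alpha \in \Phi^e_+$, either $\alpha|_\s \in \Phi_{\s,+}$ (so $\Theta(x) \in U(\g,e)_{\s,\sharp}$ and vanishes in the quotient) or $\alpha|_\s = 0$ (so $\Theta(x) \in U(\g,e)^\s$ and $\bar\pi_\s$ projects it onto a defining generator of $U(\g^\s,e)_\sharp$).  Therefore, modulo $U(\g,e)_{\s,\sharp}$, the ideal $U(\g,e)_\sharp$ is the left ideal generated by $\bar\pi_\s$-lifts of $U(\g^\s,e)_\sharp$, which is precisely the image under the multiplication map of $(U(\g,e)/U(\g,e)_{\s,\sharp}) \otimes_{U(\g^\s,e)} U(\g^\s,e)_\sharp$.

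For injectivity I would invoke the PBW theorem \cite[Theorem 3.6]{BGK} for $U(\g,e)$, using an ordered basis $x_1, \dots, x_r$ of $\g^e$ adapted to the $\s$-weight decomposition: first a basis of the negative $\s$-weight spaces of $\g^e$, then a basis of $(\g^\s)^e$, and finally a basis of the positive $\s$-weight spaces.  A weight-and-filtration straightening argument, modelled on the proof of \cite[Theorem 4.3]{BGK}, identifies $U(\g,e)/U(\g,e)_{\s,\sharp}$ with the free right $U(\g^\s,e)$-module (via $\bar\pi_\s$) on the images of the ordered monomials in the $\Theta(x_i)$ of negative $\s$-weight.  Combining this with the analogous PBW basis of $U(\g^\s,e)/U(\g^\s,e)_\sharp$, built from those $x_i \in (\g^\s)^e$ of non-positive $\t^e$-weight, produces a basis of the source of the multiplication map, and the map sends this bijectively onto the PBW basis of $U(\g,e)/U(\g,e)_\sharp$ formed from all $x_i$ of non-positive $\t^e$-weight.

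The main obstacle is the straightening step: one must show that every PBW monomial in the $\Theta(x_i)$ containing a factor of positive $\s$-weight is congruent modulo $U(\g,e)_{\s,\sharp}$ to a linear combination of monomials without such a factor.  This is handled by a double induction on the PBW filtration degree and on the total positive $\s$-weight of the monomial, exploiting that commutators $[\Theta(x_i),\Theta(x_j)]$ preserve the sum of $\s$-weights and strictly lower the PBW filtration degree.  This mirrors the calculation done in the $\s = \t^e$ case in the proof of \cite[Theorem 4.3]{BGK}.
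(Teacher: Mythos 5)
Your proposal carries out, in considerable detail, the ``straightforward'' verification the paper omits --- well-definedness via inclusions of $\sharp$-ideals and bijectivity via a PBW straightening argument --- but it misses the one ingredient the paper explicitly flags as the key point: the decomposition $\gamma = \gamma_\s + \gamma^\s$ of the shift characters and the resulting factorization $\bar\pi = \bar\pi^\s \circ \bar\pi_\s$. This is not a cosmetic omission. Both the source and the target of your multiplication map carry right $U(\g_0,e)$-module structures: on $(U(\g,e)/U(\g,e)_{\s,\sharp}) \otimes_{U(\g^\s,e)} (U(\g^\s,e)/U(\g^\s,e)_\sharp)$ the right action comes through $\bar\pi^\s$ after transporting along $\bar\pi_\s$, while on $U(\g,e)/U(\g,e)_\sharp$ it comes through $\bar\pi$. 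For the multiplication map to be a $(U(\g,e),U(\g_0,e))$-bimodule homomorphism --- which is exactly what one needs to obtain a natural transformation $M_\s \circ M^\s \to M$ rather than a bare family of isomorphisms of vector spaces --- these two right actions must agree, and this is precisely the content of $\bar\pi = \bar\pi^\s \circ \bar\pi_\s$. Your argument establishes a bijection of left $U(\g,e)$-modules but never checks right $U(\g_0,e)$-linearity, so as written it does not yield an isomorphism of functors.

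The same issue is lurking in your balancing step: you take as given that $\Theta(x)$, for $x \in (\g^\s)^e_\alpha$, is a $\bar\pi_\s$-lift of $\Theta^\s(x)$. But $\bar\pi_\s$ is not the plain projection $\pi_\s$; it includes the shift automorphism $S_{-\gamma_\s}$, so that identity is itself bound up with the behaviour of the shift characters and should not be asserted without comment. The fix is short: partition the index set $\{1,\dots,r\}$ according to whether $\beta_i|_\s \in \Phi_{\s,-}$ or $\beta_i|_\s = 0$ (using the compatibility $\q \subseteq \q_\s$, which you already invoke), observe that in the latter case $b_i \in \g^\s$ and $\beta_i|_{\t^e} \in (\Phi^\s)^e_-$, and conclude $\gamma = \gamma_\s + \gamma^\s$, hence $\bar\pi = \bar\pi^\s \circ \bar\pi_\s$. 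With that in hand, the bimodule compatibility and your balancing claim both become transparent, and your PBW argument (which mirrors the proof of \cite[Theorem 4.3]{BGK}) completes the proof along the lines the paper intends.
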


\begin{proof}
The argument required is straightforward, so we omit the details.  The key
point is that $\gamma =
\gamma_\s + \gamma^\s$, which means that $\bar \pi = \bar
\pi^\s \circ \bar \pi_\s$.
\end{proof}

By our labelling convention, $\cL$ denotes a parametrization of
the set of
isomorphism classes of
finite dimensional irreducible $U(\mf{g}_0,e)$-modules.
The following proposition says that $\cL^+ \sub \cL^{\s,+}$.
\begin{Proposition} \label{P:induct}
Let $\Lambda \in \cL$, and suppose that $L(\Lambda)$ is finite dimensional.
Then $L^\s(\Lambda)$ is finite dimensional and
$$
L_\s(L^\s(\Lambda)) \iso L(\Lambda).
$$
\end{Proposition}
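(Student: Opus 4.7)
The plan is to combine Lemma \ref{L:indcomp} with an analysis of maximal $\s$-weight spaces. Applying that lemma to $V_\Lambda$ gives $M(\Lambda) \iso M_\s(M^\s(\Lambda))$. A PBW argument, analogous to that behind Theorem \ref{T:Levi}, identifies the maximal $\s$-weight space of $M_\s(V)$ with $V$ itself for any $U(\g^\s,e)$-module $V$, so the maximal $\s$-weight space of $M(\Lambda)$ is $M^\s(\Lambda)$; its $\s$-weight $\lambda \in \s^*$ is the restriction of the highest $\t^e$-weight of $M^\s(\Lambda)$. Under the canonical surjection $M(\Lambda) \onto L(\Lambda)$, this maximal weight space maps onto $L(\Lambda)_\lambda$, producing a $U(\g^\s,e)$-module surjection $M^\s(\Lambda) \onto L(\Lambda)_\lambda$.

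The first key step is to prove $L(\Lambda)_\lambda$ is irreducible as a $U(\g^\s,e)$-module. Given a nonzero $U(\g^\s,e)$-submodule $N \sub L(\Lambda)_\lambda$, irreducibility of $L(\Lambda)$ forces $U(\g,e) \cdot N = L(\Lambda)$; one then extracts the $\lambda$-weight component and shows $(U(\g,e) \cdot N)_\lambda = N$. For this, choose a basis of $\g^e$ consisting of $\s$-weight vectors (available since $\s \sub \t^e$ acts semisimply on $\g^e$ via $\theta$) and form the corresponding PBW basis of $U(\g,e)$. An element of $U(\g,e)$ preserves the $\lambda$-weight space only if it lies in the $\s$-invariant subalgebra $U(\g,e)^\s$, and elements of the ideal $U(\g,e)^\s_\sharp$ act as zero on $N$ because they sit inside the left ideal generated by $\Theta(x)$ for $x$ of strictly positive $\s$-weight, each of which annihilates vectors of maximal $\s$-weight. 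Via the isomorphism $\bar\pi_\s$ of Theorem \ref{T:Levi}, the residual action of $U(\g,e)^\s$ on $L(\Lambda)_\lambda$ factors through $U(\g^\s,e)$, yielding $(U(\g,e) \cdot N)_\lambda = U(\g^\s,e) \cdot N = N$. Hence $N = L(\Lambda)_\lambda$. Finite dimensionality of $L(\Lambda)$ then implies $L(\Lambda)_\lambda$ is finite dimensional, and being an irreducible $U(\g^\s,e)$-quotient of $M^\s(\Lambda)$, it is isomorphic to $L^\s(\Lambda)$; in particular $L^\s(\Lambda)$ is finite dimensional.

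To conclude, apply the right exact functor $M_\s$ to the surjection $M^\s(\Lambda) \onto L^\s(\Lambda)$ and use Lemma \ref{L:indcomp} to rewrite the source, obtaining a surjection $M(\Lambda) \onto M_\s(L^\s(\Lambda))$. Composing with the canonical surjection $M_\s(L^\s(\Lambda)) \onto L_\s(L^\s(\Lambda))$ realizes $L_\s(L^\s(\Lambda))$ as an irreducible quotient of $M(\Lambda)$; by uniqueness of the irreducible head of $M(\Lambda)$, $L_\s(L^\s(\Lambda)) \iso L(\Lambda)$, completing the proof.

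The main technical obstacle lies in the irreducibility argument: one must set up the PBW and weight decomposition of $U(\g,e)$ with respect to the adjoint $\s$-action through $\theta$ precisely enough to see that only the $\s$-invariant part preserves the $\lambda$-weight space, and that $U(\g,e)^\s_\sharp$ acts by zero on maximal weight vectors. Modulo this, everything else follows formally from Lemma \ref{L:indcomp} and the right exactness of parabolic induction.
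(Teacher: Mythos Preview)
Your argument is correct, but it takes a different route from the paper's. The paper does not use Lemma~\ref{L:indcomp} or an irreducibility argument for the maximal $\s$-weight space at all; instead it works on the \emph{restriction} side. It sets $N = R_\s(L(\Lambda))$, observes that the shift conventions make the maximal $\t^e$-weight space $N_\lambda$ isomorphic to $V_\Lambda$ as a $U(\g_0,e)$-module, and notes that the $U(\g^\s,e)$-submodule of $N$ generated by $N_\lambda$ is therefore highest weight of type $\Lambda$ and hence surjects onto $L^\s(\Lambda)$; since $N \sub L(\Lambda)$ is finite dimensional, so is $L^\s(\Lambda)$. For the isomorphism $L_\s(L^\s(\Lambda)) \iso L(\Lambda)$ the paper simply remarks that $L_\s(L^\s(\Lambda))$ has maximal $\t^e$-weight space $V_\Lambda$, so it is a highest weight module of type $\Lambda$ and, being irreducible, must coincide with $L(\Lambda)$.

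Your approach, by contrast, works entirely on the induction side via $M_\s \circ M^\s \iso M$ and the PBW/weight analysis of $U(\g,e)^\s$ acting on the maximal $\s$-weight space. This buys you a sharper intermediate statement: you actually show that the maximal $\s$-weight space $L(\Lambda)_\lambda$ is irreducible over $U(\g^\s,e)$ and isomorphic to $L^\s(\Lambda)$, which the paper's proof never establishes (it only exhibits $L^\s(\Lambda)$ as a quotient of a finite dimensional module). The cost is the extra PBW bookkeeping you flag as the ``main technical obstacle''. The paper's proof avoids that entirely by passing through $R_\s$, at the price of a slightly less explicit identification.
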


\begin{proof}
Let $\Lambda \in \cL^+$, so $L = L(\Lambda)$ is finite dimensional.
There exists $\lambda \in (\t^e)^*$ such that $V_\Lambda = (V_\Lambda)_\lambda$.
We consider the $U(\g^\s,e)$-module $N = R_\s(L(\Lambda))$.
The shifts in the labelling of $\t^e$-weight spaces means that we
have $N_\lambda$ is a maximal $\t^e$-weight space of $N$.
Also it is clear that $N_\lambda \iso V_\Lambda$
as $U(\g_0,e)$-modules.  Therefore, there is an epimorphism from the
submodule of $N$ generated by $N_\lambda$ onto $L^\s(\Lambda)$.
Hence, $L^\s(\Lambda)$ is finite dimensional.

We see that $L_\s(L^\s(\Lambda))$ has a maximal $\t^e$-weight
space isomorphic to $V_\Lambda$.  Therefore, there is a epimorphism
$L(\Lambda) \onto L_\s(L^\s(\Lambda))$, which must be an isomorphism as both modules are irreducible.
\end{proof}

\subsection{Centre and central characters} \label{ss:centchar}

Let $\r$ and $\s$ be a full subalgebras of $\t^e$ with $\r \sub \s$.  Recall that $\Pr : U(\tilde \g) \to U(\tilde \p)$ restricts to an isomorphism $Z(\g) \isoto Z(\g,e)$.  Thus the analogues $\Pr^\r$
and $\Pr^\s$ also restrict
to isomorphisms $\Pr^\r : Z(\g^\r) \isoto Z(\g^\r,e)$ and
$\Pr^\s : Z(\g^\s) \isoto Z(\g^\s,e)$.
To compare these isomorphisms we consider certain Harish-Chandra
isomorphisms, which we require some notation to define.  We write $W^\r$ and $W^\s$
for the Weyl groups with respect to $\t$ of $\g^\r$ and $\g^\s$ respectively, and we write $\b_u$ for
the nilradical of $\b$.
Then we define
$\Psi^\r : Z(\g^\r) \isoto S(\t)^{W^\r}$ by
\begin{align*}
z &\equiv S_\rho(\Psi^\r(z)) \mod U(\g^\r)\b_u^\r,
\end{align*}
and $\Psi^\s : Z(\g^\s) \isoto S(\t)^{W^\s}$ is defined similarly.
We recall that $\rho$ is the half sum of roots in $\Phi^+$, so that
$\Psi^\r$ is not the usual Harish-Chandra isomorphism for $\g^\r$,
as the shift is by $\rho$ rather than a half sum of positive roots
for $\g^\r$. We write $\iota^\r_\s : S(\t)^{W^\r} \into
S(\t)^{W^\s}$ for the natural inclusion.  Then the analogue of
\cite[Theorem 4.7]{BGK} says that there is a unique embedding
$c^\r_\s : Z(\g^\r) \into Z(\g^\s)$ such that the following diagram
commutes:
\begin{equation} \label{e:centre}
\begin{CD}
Z(\g^\r,e) & @<\Pr^\r << & Z(\g^\r) & @>\Psi^\r >> & S(\t)^{W^\r}\\
@V{\bar \pi^\r_\s}VV & & @VV{c^\r_\s}V & & @VV\iota^\r_\s V\\
Z(\g^\s,e) & @<\Pr^\s << & Z(\g^\s) & @>\Psi^\s >> & S(\t)^{W^\s}
\end{CD}
.
\end{equation}

Central characters for $U(\g^\r,e)$-modules and $U(\g^\s,e)$-modules are
defined as in \S\ref{ss:basprop}.
Given an irreducible finite
dimensional $U(\g^\s,e)$-module $V$, Schur's lemma tells us that
$Z(\g^\s,e)$ acts diagonally on $V$.  We write $\psi_V^\s : Z(\g^\s) \to
\C$ for the corresponding central character.
Then, by \cite[Corollary 4.8]{BGK}, the central character of $L^\r_\s(V)$ is
\begin{equation} \label{e:centchar}
\psi^\r_V = \psi^\s_V \circ c^\r_\s : Z(\g^\r) \to \C.
\end{equation}

Central characters give rise to a partition
\begin{equation}
\label{e:partdom} \cL^{\r,+}_\s = \dot{\bigcup_{\psi : Z(\g^\r) \to \C}}
\cL^{\r,+}_{\s,\psi},
\end{equation}
where $\cL^{\r,+}_{\s,\psi} = \{\Lambda \in \cL^{\r,+}_\s \mid L^\r_\s(\Lambda) \text{
has central character } \psi \}$.
The following refinement of Proposition~\ref{P:induct} is immediate from \eqref{e:centchar}.

\begin{Lemma}
Let $\psi : Z(\g) \to \C$ be a character.  Then
$$
\cL^+_\psi \sub \bigcup_{\substack{\psi' : Z(\g^\s) \to \C \\ \psi' \circ c_\s = \psi}} \cL^{\s,+}_{\psi'}.
$$
\end{Lemma}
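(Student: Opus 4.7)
The plan is to combine Proposition~\ref{P:induct} with the central-character formula \eqref{e:centchar}; once these two ingredients are in hand, the inclusion is a one-step deduction, so there is no serious obstacle to overcome.

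More precisely, suppose $\Lambda \in \cL^+_\psi$, so that $L(\Lambda)$ is finite dimensional with central character $\psi : Z(\g) \to \C$. First I would invoke Proposition~\ref{P:induct} to conclude that the $U(\g^\s,e)$-module $L^\s(\Lambda)$ is also finite dimensional and that there is an isomorphism $L_\s(L^\s(\Lambda)) \iso L(\Lambda)$. Since $L^\s(\Lambda)$ is a finite dimensional irreducible $U(\g^\s,e)$-module, Schur's lemma gives it a central character, call it $\psi' : Z(\g^\s) \to \C$; by definition this means $\Lambda \in \cL^{\s,+}_{\psi'}$.

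To finish I would relate $\psi'$ to $\psi$. Applying \eqref{e:centchar} with $\r = 0$ and $V = L^\s(\Lambda)$, the central character of $L_\s(L^\s(\Lambda))$ is $\psi' \circ c_\s$. But $L_\s(L^\s(\Lambda)) \iso L(\Lambda)$ has central character $\psi$, and so $\psi = \psi' \circ c_\s$. Hence the $\psi'$ produced above lies in the index set of the right hand side, proving $\Lambda \in \cL^{\s,+}_{\psi'}$ with $\psi' \circ c_\s = \psi$, as required.
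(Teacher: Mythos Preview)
Your proof is correct and matches the paper's approach exactly: the paper states the lemma as ``immediate from \eqref{e:centchar}'' in its role as a refinement of Proposition~\ref{P:induct}, and your argument simply unpacks this by applying Proposition~\ref{P:induct} to obtain $L_\s(L^\s(\Lambda)) \iso L(\Lambda)$ and then reading off the central-character compatibility from \eqref{e:centchar}.
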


\subsection{Finite dimensional irreducible modules for standard Levi type} \label{ss:stanlevi}

In this subsection, rather than working with full subalgebras $\r$ and $\s$ of
$\t^e$ as above, we work just in the case $\r = 0$ and $\s = \t^e$.  We
recall that we write subscript $0$ instead of superscript $\t^e$, for example
$U(\g,e)_0$ rather than $U(\g,e)^{\t^e}$.

We consider the special case where $e$ is of standard Levi type,
i.e.\ $e$ is regular in $\g_0$.
Then $\k_0 = 0$ and $\p_0 = \b_0$ is a Borel
subalgebra of $\g_0$, and $\b = \b_0 \oplus \q_u$, where $\q_u$ is
the nilradical of $\q$.
We let $\tilde \b_0$ be the opposite Borel subalgebra to
$\b_0$, and set
$\tilde \b = \tilde \b_0 \oplus \q_u$, so $\tilde \b$ is another
Borel subalgebra of $\g$.
Let $\tilde \rho$ be the half sum of the positive roots corresponding
to $\tilde \b$.
A result of Kostant in \cite[\S2]{Ko} tells
us that $U(\g_0,e) \iso S(\t)^{W_0}$, where $W_0$ denotes the Weyl group of $\g_0$ with respect to $\t$.
An  explicit isomorphism
\begin{equation} \label{e:xi}
\xi_{-\tilde \rho}: U(\g_0,e) \isoto S(\t)^{W_0}
\end{equation}
is given in \cite[Lemma 5.1]{BGK}, where $\xi_{-\tilde \rho}$ is the
composition of the natural projection $U(\p_0) \to S(\t)$ with the
shift  $S_{-\tilde \rho} : S(\t) \to S(\t)$.

The finite dimensional irreducible modules for $S(\t)^{W_0}$ are
indexed by the set $\cL = \t^*/W_0$ of $W_0$-orbits in $\t^*$.
Therefore, given $\Lambda \in \t^*/W_0$, we can define an
irreducible $U(\g_0,e)$-module $V_\Lambda$ through $\xi_{-\tilde
\rho}$. Then we have the Verma module $M(\Lambda)$ with irreducible
head $L(\Lambda)$. We note that the central character of
$L(\Lambda)$ corresponds to the $W$-orbit in $\t^*$ that contains
$\Lambda$ through \eqref{e:centre}.

Two conjectures regarding highest weight theory for $U(\g,e)$ are
given in \cite[\S5.1]{BGK}.  The first is \cite[Conjecture
5.2]{BGK}, which gives a condition for $L(\Lambda)$ to be finite
dimensional.  To state this conjecture we need to give some
notation.  We write $L(\lambda)$ for the irreducible highest weight
$U(\g)$-module with highest weight $\lambda - \rho$, with respect to
the Borel subalgebra $\b$.  The adjoint $G$ orbit of $e$ is denoted
by $G \cdot e$ and its closure by $\bar{G \cdot e}$. Then
\cite[Conjecture 5.2]{BGK} says: if $\lambda \in \Lambda$ is chosen
so that $\lan \lambda , \alpha^\vee \ran \notin \Z_{> 0}$ for all
$\alpha \in \Phi_0^+$, then $L(\Lambda)$ is finite dimensional if
and only if $\cVA(\Ann_{U(\g)} L(\lambda))  = \bar{G \cdot e}$,
where $\cVA(I)$ denotes the associated variety of an ideal $I
\subset U(\mf{g})$. We recall that $\Phi_0$ denotes the root system
of $\g_0$ with respect to $\t$ and $\Phi_0^+ = \Phi_0 \cap \Phi^+$.

The second conjecture is \cite[Conjecture 5.3]{BGK} which states that
the category $\cO(e;\q)$ is equivalent to a certain category of
generalized Whittaker modules.  To define this category, we
use $\tilde \b_u$, the
nilradical of the Borel subalgebra $\tilde \b$ defined above.

So $\tilde \b_u$ is a maximal nilpotent subalgebra of
$\q$, and $\chi$ restricts to a character of $\tilde \b_u$.  We define
$\cO(\chi,\q)$ to be the category of all finitely generated
$U(\g)$-modules $M$ that are locally finite over $Z(\g)$ and
semisimple over $\t^e$, such that $x-\chi(x)$ acts locally
nilpotently on $M$ for all $x \in \tilde \b_u$.  The category obtained by
removing the condition that $\t^e$ acts semisimply has been studied
see for example \cite{MS}. In $\cO(\chi,\q)$ there are analogues of
Verma modules that are indexed by $\t^*/W_0$ and
have irreducible heads.
In addition to predicting an equivalence of categories
$\cO(e,\q) \isoto \cO(\chi,\q)$, \cite[Conjecture 5.3]{BGK} also says
that this equivalence should send $M(\Lambda)$ to the Verma module in $\cO(\chi,\q)$
corresponding to $\Lambda \in \t^*/W_0$. It is explained in
\cite[\S5.1]{BGK} that \cite[Conjecture 5.2]{BGK} is a consequence of
\cite[Conjecture 5.3]{BGK}.

In \cite[Theorem 4.1]{Lo3}, Losev proved that a more general
equivalence of categories than that predicted by \cite[Conjecture 5.3]{BGK}
holds.
In the proof of \cite[Theorem 4.1]{Lo3}, an isomorphism $\Psi :
U(\g_0,e) \isoto U(\g,e)_0/U(\g,e)_{0,\sharp}$ is used, which is
possibly different to $\bar \pi^{-1}$ from \eqref{e:barpi}.
Verma modules are defined in \cite{Lo3}
by using
the isomorphism $\Psi$;
however, it is remarked in \cite[\S4.2]{Lo4} that $\Psi^{-1}$ may be
different from $\bar \pi$, which means the labelling of Verma
modules in \cite{Lo3} may be different from that in \cite{BGK}. The
equivalence of categories proved in \cite[Theorem 4.1]{Lo3} does
send Verma modules to Verma modules and respects labels, but the
inconvenience regarding the potentially different labels of Verma
modules in \cite{Lo3} and \cite{BGK} means that we are not able to
deduce \cite[Conjecture 5.2]{BGK} immediately from \cite[Theorem
4.1]{Lo3}. The following proposition resolves this problem.

\begin{Proposition} \label{P:sameiso}
Assume that $e$ is of standard Levi type.
Let
$$
\bar \pi, \Psi^{-1} : U(\g,e)_0/U(\g,e)_{0,\sharp} \isoto U(\g_0,e)
$$
be the
isomorphisms given by \cite[Theorem 4.3]{BGK} and \cite[Theorem
4.1]{Lo3} respectively.  Then $\bar \pi = \Psi^{-1}$.
\end{Proposition}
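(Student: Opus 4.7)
The plan is to show that the composition $\phi := \bar\pi \circ \Psi$ is the identity automorphism of $U(\g_0,e)$. Transporting via Kostant's isomorphism $\xi_{-\tilde\rho}: U(\g_0,e) \isoto S(\t)^{W_0}$ from \eqref{e:xi}, this reduces to showing that the induced automorphism $\tilde\phi$ of $S(\t)^{W_0}$ is trivial.

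The first step is to check that $\tilde\phi$ fixes the subring $S(\t)^W$ pointwise. For $\bar\pi$, this compatibility with the Harish-Chandra projection is exactly the commutative diagram \eqref{e:centre} specialized to $\r = 0, \s = \t^e$. For $\Psi^{-1}$, the analogous diagram must hold because Losev's equivalence in \cite[Theorem 4.1]{Lo3} sends Verma modules to Verma modules and preserves central characters; this forces $\Psi^{-1}$ to intertwine the isomorphism $\Pr : Z(\g) \isoto Z(\g,e)$ with $\Pr_0 : Z(\g_0) \isoto Z(\g_0,e)$ in the same way, modulo the inclusion $\iota_{\t^e} : S(\t)^W \into S(\t)^{W_0}$.

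The second step is to analyze the equivariance of both isomorphisms under the adjoint action of $\t^e$ via the embeddings $\theta$ and $\theta_0$. By \cite[Theorem 4.3]{BGK} and \cite[Lemma 4.2]{BGK}, $\bar\pi$ intertwines $\theta$ and $\theta_0$ up to the shift $S_{-\delta_{\t^e}}$. The same equivariance with the same shift holds for $\Psi^{-1}$: Losev's equivalence respects the $\t^e$-grading on module categories, and the category $\cO(e;\q)$ with its highest weight labels by $\t^*/W_0$ encodes precisely this shift, so the two shifts must coincide. Consequently $\tilde\phi$ is a graded automorphism of $S(\t)^{W_0}$ with respect to the $(\t^e)^*$-weight grading, and also preserves the standard polynomial filtration.

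Finally, a filtered $(\t^e)^*$-graded automorphism of $S(\t)^{W_0}$ that fixes $S(\t)^W$ pointwise must be the identity: since $S(\t)^{W_0}$ is finite and free as an $S(\t)^W$-module, one chooses a homogeneous $S(\t)^W$-basis indexed by $W_0$-coset representatives, and the weight grading together with $S(\t)^W$-linearity of $\tilde\phi - \mathrm{id}$ force it to vanish by induction on polynomial degree. The main obstacle will be verifying cleanly that Losev's $\Psi$ satisfies the same $\t^e$-equivariance and shift as $\bar\pi$; this requires unpacking the construction of $\Psi$ in the proof of \cite[Theorem 4.1]{Lo3} via Fedosov quantization and matching the shift conventions there against those of \cite[\S4]{BGK}. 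Once these compatibilities are in place, the graded uniqueness argument finishes the proof.
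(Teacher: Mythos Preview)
Your overall strategy matches the paper's: transport through $\xi_{-\tilde\rho}$ to get an automorphism $\sigma$ of $S(\t)^{W_0}$, show it fixes $S(\t)^W$ (the paper does this via \eqref{e:centre} together with \cite[Theorem 4.1(1)]{Lo3} and \cite[Theorem 1.2.2(iii)]{Lo1}), and then pin it down using information about $\t^e$. But your final two steps do not work as written.

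The main problem is the ``$(\t^e)^*$-weight grading'' on $S(\t)^{W_0}$. The only natural such grading comes from the adjoint action of $\t^e \subset S(\t)^{W_0}$, and since $S(\t)^{W_0}$ is commutative this grading is concentrated entirely in weight zero. So asserting that $\tilde\phi$ is graded carries no information, and your induction in the last paragraph has nothing to bite on. What the $\t^e$-compatibility of $\bar\pi$ and $\Psi^{-1}$ actually buys you, once the shifts are matched, is that $\tilde\phi$ fixes the subspace $\t^e \subset S(\t)^{W_0}$ pointwise; the paper extracts exactly this from \cite[Remark 5.5]{Lo3}. That is a genuinely useful fact, but it is not a grading, and even combining ``fixes $S(\t)^W$'' with ``fixes $\t^e$'' does not obviously force $\tilde\phi = \mathrm{id}$ by the filtered module argument you sketch: the subalgebra generated by $S(\t)^W$ and $\t^e$ need not be all of $S(\t)^{W_0}$, and choosing a homogeneous $S(\t)^W$-basis does not by itself control $\tilde\phi$ on the remaining generators.

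The paper closes the gap by a different route. After showing $\sigma$ fixes $S(\t)^W$, it passes to the comorphism $\sigma_* : \t^*/W_0 \to \t^*/W_0$, which lies over the identity on $\t^*/W$. On the regular locus, each fibre of $\t^*/W_0 \to \t^*/W$ is a $W^0$-torsor (where $W^0$ is the set of minimal coset representatives), so $\sigma_*$ determines a locally constant map $\t^*_{\reg} \to W^0$; connectedness of $\t^*_{\reg}$ forces this to be a constant $w$, and a short check gives $w \in N_W(W_0)$, so $\sigma$ is conjugation by $w$. Finally, \cite[Lemma 14]{BruG} identifies $N_W(W_0)/W_0$ with $N_{G^e}(\t^e)/C_{G^e}(\t^e)$, which acts faithfully on $\t^e$; since $\sigma$ fixes $\t^e$ pointwise, $w = 1$. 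This reduction to $N_W(W_0)/W_0$ is the missing structural idea in your proposal.
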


\begin{proof}
The composition $\bar \pi \circ \Psi : U(\g_0,e) \to U(\g_0,e)$ is
an automorphism, so through the isomorphism $\xi_{-\tilde \rho}:
U(\g_0,e) \isoto S(\t)^{W_0}$ we obtain an automorphism $\sigma$ of
$S(\t)^{W_0}$.  From \eqref{e:centre}, \cite[Theorem 4.1(1)]{Lo3}
and \cite[Theorem 1.2.2(iii)]{Lo1}, we see that $\Psi^{-1}$ and
$\bar \pi$ agree on the centre $Z(\g,e)$ of $U(\g,e)$. From
\eqref{e:centre} we therefore see that $\sigma$ fixes $S(\t)^W \sub
S(\t)^{W_0}$.  Thus the comorphism of $\sigma$ is a morphism
$\sigma_* : \t^*/W_0 \to \t^*/W_0$ that induces the identity map on
$\t^*/W$.

We define $\t^*_{\reg} = \{v \in \t^* \mid \langle v, \alpha^\vee \rangle \neq 0
\text{ for all } \alpha \in \Phi\}$, and
let $W^0$ be the set of minimal length
representatives of the right cosets of $W_0$ in $W$.
Let $x \in \t^*_\reg$ and let $X$ be the $W_0$-orbit of $x$.  Then
$\sigma_*(X) \in \t^*_\reg/W_0$, so there exists unique $w_x \in
W^0$ such that $w_x x \in \sigma_*(X)$. This gives rise to a map $f:
\t^*_\reg \to W^0$ defined by $f(x) = w_x$.  It is easy to see that
$f$ is locally constant with respect to the Euclidean topology, so,
since $\t^*_\reg$ is connected, $f$ is constant, say $f(x) = w$ for
all $x \in \t^*_\reg$.

Let $v \in W_0$ and $x \in \t^*_\reg$. Then $f(x) = f(vx) = w$, so
we have $wx = v'wvx$ for some $v' \in W_0$.  Therefore, we see that
$wvw^{-1} \in W_0$, so that $w \in N_W(W_0)$.  Thus, conjugation by
$w$ gives a map $c_w : \t^*/W_0 \to \t^*/W_0$, which agrees with
$\sigma_*$ on $\t^*_\reg/W_0$.  Hence, we must have $\sigma_* = c_w$. In
turn this means that $\sigma$ is the map on $S(t)^{W_0}$ induced by
conjugation by $w$.

By \cite[Lemma 14]{BruG}, there is a natural isomorphism $N_W(W_0)/W_0
\iso N_{G^e}(\t^e)/C_{G^e}(\t^e)$, where $N_{G^e}(\t^e)$ and $C_{G^e}(\t^e)$
denote the normalizer and centralizer of $\t^e$ in $G^e$
respectively. This isomorphism is obtained by observing that $\t^e$
is stable under the action of $N_W(W_0)$ and $W_0$ fixes $\t^e$
pointwise.    Now it follows from \cite[Remark 5.5]{Lo3} that
$\sigma$ fixes $\t^e \sub S(\t)^{W_0}$.  Hence, we must have $w = 1$
and $\sigma$ is the identity map.
\end{proof}

As a corollary we state \cite[Conjecture 5.2]{BGK},
however we emphasize that \cite[Theorem 4.1]{Lo3} encapsulates the same data,
however it uses the isomorphism $\Psi$ to define the analogue
of $L(\Lambda)$, while to prove the results in this paper we need
to use the isomorphism $\bar \pi$.

\begin{Corollary} \label{C:BGKconj}
Assume that $e$ is of standard Levi type.
Let $\Lambda \in \t^*/W_0$ and let $\lambda \in \Lambda$ be such
that $\lan \lambda , \alpha^\vee \ran \notin \Z_{> 0}$ for all $\alpha \in \Phi_0^+$.  Then
$L(\Lambda)$ is finite dimensional if and only if
$\cVA(\Ann_{U(\g)} L(\lambda))  = \bar{G \cdot e}$.
\end{Corollary}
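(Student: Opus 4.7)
The plan is to derive the corollary by combining Losev's category equivalence \cite[Theorem 4.1]{Lo3} with Proposition~\ref{P:sameiso}. The structural point, as already explained in \cite[\S5.1]{BGK}, is that Conjecture 5.2 is a formal consequence of the equivalence of categories $\cO(e;\q) \simeq \cO(\chi,\q)$ predicted by Conjecture 5.3. Losev's theorem establishes such an equivalence in considerable generality; the only reason one cannot immediately deduce the conjecture is that Losev labels Verma modules via the isomorphism $\Psi$, whereas the labels in \cite{BGK} (and in the present paper) are given via $\bar\pi$. Proposition~\ref{P:sameiso} eliminates precisely this ambiguity.

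Concretely, the first step is to invoke \cite[Theorem 4.1]{Lo3} to obtain an equivalence of categories $\cO(e,\q) \to \cO(\chi,\q)$ sending the Verma module labelled by $\Lambda \in \t^*/W_0$ (via $\Psi$ and $\xi_{-\tilde\rho}$) to the Whittaker Verma module corresponding to $\Lambda$. Applying Proposition~\ref{P:sameiso} to identify $\Psi^{-1}$ with $\bar\pi$, one sees that this equivalence carries $M(\Lambda)$, and hence $L(\Lambda)$, as labelled in this paper to the corresponding Verma module and its irreducible head in $\cO(\chi,\q)$. In particular, $L(\Lambda)$ is finite dimensional if and only if its image in $\cO(\chi,\q)$ is.

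The second step is the classical translation from Whittaker modules to highest weight modules. Under the hypothesis $\lan \lambda,\alpha^\vee\ran \notin \Z_{>0}$ for all $\alpha \in \Phi_0^+$, the simple highest weight module $L(\lambda)$ is the irreducible head of a parabolic Verma module for $\q$, and the standard comparison between Whittaker modules and category $\cO$ (as outlined in \cite[\S5.1]{BGK}) identifies the irreducible Whittaker module labelled by $\Lambda$ as finite dimensional precisely when $\cVA(\Ann_{U(\g)} L(\lambda)) = \bar{G\cdot e}$. Combining with the first step yields the corollary. The substantive input has already been provided by \cite[Theorem 4.1]{Lo3} and Proposition~\ref{P:sameiso}, so no further obstacle remains; the role of this corollary is simply to record \cite[Conjecture 5.2]{BGK} in the form needed for the later sections of the paper, with the irreducible modules defined using $\bar\pi$ rather than $\Psi$.
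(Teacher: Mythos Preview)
The proposal is correct and takes essentially the same approach as the paper: the paper does not give a separate proof of the corollary but simply records it as an immediate consequence of \cite[Theorem 4.1]{Lo3} together with Proposition~\ref{P:sameiso}, which resolves the labelling ambiguity between $\Psi$ and $\bar\pi$. Your write-up makes this reasoning explicit in exactly the way the paper intends.
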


We finish this subsection by discussing Proposition~\ref{P:induct}
in the case where $e$ is of standard Levi type.  Let $\s$ be a full
subalgebra of $\t^e$.  For $\Lambda \in \cL = \t^*/W_0$, we define
$V_\Lambda$ and the Verma module $M^\s(\Lambda) = M^\s(V_\Lambda)$
for $U(\g^\s,e)$ as above using the isomorphism $\xi_{-\tilde \rho}
: U(\g_0,e) \isoto S(\t)^{W_0}$ from \eqref{e:xi}.  Note that we use
the shift by $-\tilde \rho$ rather than $- \tilde \rho_s$, where
$\tilde \rho_\s$ is the analogue of $\tilde \rho$ for  $\g^\s$; this
can be viewed as a ``shift in origin'' as $\tilde \rho - \tilde
\rho_\s$ is orthogonal to all roots in $\Phi^\s$. With this
convention Proposition~\ref{P:induct} for the standard Levi case is
tidily stated as:

\begin{Corollary} \label{C:inductsl}
Assume that $e$ is of standard Levi type.
Let $\Lambda \in \t^*/W_0$ and suppose that $L(\Lambda)$ is
finite dimensional.  Then $L^\s(\Lambda)$ is finite dimensional.
\end{Corollary}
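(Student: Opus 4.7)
The plan is to derive Corollary~\ref{C:inductsl} as an almost immediate consequence of Proposition~\ref{P:induct}, the only real content being a bookkeeping check that the labelling conventions agree.

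First I would observe that the standard Levi hypothesis descends: since $\s \sub \t^e$ we have $\g_0 = \g^{\t^e} \sub \g^\s$ as a Levi subalgebra, and $e$ remains regular in $\g_0$, so $e$ is of standard Levi type when viewed inside $\g^\s$ as well. Applying \eqref{e:xi} to $\g^\s$ in place of $\g$ therefore identifies the set $\cL_\s$ parametrising finite dimensional irreducible $U(\g^\s,e)$-modules with $\t^*/W_0$. Under this identification, Proposition~\ref{P:induct} asserts exactly that, for any $\Lambda \in \t^*/W_0$ with $L(\Lambda)$ finite dimensional, the module $L^\s(\Lambda)$ is finite dimensional.

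The only remaining point is that the corollary uses the isomorphism $\xi_{-\tilde\rho}$ from \eqref{e:xi} (the shift being by the half-sum of positive roots of $\g$) rather than the analogous $\xi_{-\tilde\rho_\s}$ (the shift by the half-sum of positive roots of $\g^\s$) when defining $V_\Lambda$ on the $\g^\s$-side. The difference $\tilde\rho - \tilde\rho_\s$ is a half-sum of roots lying outside $\Phi^\s$, hence $W^\s$-invariant and in particular $W_0$-invariant. Thus the two shifts differ by translation of $\t^*$ by a $W_0$-fixed vector, which induces the identity on $\t^*/W_0$; the parametrisation of $\cL = \cL_\s$ is therefore the same in either convention, and the corollary follows.

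The main obstacle, such as it is, amounts to keeping the various shifts ($\rho$, $\tilde\rho$, $\delta^\s$ and $\gamma^\s$) straight and checking compatibility with the $\delta^\s$-shift built into the labelling of $\s$-weight spaces in \S\ref{ss:recaphw}; once the observation that $\tilde\rho - \tilde\rho_\s$ is orthogonal to $\Phi^\s$ is in hand, the commutative diagram relating $\theta^\r$, $\theta^\s$ and $\bar\pi^\r_\s$ shows that all of these shifts align, and no further calculation is required.
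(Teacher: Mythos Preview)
Your proposal is correct and takes the same approach as the paper: the corollary is simply Proposition~\ref{P:induct} specialised to the standard Levi case, with $\cL = \t^*/W_0$ via $\xi_{-\tilde\rho}$. The paper gives no separate proof, merely presenting the corollary as a restatement once the labelling convention has been fixed.

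One small inaccuracy: translation of $\t^*$ by a nonzero $W_0$-fixed vector does \emph{not} induce the identity on $\t^*/W_0$; it induces translation by that vector (which is well defined precisely because the vector is $W_0$-fixed). Fortunately this does not matter for the argument. The paper defines $V_\Lambda$, and hence both $M(\Lambda)$ and $M^\s(\Lambda)$, via the single isomorphism $\xi_{-\tilde\rho}$; there is no competing parametrisation via $\xi_{-\tilde\rho_\s}$ to reconcile. The remark that $\tilde\rho - \tilde\rho_\s$ is orthogonal to $\Phi^\s$ is made in the paper only to explain why choosing the $-\tilde\rho$ shift (rather than the intrinsic $-\tilde\rho_\s$) is harmless---it is a shift of origin, not a genuinely different normalisation---not because any compatibility needs to be verified for the proof itself.
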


\subsection{Component group action} \label{ss:hwcomp}

Let $\s$ be a full subalgebra of $\t^e$ and let $\tilde G^\s$ be the
centralizer of $\s$ in $\tilde G$.
As discussed at the end of \S\ref{ss:basprop}, there is an action of
the component group $\tilde C^\s = \tilde G^{\s,e}/(\tilde G^{\s,e})^\circ$ on the set of finite dimensional
irreducible $U(\g^\s,e)$-modules; here $\tilde G^{\s,e}$ denotes the centralizer of $e$ in $\tilde G^\s$.

Given $c \in \tilde C^\s$ and an irreducible $U(\g^\s,e)$-module $L$, we
write $c \cdot L$ for the irreducible $U(\g^\s,e)$-module obtained
by twisting with $c$. By definition $c \cdot L$ is equal to $L$ as a
vector space with action given by choosing $\dot c$ in $\tilde G^{\s,e}$
that lifts $c$ and setting $u \cdot v = (\dot c \cdot u)v$ for $u
\in U(\g^\s,e)$ and $v \in L$; this only depends on the choice of
$\dot c$ up to isomorphism. This gives rise to an action of $\tilde C^\s$
on $\cL^{\s,+}$: for $c \in \tilde C^\s$ and $\Lambda \in \cL^{\s,+}$ we
write $c \cdot_\s \Lambda$ for the image of $\Lambda$ under $c$; by
definition we have $c \cdot L^\s(\Lambda) \iso L^\s(c \cdot_\s
\Lambda)$. Moreover, as the action of $\tilde C^\s$ fixes $Z(\g^\s,e)$, we
get an action on $\cL^{\s,+}_{\psi}$ for each $\psi : Z(\g^\s)
\to \C$. In the case $\s = 0$ we omit the subscript in the notation
for the action of $\tilde C$ on $\cL^+$.

The inclusion $\tilde G^{\s,e} \into \tilde G^e$ induces a injective
map $\iota: \tilde C^\s \to \tilde C$, so we can view $\tilde C^\s$
as a subgroup of $\tilde C$. We briefly explain why $\iota$ is
injective.  We can also induce $\iota$ from the inclusion $\tilde
H^{\s,e} \into \tilde H^e$.  If $x \in \tilde H^{\s,e} \cap (\tilde
H^e)^\circ$, then $x$ and $S$ generate a connected Abelian subgroup
of $\tilde H^e$, where $S$ is the torus in $\tilde G$ with Lie
algebra $\s$. Therefore, there is a Borel subgroup of $D$ of
$(\tilde H^e)^\circ$ containing $x$ and $S$.  Then the centralizer
of $\s$ in $D$, denoted $D^\s$, is a Borel subgroup of $(\tilde
H^{\s,e})^\circ$ and $x \in D^\s$. Hence, $x \in (\tilde
H^{\s,e})^\circ$.

By Proposition
\ref{P:induct}, we have $\cL^+ \sub \cL^{\s,+}$.  Therefore, given
$\Lambda \in \cL^+$ and $c \in \tilde C^\s \sub \tilde C$ we can consider
$c \cdot_\s \Lambda$ and $c \cdot \Lambda$.  The following lemma says that
these two actions of $c$ on $\Lambda$ are equal.

\begin{Lemma} \label{L:cact1}
Let $c \in \tilde C^\s$ and $\Lambda \in \cL^+$.  Then $c \cdot_\s \Lambda = c \cdot \Lambda$.
\end{Lemma}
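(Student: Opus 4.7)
The plan is to lift $c$ to an element $\dot c \in \tilde H^{\s,e} \subseteq \tilde H^e$ and to verify that this single lift induces compatible twists on $U(\g,e)$ and on $U(\g^\s,e)$, so that the inductive identification $L(\Lambda) \iso L_\s(L^\s(\Lambda))$ from Proposition~\ref{P:induct} becomes $\tilde C^\s$-equivariant.

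First I would observe that since $\dot c$ centralises the torus $S \subseteq \tilde H$ with $\Lie S = \s$, the adjoint action of $\dot c$ preserves the $\s$-weight decompositions of $\g$ and of $U(\g,e)$. In particular, $\dot c$ stabilises both the subalgebra $U(\g,e)^\s$ and the two-sided ideal $U(\g,e)^\s_\sharp$, the latter because it is generated by the elements $\Theta(x)$ for $x \in \g^e_\alpha$ with $\alpha \in \Phi_{\s,+}$, and $\dot c$ permutes these $\s$-weight subspaces among themselves. Next I would verify that the projection $S_{-\gamma_\s}\circ \pi_\s : U(\tilde\p)^\s \twoheadrightarrow U(\tilde\p^\s)$ of Theorem~\ref{T:Levi} intertwines the two $\dot c$-actions: the map $\pi_\s$ is manifestly equivariant since $\dot c$ preserves the $\s$-weight decomposition, and $S_{-\gamma_\s}$ is equivariant because $\gamma_\s$ is a sum of $\s$-weights of vectors that are permuted by $\dot c$ within fixed $\s$-weight spaces. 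This will show that $\bar\pi_\s : U(\g,e)^\s/U(\g,e)^\s_\sharp \isoto U(\g^\s,e)$ is $\dot c$-equivariant, and thus that for any finite dimensional irreducible $U(\g^\s,e)$-module $N$ one has $c \cdot L_\s(N) \iso L_\s(c \cdot N)$.

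Once this compatibility is in place, the remainder is formal. Applying Proposition~\ref{P:induct} to $\Lambda \in \cL^+$ gives $L(\Lambda) \iso L_\s(L^\s(\Lambda))$; twisting by $c$ and using that $c \cdot \Lambda \in \cL^+$ (since $\tilde C$ preserves $\cL^+$), a second application of Proposition~\ref{P:induct} yields
\begin{equation*}
L_\s(L^\s(c \cdot \Lambda)) \iso L(c \cdot \Lambda) \iso c \cdot L(\Lambda) \iso L_\s(c \cdot L^\s(\Lambda)) \iso L_\s(L^\s(c \cdot_\s \Lambda)).
\end{equation*}
Passing to the maximal $\t^e$-weight space on both ends (which recovers the inducing $U(\g^\s,e)$-module up to isomorphism, by the argument in the proof of Proposition~\ref{P:induct}) gives $L^\s(c \cdot \Lambda) \iso L^\s(c \cdot_\s \Lambda)$, and the parametrisation of finite dimensional irreducible $U(\g^\s,e)$-modules by $\cL^{\s,+}$ forces $c \cdot \Lambda = c \cdot_\s \Lambda$.

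The main obstacle is the equivariance of $\bar\pi_\s$ under $\dot c$; everything else is a formal chase. The subtlety there is that $\dot c$ need not normalise $\t$ or preserve the grading subspaces $\g(j)$ individually, so the equivariance cannot be read off at the level of $\p$ but only at the level of the $\s$-weight spaces. Checking that the shift character $\gamma_\s$ is $\dot c$-invariant as a character of $\p^\s$ (rather than merely of $\t$) is where I would have to be most careful.
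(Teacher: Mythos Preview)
Your approach is correct and is essentially the same as the paper's; the only difference is organisational. The paper does not first prove the abstract compatibility $c \cdot L_\s(N) \iso L_\s(c \cdot N)$ and then invoke injectivity of $L_\s$. Instead it works directly inside $c \cdot L_\s(L^\s(\Lambda))$: since $\dot c$ centralises $\s$, the subspace $N = (1 + U(\g,e)_{\s,\sharp}) \otimes L^\s(\Lambda)$ is still a maximal $\s$-weight space of the twisted module, and as a $U(\g^\s,e)$-module (via $\bar\pi_\s$) it is identified with $c \cdot L^\s(\Lambda) \iso L^\s(c \cdot_\s \Lambda)$. Highest weight theory then gives $c \cdot L(\Lambda) \iso L_\s(L^\s(c\cdot_\s\Lambda)) = L(c\cdot_\s\Lambda)$ directly, without the extra pass through Proposition~\ref{P:induct} and the maximal $\t^e$-weight space.

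The step you single out as the main obstacle, namely the $\dot c$-equivariance of $\bar\pi_\s$, is exactly what the paper uses implicitly when it asserts $N \iso c \cdot L^\s(\Lambda)$; it simply does not spell it out. Your worry about the $\dot c$-invariance of $\gamma_\s$ is easily settled: as a character of $\p^\s$ one has $\gamma_\s(x) = \operatorname{tr}(\ad x|_{\n_-})$ where $\n_- = \bigoplus_{\alpha \in \Phi_{\s,-}} \n_\alpha$, and since $\dot c \in \tilde H^{\s,e}$ stabilises $\n_-$ this trace is $\dot c$-invariant. Note also that $\dot c \in \tilde H$ does preserve each $\g(j)$ (as $\tilde H$ centralises $h$), so the situation is less delicate than you suggest.
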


\begin{proof}
Consider $c \cdot L_\s(L^\s(\Lambda))$ and view $N = (1 +
U(\g,e)_{\s,\sharp}) \otimes L^\s(\Lambda)$ as a subspace.  Then $N$
is a maximal $\s$-weight space of $c \cdot L_\s(L^\s(\Lambda))$
because $c \in \tilde C^\s$.  Therefore, we can view $N$ as a
$U(\g^\s,e)$-module, and as such it is isomorphic to $c \cdot
L^\s(\Lambda)$, which by definition is isomorphic to  $L^\s(c
\cdot_\s \Lambda)$.

Therefore, there is a homomorphism from the submodule of
$c \cdot L_\s(L^\s(\Lambda))$ generated by $N$ to $L_\s(L^\s(c \cdot_\s \Lambda))$.
Now using the fact that $c \cdot L_\s(L^\s(\Lambda))$ is
irreducible, we see that it must be isomorphic to $L_\s(L^\s(c \cdot_\s \Lambda))$.

Using Proposition~\ref{P:induct}, we have $c \cdot L(\Lambda) \iso c \cdot L_\s(L^\s(\Lambda)) \iso L(c \cdot_\s \Lambda)$ and by definition
$c \cdot L(\Lambda) \iso L(c \cdot \Lambda)$, so we are done.
\end{proof}

\section{Combinatorics of tables} \label{S:comb}

Our proof of Theorem~\ref{T:main} depends on combinatorics of the symmetric
pyramid associated to a nilpotent element of a classical Lie algebra
as described in Section~\ref{S:hwclass}.  In this section, we
present the underlying combinatorial results that we require for the
proof.

Throughout the remainder of this article, by a partition we mean a
multiset of positive integers; we usually denote partitions by
writing them as a sequence in either increasing or decreasing order,
and often use exponential notation to denote repeated entries.  We
write $>$ for the usual dominance ordering on partitions. Given a
partition $\bp$ we write $\bp^T$ for the transpose partition.  We
recall the elementary fact that if $\bp$ and $\bq$ are partitions
of the same integer
with $\bp \ge \bq$, then $\bp^T \le \bq^T$.

\subsection{Frames and tables} \label{ss:frames}

We define a {\em frame} to be a connected arrangement of boxes in
the plane such that the boxes are aligned in rows, and so that rows are connected.
A frame is called {\em justified} if the boxes are aligned in columns and is called
{\em left justified} if its rows all start in the same column;
so a left justified frame is justified.
We say a justified frame $F$ is {\em preconvex} if given any two columns in $F$, we can slide one of them
horizontally so that it fits entirely inside the other.  We say a frame is {\em convex} if it is preconvex
and it is has connected columns.  For
example
\[
\Diagram{ &  \cr  & & \cr  & & & \cr \cr }
\]
is a left justified
convex frame, and
\[
\begin{picture}(36,36)
\put(12,0){\line(1,0){24}} \put(0,12){\line(1,0){36}} \put(0,24){\line(1,0){36}}
\put(0,36){\line(1,0){24}} \put(0,12){\line(0,1){24}}
\put(12,0){\line(0,1){36}} \put(24,0){\line(0,1){36}}
\put(36,0){\line(0,1){24}}
\end{picture}
\]
is a frame that is justified, but not left justified and not convex.  We note that a left
justified frame is convex if and only if its columns are connected.

A frame filled with integers is called a {\em table}. Given a table
$A$, the {\em frame of $A$} is obtained by removing the integers in
the boxes.  We say a table is justified, left justified or convex if
its frame is.

Let $F$ be a frame.  We write $\Tab(F)$ for the set of all tables
with frame $F$.  The {\em row equivalence class} of $A \in \Tab(F)$
is obtained by taking all possible permutations of the entries in
the rows of $A$; we write $\bar A$ for the row equivalence class of
$A$.
We define $\Row(F) = \{\bar A \mid A \in \Tab(F)\}$.
Given $A
\in \Tab(F)$, there is a unique element $A^\leq \in \bar A$ with
weakly increasing rows.  Let $\Tab^\leq(F) = \{A^\leq \mid A \in \Tab(F)\}$,
which is in bijection with $\Row(F)$.

Given a frame $F$ we denote by $l(F)$ the left justified frame obtained from $F$ by justifying
the rows.  Given $A$ in $\Tab(F)$ we also define $l(A) \in \Tab(l(F))$ by left justifying all of the rows of $A$.

Suppose $F$ is justified.  We say $A \in \Tab(F)$ is {\em
column strict} if the columns of $A$ are decreasing and we say that $A$ is
{\em row equivalent to column strict} if there exists column strict
$B \in \bar A$.  For example,
\[
A =  \Diagram{ 6 & 4 \cr 5  \cr  1 & 2 & 3 \cr }
\]
is column strict, but $A^\leq$ is not column strict, so $A^\leq$ is row
equivalent to column strict.  We emphasize that in our definition of
column strict we require entries in columns to be decreasing over
gaps in columns.  So for example
\[
\Diagram{ 5 & 1  \cr 4  \cr  2 & 3 \cr }
\]
is neither column strict nor row equivalent to column strict.
For a general frame $F$, we say that $A \in \Tab(F)$ is {\em justified row equivalent to
column strict} if $l(A)$ is row equivalent to column strict.  We note that
in the case $F$ is preconvex, then the notions of row equivalent to column strict
and
justified row equivalent to
column strict coincide.

Let $A \in \Tab(F)$.  We denote the number of rows of $F$ by $r_F =
r_A$ and label the rows of $F$ and $A$ with $1,\dots,r_F$ from top
to bottom. We define $\part(F) = \part(A)$ to be the partition given
by the row lengths in $F$.
For example,
if
\[
A =  \begin{array}{c} \Diagram{ 2 & 3 \cr 5  \cr 1 & 4 & 6 \cr } \end{array},
\]
then $\part(A) = (3,2,1)$.  If $\part(F) = \bp$, then we sometimes
say $F$ is associated to $\bp$.  Given $1 \le i < j \le r_A$, we
write $A_i$ for the $i$th row of $A$, and $A^i_j$ for the table
obtained from $A$ by removing rows $1,\dots,i-1$ and
$j+1,\dots,r_A$.  Note that when considering subtables of the form
$A^i_j$ we continue to use the labelling of rows inherited from $A$.
Let $\word(A)$ denote the sequence of integers created by listing
the entries in $A$ row by row from left to right, top to bottom.
With $A$ as above, we have $\word(A) = (2,3,5,1,4,6)$.

A {\em tableaux} is a left justified table $A$ with increasing row lengths  such
that $A = A^\leq$ and $A$ is column strict.  For example
\[
  \Diagram{5 & 7 \cr 2 & 4 \cr 1 & 3 & 6\cr}
\]
is a tableaux.

\subsection{The Robinson--Schensted algorithm} \label{ss:RS} Our discussion of the
Robinson--Schensted algorithm follows \cite{F}. The
Robinson--Schensted algorithm takes as input a finite list of
integers, called a {\em word}, and outputs a
tableaux.

The algorithm is defined recursively, starting with the empty
tableaux. If $w = a_1 \dots a_n$ is the word, then we assume that
$a_1, \dots, a_{i-1}$ have already been inserted. To insert $a_i$,
assume $b_1 \leq \dots \leq b_k$ is the bottom row of the tableaux.
If $a_i \geq b_k$, then insert $a_i$ at the end of the bottom row.
Otherwise there exists $j$ such that $a_i < b_j$ and $a_i \geq
b_{j-1}$. Replace $b_j$ with $a_i$, then recursively insert $b_j$
into the diagram with the bottom row removed.  In this latter
situation we say that $a_i$ {\em bumps} $b_j$. We write $\RS(w)$ for
the output of the Robinson--Schensted algorithm applied to the word
$w$. Given a frame $F$ and $A \in \Tab^\leq(F)$ we write $\RS(A) =
\RS(\word(A))$.

An important related concept is that of {\em Knuth equivalence} on
the set of words of integers. If $x, y$, and $z$ are integers for
which $x < y < z$, and $u$ and $v$ are words of integers, then we
declare that
\begin{equation} \label{knuth1}
  uyzxv \equiv uyxzv
\end{equation}
and
\begin{equation} \label{knuth2}
  uxzyv \equiv uzxyv.
\end{equation}
The equivalence relation on the set of words of integers generated
by \eqref{knuth1} and \eqref{knuth2} is the Knuth equivalence
relation. The Robinson--Schensted algorithm can be interpreted as
choosing a canonical representative in the Knuth equivalence class
of a word. It is proved in \cite[\S2]{F} that if $w$ is a word of
integers, then $w \equiv \word(\RS(w))$.

We now discuss two alternative methods for calculating $\part(\RS(w))$, which are dual to each other.
The first involves finding disjoint weakly increasing subwords of $w$.
We define $\ell(w,k)$ to be the maximum possible sum of the
lengths of $k$ disjoint weakly increasing subsequences of $w$. For
example, $\ell(412563,1) = 4$ use the increasing subword $1256$
and $\ell(412563,2) = 6$ using the subwords $456$ and $123$.  The following lemma is an
immediate corollary of \cite[Lemma 3.1.1]{F} and \cite[Lemma
3.1.2]{F}:

\begin{Lemma} \label{altRS}
Let $w$ be a word of integers and let $\bp = (p_1 \ge
\dots \ge p_n) = \part(\RS(w))$. Then for all $k \ge 1$,
$\ell(w,k) = p_1 + \dots + p_k$.
\end{Lemma}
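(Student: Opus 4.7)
The plan is to first establish that $\ell(\cdot, k)$ is invariant under Knuth equivalence, and then compute $\ell(\word(T), k)$ directly when $T$ is a tableau of shape $\bp$. Once both steps are in hand, the identity $w \equiv \word(\RS(w))$ recalled from \cite[\S2]{F} gives
\[
\ell(w, k) = \ell(\word(\RS(w)), k) = p_1 + \dots + p_k,
\]
which is the claim.

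For the invariance step, it suffices to check that a single application of \eqref{knuth1} and of \eqref{knuth2} preserves $\ell(\cdot, k)$. Consider the move $uyzxv \equiv uyxzv$ with $x<y<z$. Given $k$ disjoint weakly increasing subsequences of $uyzxv$ realizing $\ell(uyzxv,k)$, one performs a case analysis on which of the distinguished letters $y,z,x$ appear in the chosen subsequences and in which subsequences they occur; in each case one exhibits $k$ disjoint weakly increasing subsequences of $uyxzv$ of at least the same total length. The only delicate case is when one subsequence contains $z$ and a different subsequence contains $x$; here $y$, which is between $x$ and $z$ and which precedes both on either side of the equivalence, acts as an anchor that permits the reassignment of $x$ and $z$ to subsequences without loss of total length. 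The symmetric argument covers the reverse direction, and an analogous analysis handles the move \eqref{knuth2}.

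For the computation when $T$ is a tableau of shape $\bp$, the lower bound $\ell(\word(T), k) \ge p_1 + \dots + p_k$ is immediate: the $k$ longest rows of $T$ (the bottom $k$ under our conventions) appear as contiguous weakly increasing blocks of $\word(T)$ and are pairwise disjoint, with total length $p_1 + \dots + p_k$. For the upper bound, note that each column of $T$ is strictly decreasing from top to bottom, and within $\word(T)$ the entries of any single column appear in that strictly decreasing order. Hence any weakly increasing subsequence of $\word(T)$ contains at most one entry from each column, so $k$ disjoint weakly increasing subsequences contain at most $k$ entries from each column. If the columns of $T$ have heights $p_1^T \ge p_2^T \ge \dots$, then summing gives $\ell(\word(T), k) \le \sum_{j} \min(p_j^T, k)$, which equals the area of the Young diagram $\bp$ truncated at height $k$, namely $p_1 + \dots + p_k$. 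The main obstacle in this whole scheme is the case analysis needed for Knuth invariance; the remaining steps are essentially bookkeeping.
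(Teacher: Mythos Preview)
Your approach is correct and is essentially the standard proof of Greene's theorem. The paper itself does not give a proof: it simply states that the lemma ``is an immediate corollary of \cite[Lemma 3.1.1]{F} and \cite[Lemma 3.1.2]{F}'', so there is no argument in the paper to compare against beyond the citation. Your two-step plan (Knuth invariance of $\ell(\cdot,k)$, then direct computation on the reading word of a tableau) is exactly the route taken in Fulton's treatment, so in spirit you are reproducing the cited proof rather than giving a genuinely different one.

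One small correction to your sketch of the invariance step. For the move $uyzxv \equiv uyxzv$ with $x<y<z$, the direction $uyzxv \to uyxzv$ is in fact harmless: no weakly increasing subsequence of $uyzxv$ can contain both $z$ and $x$, and every admissible pattern among $\{x,y,z\}$ persists as a subsequence after the swap, so all $k$ chains carry over unchanged. The genuinely delicate case is the \emph{reverse} direction $uyxzv \to uyzxv$, when a \emph{single} subsequence $S_1$ contains both $x$ and $z$ (which is possible in $uyxzv$ but not in $uyzxv$). There your ``anchor'' idea with $y$ is exactly right: if $y$ is unused, replace $x$ by $y$ in $S_1$; if $y$ lies in some other $S_2$, swap the $v$-tails of $S_1$ and $S_2$ and attach $z$ to the chain through $y$. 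An analogous analysis handles \eqref{knuth2}. With this adjustment your case analysis goes through, and the upper bound via column heights is clean and correct.
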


The dual version of the above lemma considers lengths of strictly decreasing subwords.
We define \textctc$(w,k)$ to be the maximum possible sum of the lengths of
$k$ disjoint strictly decreasing subsequences of $w$.
The following lemma is a consequence of \cite[Theorem 1.6]{Gr}.

\begin{Lemma} \label{altcRS}
Let $w$ be a word of integers and let $\bp^T = (p^*_1 \ge \dots \ge
p^*_n)$ be the dual partition to $\bp = \part(\RS(w))$. Then for all
$k \ge 1$, {\em \textctc}$(w,k) = p^*_1 + \dots + p^*_k$.
\end{Lemma}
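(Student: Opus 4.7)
The plan is to observe that Lemma~\ref{altcRS} is the "column" half of Greene's classical theorem on the Robinson--Schensted correspondence, whose "row" half was used in Lemma~\ref{altRS}. Greene's theorem, as stated in \cite{Gr}, packages both statements into a single result: for any word $w$ with $\bp = \part(\RS(w))$ and any $k \ge 1$, the maximum total length of $k$ disjoint weakly increasing subsequences of $w$ equals $p_1 + \cdots + p_k$, and the maximum total length of $k$ disjoint strictly decreasing subsequences of $w$ equals $p^*_1 + \cdots + p^*_k$. So a first approach is simply to invoke the second assertion from Greene verbatim; the symmetry between rows and columns in Greene's proof matches the symmetry between weakly increasing and strictly decreasing subsequences, because the semistandard tableaux produced by row insertion have weakly increasing rows and strictly decreasing columns.

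If one prefers to derive Lemma~\ref{altcRS} from Lemma~\ref{altRS} directly, the plan is to reduce strictly decreasing subsequences to weakly increasing ones by a standardization/negation trick. First I would standardize $w$ to a permutation $\tilde w$ by replacing equal entries with distinct integers in an order that preserves weakly increasing subsequences but promotes strictly decreasing subsequences to strictly decreasing subsequences of distinct values; the standard convention (break ties by the leftmost position receiving the smallest standardized value) has the property that $\part(\RS(\tilde w)) = \part(\RS(w))$. Then I would replace $\tilde w$ by its entry-wise negation $-\tilde w$, which converts strictly decreasing subsequences of $\tilde w$ into strictly (hence weakly) increasing subsequences of $-\tilde w$, and note that $\RS(-\tilde w)$ has shape $\part(\RS(\tilde w))^T$ (this is a standard duality, e.g.\ because negation intertwines row and column insertion on permutations). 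Applying Lemma~\ref{altRS} to $-\tilde w$ then gives $p^*_1 + \cdots + p^*_k$ on the right hand side.

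The main obstacle in either route is the precise handling of repeated entries in $w$: Greene's theorem in its cleanest form is usually proved for permutations, and one needs to check that passing from strictly decreasing to weakly increasing subsequences under standardization or negation does not introduce spurious length, and that the shape of $\RS$ is preserved. Once the standardization is set up correctly (with ties broken consistently so that weak inequalities in $w$ are never turned into equalities in $\tilde w$ while strict inequalities are preserved), this matching is routine, but it is the only delicate point. Given that the authors explicitly quote \cite[Theorem 1.6]{Gr}, the cleanest write-up is to state the theorem of Greene in the form above and note that Lemma~\ref{altcRS} is precisely its strictly decreasing case.
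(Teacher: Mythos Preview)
Your proposal is correct and matches the paper's approach exactly: the paper simply states that the lemma is a consequence of \cite[Theorem 1.6]{Gr}, i.e.\ Greene's theorem, which is precisely what you invoke. Your additional sketch via standardization and negation is a valid alternative derivation, but is not needed since the direct citation suffices.
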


\begin{Remark} \label{R:altRS}
As the above example illustrates, if the first $k$ terms of
$\part(\RS(w))$ are $p_1, \dots, p_k$, it does not mean that we can
find $k$ disjoint weakly increasing subsequences of $w$ of lengths
$p_1, \dots, p_k$. However, one situation where this is possible is
when $\part(\RS(w))$ is of the form $(p_1^a \ge (p_1-1)^b \ge \dots )$.
In this case it
is easy to see that one must be able to find $a$ disjoint
weakly increasing subsequences of $w$
length $p_1$,
which are disjoint from
$b$ disjoint weakly increasing
subsequences of $w$ of length $p_1-1$.

The situation where we consider $\bp^T$ and strictly decreasing
subsequences is completely analogous.
\end{Remark}

\subsection{Column strict tables}

The following theorem, which is proved after three preliminary
lemmas, is the important combinatorial result required for the proof
of Theorem~\ref{T:main}.

\begin{Theorem} \label{recs}
Suppose $F$ is convex and let $A \in \Tab^\leq(F)$.  Then
$\part(\RS(A)) =
\part(F)$ if and only if $A$ is row equivalent to column strict.
\end{Theorem}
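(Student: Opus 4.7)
The plan is to prove both directions by relating column-strict representatives of $\bar A$ to disjoint strictly decreasing subsequences of $\word(A)$, with Lemma~\ref{altRS} and Lemma~\ref{altcRS} translating between the two pictures. Throughout I shall use that, because $F$ is convex, its distinct column intervals form a nested chain $J_1\subsetneq J_2\subsetneq\dots\subsetneq J_L=[1,r]$ with multiplicities $n_1,\dots,n_L$, and the multiset of column lengths of $F$ is exactly $\bp^T$.

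For the ``if'' direction, suppose $B\in\bar A$ is column strict. In each row $i$ I choose a value-preserving bijection between the positions of $B_i$ and the positions of $A_i$, which exists because the two rows agree as multisets. Tracing $B$'s columns through these bijections yields one disjoint strictly decreasing subsequence of $\word(A)$ per column of $F$, of length equal to that column, so the collection of lengths is exactly $\bp^T$. Summing the $k$ largest and applying Lemma~\ref{altcRS} gives $\part(\RS(A))^T\ge\bp^T$ in dominance, hence $\part(\RS(A))\le\bp$. Combined with the reverse inequality $\part(\RS(A))\ge\bp$ supplied by Lemma~\ref{altRS} applied to the $r$ weakly increasing rows of $A$, this yields $\part(\RS(A))=\bp$.

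For the ``only if'' direction, assume $\part(\RS(A))=\bp$. The crucial first step is to produce a single partition of the positions of $\word(A)$ into disjoint strictly decreasing subsequences of lengths exactly $\bp^T$; this is provided by the columns of the recording tableau of the RSK insertion of $\word(A)$. Because each row of $A$ is weakly increasing, any strictly decreasing subsequence visits each row of $A$ at most once, so a subsequence of length $\ell$ uses exactly $\ell$ distinct rows. A counting argument, working from the outermost ring inwards, now pins down the row sets: the $n_L$ subsequences of length $|J_L|=r$ must cover every row, and this already exhausts the row-length quota $p_i=n_L$ of each row $i$ in the outermost ring $J_L\setminus J_{L-1}$, so no shorter subsequence can touch such rows; consequently the length-$|J_{L-1}|$ subsequences have row sets contained in, and by cardinality equal to, $J_{L-1}$, and inductively the length-$|J_k|$ subsequences have row set $J_k$ for every $k$. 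Assigning each length-$|J_k|$ subsequence to one of the $n_k$ columns of $F$ with interval $J_k$, and writing its entries down that column in strictly decreasing order, produces a table $B$ with frame $F$ which is column strict by construction and lies in $\bar A$, because the subsequences partition the entries of each row of $A$.

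The main obstacle is justifying the ``exact length'' decomposition at the start of the ``only if'' argument: Lemma~\ref{altcRS} as stated supplies maximal $k$-families for each $k$ but not a single partition of positions whose multiset of lengths equals $\bp^T$. The natural source is the RSK correspondence via the recording tableau, which is standard but not recorded explicitly in the excerpt. An alternative avoiding RSK would be induction on the chain length $L$, peeling off the outermost ring $J_L\setminus J_{L-1}$ together with its $n_L$ columns and reducing to a convex frame with strictly fewer distinct interval sizes, invoking Remark~\ref{R:altRS} in the resulting simpler base case.
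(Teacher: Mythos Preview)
Your ``if'' direction is correct and pleasantly direct: the columns of a column-strict $B\in\bar A$ do transfer, via row-by-row value bijections, to disjoint strictly decreasing subsequences of $\word(A)$ of lengths $\bp^T$, and together with the weakly increasing rows this sandwiches $\part(\RS(A))$ between $\bp$ and $\bp$. This is genuinely different from the paper's proof, which instead pushes $A$ by a sequence of row swaps $s_i$ (using Lemmas~\ref{recs1} and \ref{swaprs}) to a tableau and observes $\RS$ of a tableau is itself.

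The ``only if'' direction, however, has a real gap, and your own diagnosis of the obstacle is accurate but your proposed fix is wrong. The columns of the recording tableau do \emph{not} in general index strictly decreasing subsequences of $\word(A)$. For $w=(1,2,3,2,1)$ one has $\part(\RS(w))=(3,1,1)$ and the recording tableau has first column $\{1,4,5\}$, but $(w_1,w_4,w_5)=(1,2,1)$ is not strictly decreasing. So the step ``this is provided by the columns of the recording tableau'' fails, and with it the whole argument that follows, since your counting of row sets presupposes a single partition of all positions into strictly decreasing pieces of lengths exactly $\bp^T$. Your alternative inductive sketch does not close the gap either: after removing $n_L$ length-$r$ antichains one must know that the residual word $w'$ satisfies $\part(\RS(w'))=\bp'$, and Remark~\ref{R:altRS} together with Lemma~\ref{altcRS} only yields $\part(\RS(w'))\ge\bp'$, not the reverse inequality.

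By contrast, the paper's proof of this direction avoids the existence of any such global antichain partition. It uses Lemma~\ref{swapdef} to see that row swaps are defined whenever $\part(\RS(A))=\part(F)$, Lemma~\ref{swaprs} to see they preserve $\RS$, row-swaps $A$ to a table $B$ with increasing row lengths, invokes Lemma~\ref{altcRS} only in the easy endgame to force $B$ itself to be column strict, and then swaps back using Lemma~\ref{recs1} to conclude that $A$ was row equivalent to column strict. If you want to rescue your approach, you would need an independent proof that a word whose RS shape is $\bp$ admits a partition of its positions into strictly decreasing subsequences of sizes $\bp^T$; this is true for two-dimensional posets but is a separate theorem, not a consequence of Lemma~\ref{altcRS} or of the recording tableau.
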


An important part of the proof of Theorem~\ref{recs} involves the
notion of row swapping.  Fix a justified preconvex frame $F$, let $A
\in \Tab^\leq(F)$ and let $1 \le i < r_F$.  We define $s_i(F)$ to be
the frame obtained from $F$ by swapping the $i$th and $(i+1)$th row;
note that $F$ being preconvex ensures that $s_i(F)$ is connected.
Below we define $s_i(A) \in \Tab^\leq(s_i(F))$ provided $A_i$ and
$A_{i+1}$ satisfy certain conditions.

Let $c_1 \le c_2 \le \dots c_s$ be the entries of $A_i$ and let $d_1
\le d_2 \le \dots d_t$ be the entries of $A_{i+1}$.  We split into
two cases.

\noindent {\bf Case 1}: $s < t$. Then $s_i(A)$ is defined if $d_i <
c_i$ for $i = 1,\dots,s$.  In this case, we choose $e_1, \dots, e_s$
from $d_1, \dots d_t$ so that $e_i < c_i$ and $\sum_{i=1}^s c_i-e_i$
is minimal. Then $e_1, \dots e_s$ form the entries of row $i+1$ in
$s_i(A)$, while the remaining entries in $A_{i+1}$ are added to
$c_1, \dots, c_s$ (and rearranged into weakly increasing order) to
form the entries of row $i$ in $s_i(A)$.

\noindent {\bf Case 2}: $s > t$.  Then $s_i(A)$ is
defined if $d_{s+1-i} < c_{t+1-1}$ for $i = 1,\dots,t$.  In this
case, we choose $e_1, \dots, e_t$ from $c_1, \dots c_s$ so that $e_i
> d_i$ and $\sum_{i=1}^t e_i -d_i$ is minimal. Then $e_1,
\dots, e_t$ form the entries of row $i$ in $s_i(A)$, while the
remaining elements from row $i$ are added to $d_1, \dots, d_t$ (and
rearranged into weakly increasing order) to form the entries of row
$i+1$ in $s_i(A)$.

For example we can apply row swapping to rows $2$ and $3$ of
\[
A = \Diagram{6 & 7 \cr 3 & 5 \cr 1 & 2 & 4\cr}
\]
and we get
\[
s_2(A) = \Diagram{6 & 7 \cr 1 & 3 & 5  \cr 2 & 4 \cr}\: .
\]

The next lemma is an easy observation about the above definition, so
we omit the proof.

\begin{Lemma} \label{L:swaptwice}
Let $A \in \Tab^\leq(F)$ and suppose that $s_i(A)$ is defined.  Then $s_i(s_i(A))$
is defined and equal to $A$.
\end{Lemma}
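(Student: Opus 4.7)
The plan is to verify the lemma by direct case analysis on the definition of $s_i$. The definition distinguishes Case 1 ($s<t$) from Case 2 ($s>t$), and the operation $A \mapsto s_i(A)$ swaps the lengths of rows $i$ and $i+1$, so it interchanges the two cases. By symmetry it suffices to verify that if $A$ is in Case 1 then $s_i(s_i(A))$ is defined and equal to $A$; the Case 2 statement then follows by applying the Case 1 statement to $s_i(A)$.

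Assume $A$ is in Case 1 with row $i$ entries $c_1 \le \dots \le c_s$ and row $i+1$ entries $d_1 \le \dots \le d_t$, $s<t$. I first observe that the chosen subset $E = \{e_1 \le \dots \le e_s\} \subseteq D = \{d_1,\dots,d_t\}$ admits a backwards greedy characterization: for $j = s, s-1, \dots, 1$, take $e_j$ to be the largest among the as-yet-unchosen entries of $D$ that is strictly less than $c_j$. Optimality of this greedy follows from a standard exchange argument: any feasible $E'$ with $\sum e'_j > \sum e_j$ would allow a swap of a pair of $d$-entries inside $E$ that increases $\sum e_j$ while preserving $e_j < c_j$. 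Write $B = s_i(A)$, whose row $i$ is the sorted multiset $C \cup (D \setminus E)$, denoted $\gamma_1 \le \dots \le \gamma_t$, and whose row $i+1$ is $E$.

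The verification then proceeds in two steps. First, I check the Case 2 condition for $B$: I want $e_j < \gamma_{t-s+j}$ for $j = 1, \dots, s$. Since $c_{j+1}, \dots, c_s \in C$ each weakly exceed $c_j$, the position of $c_j$ in $\gamma$ is at most $t-s+j$, so $\gamma_{t-s+j} \ge c_j > e_j$ by the Case 1 hypothesis. Second, I apply Case 2 to $B$: the greedy rule (dual to the one for Case 1) picks, for $j = 1, \dots, s$ in forward order, the smallest remaining entry of row $i$ of $B$ strictly greater than $e_j$. The choice $c_j$ is feasible since $c_j > e_j$, and I will argue by an exchange argument parallel to the one above that this greedy forward rule forces exactly $\{c_1, \dots, c_s\}$: any deviation would produce a feasible $E$ in the original Case 1 problem with strictly larger sum, contradicting maximality of the original $E$. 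The leftover entries of row $i$ of $B$ form $D \setminus E$, which combine with $E$ to recover row $i+1$ of $s_i(B)$ equal to $D$, so $s_i(B) = A$.

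The main obstacle is the exchange/duality argument tying the greedy Case 1 pick for $A$ to the greedy Case 2 pick for $B$. Making this rigorous amounts to showing that the operation sending the maximum-sum feasible $E \subseteq D$ with $e_j < c_j$ to the minimum-sum feasible $E' \subseteq C \cup (D\setminus E)$ with $e'_j > e_j$ is an involution recovering $E' = C$. This is the combinatorial heart of the lemma; once it is in place the rest is bookkeeping about multisets of row entries.
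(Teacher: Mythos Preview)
The paper does not give a proof of this lemma at all; it simply says ``an easy observation about the above definition, so we omit the proof.'' Your proposal is the natural way to flesh out that observation, and the overall strategy is correct.

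Two small corrections. First, your symmetry reduction is phrased incorrectly: if $A$ is in Case~2 and you apply the Case~1 statement to $s_i(A)$, you get $s_i(s_i(s_i(A))) = s_i(A)$, not $s_i(s_i(A)) = A$. What you really want to say is that the Case~2 argument is the formal dual of the Case~1 argument (swap ``$<$'' with ``$>$'', ``maximal sum'' with ``minimal sum'', backward greedy with forward greedy), so it suffices to write out one direction in full.

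Second, the exchange argument you flag as the ``combinatorial heart'' can be made completely precise in one line, and it is worth doing so rather than leaving it as a parallel to the earlier exchange. The point is: if some $d_m \in D \setminus E$ satisfied $e_j < d_m < c_j$, then replacing $e_j$ by $d_m$ in the original Case~1 assignment would still satisfy $d_m < c_j$ (and all other constraints) while strictly increasing $\sum e_i$, contradicting the optimality of $E$. Hence every element of $D \setminus E$ that exceeds $e_j$ is $\ge c_j$, so at step $j$ the forward greedy is forced to pick an element equal to $c_j$, and the resulting multiset is exactly $C$.

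As an alternative viewpoint, note that the paper's proof of Lemma~\ref{swaprs} observes that in Case~2 the two-row table $s_i(A)^i_{i+1}$ is literally $\RS(A^i_{i+1})$, i.e.\ the result of RSK row-inserting row $i{+}1$ into row $i$. From that perspective Case~1 is exactly reverse row-bumping, and the involution property is the standard fact that reverse bumping undoes bumping. This is probably why the authors regarded the lemma as an easy observation.
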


Our next lemma relates row swapping to the Robinson--Schensted algorithm.

\begin{Lemma} \label{swaprs}
Let $A \in \Tab^\leq(F)$,  let $1 \le i < r_A$, and suppose
$s_i(A)$ is defined.
Then $\RS(A) = \RS(s_i(A))$.
\end{Lemma}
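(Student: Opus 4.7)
The plan is to show that $\word(A)$ and $\word(s_i(A))$ are Knuth equivalent, so that they share the same Robinson--Schensted image. Because Knuth equivalence is generated by moves on three consecutive letters and is therefore preserved by concatenation, and because $\word(A)$ and $\word(s_i(A))$ differ only in the subword coming from rows $i$ and $i+1$, the problem reduces at once to the two-row case in which $A$ consists solely of rows $A_i$ and $A_{i+1}$. Rather than exhibiting explicit Knuth moves, I intend to compute $\RS(\word(A))$ and $\RS(\word(s_i(A)))$ directly, which suffices since two words with the same RS image are Knuth equivalent.

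Treating Case~1 first (so $s < t$ and $d_j < c_j$ for $j = 1, \dots, s$), I will compute $\RS(c_1 \cdots c_s d_1 \cdots d_t)$. After inserting the $c_j$'s one has the single row $[c_1, \dots, c_s]$; I will then prove by induction on $k \leq s$ that, after inserting $d_1, \dots, d_k$, the bottom row is $[d_1, \dots, d_k, c_{k+1}, \dots, c_s]$ and the row above is $[c_1, \dots, c_k]$. The inductive step uses the Case~1 hypothesis $d_{k+1} < c_{k+1}$ together with $d_j \leq d_{k+1}$ for $j \leq k$ to identify $c_{k+1}$ as the leftmost entry of the current bottom row exceeding $d_{k+1}$, so $d_{k+1}$ bumps $c_{k+1}$, which then appends to $[c_1, \dots, c_k]$. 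The remaining $d_{s+1}, \dots, d_t$ simply append to the bottom, and the final tableau is $A$.

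For $\RS(\word(s_i(A)))$ in Case~1, let $d'_1 \leq \cdots \leq d'_{t-s}$ be the $d$-entries not chosen as some $e_j$ and let $f_1 \leq \cdots \leq f_t$ be the weakly increasing merge of $c_1, \dots, c_s$ with $d'_1, \dots, d'_{t-s}$; then $\word(s_i(A)) = f_1 \cdots f_t e_1 \cdots e_s$. After inserting the $f_j$'s the bottom row is $[f_1, \dots, f_t]$, and I will show that each successive $e_i$ bumps the entry at $c_i$'s position in $f$, so that the final tableau is again $A$. The combinatorial heart of the argument is the inequality $d'_k \leq e_i$ whenever $d'_k < c_i$. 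To establish this I will observe that the $e_j$'s may be produced by the top-down greedy rule $e_j = \max\{d : d < c_j \text{ and } d \text{ not yet used for } e_{j+1}, \dots, e_s\}$; this rule automatically yields a weakly-increasing sequence, because $e_j < c_j \leq c_{j+1}$ makes $e_j$ itself a candidate for $e_{j+1}$ and hence $e_{j+1} \geq e_j$ by maximality, and by a standard exchange argument it achieves the maximum $\sum e_j$. Since an unused $d'_k$ is available at every step, if $d'_k < c_i$ then $d'_k$ is a valid candidate for $e_i$, so maximality forces $e_i \geq d'_k$. Hence all entries of the current bottom row lying to the left of $c_i$'s position (the $e_j$'s with $j < i$ and the $d'_k$'s with $d'_k < c_i$) are $\leq e_i$, while $c_i > e_i$, so $c_i$ is bumped as claimed.

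Case~2 ($s > t$) will be handled by reduction to Case~1 via Lemma~\ref{L:swaptwice}: I will verify that if $A$ satisfies the Case~2 hypotheses then $s_i(A)$ satisfies the Case~1 hypotheses, and then apply the already-established Case~1 identity to $s_i(A)$ to deduce $\RS(\word(s_i(A))) = \RS(\word(s_i(s_i(A)))) = \RS(\word(A))$. I expect the main obstacle to be the greedy-maximality argument in Case~1; the subtlety lies in ensuring that the weakly-increasing constraint on the $e_j$'s does not obstruct the exchange step, which as indicated above is resolved by the fact that this constraint is automatic for the top-down greedy construction.
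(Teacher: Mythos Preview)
Your argument is correct and follows essentially the same route as the paper, with the roles of the two cases reversed: the paper treats Case~2 as primary, observing that $\RS(A^i_{i+1})$ is precisely $s_i(A)^i_{i+1}$ (stated as ``easy to check''), and then deduces Case~1 from Lemma~\ref{L:swaptwice}; you treat Case~1 as primary and deduce Case~2 from Lemma~\ref{L:swaptwice}. Your detailed computation of $\RS(\word(s_i(A)))$ in Case~1 is exactly the paper's Case~2 claim applied to $B = s_i(A)$, so your greedy/exchange argument supplies the verification the paper omits.
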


\begin{proof}
It is easy to check that, in Case 2 above, $\RS(A^i_{i+1})$ is
precisely $s_i(A)^i_{i+1}$. It follows that $\word(A)$
is Knuth equivalent to $\word(s_i(A)))$, so that
$\RS(A) = \RS(s_i(A))$.

In order to deal with Case 1 above, we use Lemma~\ref{L:swaptwice} and apply
the result for Case 2 to $s_i(A)$.
\end{proof}

The following is the main technical lemma required for the proof of Theorem
\ref{recs}.

\begin{Lemma} \label{recs1}
Let $F$ be preconvex, and let $A \in \Tab^\leq(F)$ be such that $A$ is row equivalent to
column strict.
\begin{enumerate}
\item[(i)] Suppose also that $A^1_{i+1}$
is convex, and the length of row $i$ is greater than the length of
row $i+1$.  Then $s_i(A)$ is defined and is row
equivalent to column strict.
\item[(ii)]  Suppose also that $A^i_{r_A}$
is convex, and the length of row $i$ is less than the length of row
$i+1$.  Then $s_i(A)$ is defined and is row
equivalent to column strict.
\end{enumerate}
\end{Lemma}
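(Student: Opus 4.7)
I focus on part (ii); part (i) is dual. Let $c_1 \le \cdots \le c_s$ and $d_1 \le \cdots \le d_t$ denote the sorted entries of rows $i$ and $i+1$ of $A$, so that $s < t$ here.

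First I show $s_i(A)$ is defined. Convexity of $A^i_{r_A}$ combined with $s < t$ forces row $i$ to lie above $s$ consecutive columns of row $i+1$. Picking a column strict $B \in \bar{A}$, the column strictness between $B_i$ and $B_{i+1}$ yields an injection from the $c$-multiset to the $d$-multiset with each pair strictly decreasing. A pigeonhole argument then gives $c_j > d_j$ for all $j \le s$: if instead $c_m \le d_m$ for some $m$, then the $m$ smallest $c$'s would all map to $d$-values strictly below $d_m$, but only $m-1$ such values exist among the $d$'s, a contradiction. This is precisely the Case~1 condition for $s_i(A)$ to be defined.

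Next I construct a column strict $B' \in \bar{s_i(A)}$. Starting from a column strict $B \in \bar{A}$, I first rearrange $B_{i+1}$ within $\bar{A}$, preserving column strictness of $B$, so that the $s$ entries of $B_{i+1}$ sitting directly below row $i$ form precisely the optimal multiset $\{e_1, \dots, e_s\}$ used in the definition of $s_i$. I then perform the swap at the level of $B$: the $s$ overlapping entries of $B_{i+1}$ become the entries of the new row $i$ in those columns, the entries of $B_i$ become the entries of the new row $i+1$, and the $t - s$ non-overlapping entries of $B_{i+1}$ stay in their columns, populating the extended portion of the new row $i$. Column strictness of $B'$ between the two swapped rows is inherited directly from $B$, while column strictness across the gaps at rows $i-1$ and $i+2$ follows from the ``over the gaps'' column strictness clause applied to $B$.

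The principal technical step is the preliminary rearrangement of $B_{i+1}$. I would handle it by an exchange argument: if the sum of $B_{i+1}$'s entries in the overlapping columns is strictly less than $\sum_j e_j$, the optimality of $\{e_j\}$ lets me swap an overlapping entry with a non-overlapping entry of $B_{i+1}$ to strictly increase that sum, without breaking column strictness with either $B_i$ above or $B_{i+2}$ below; the process terminates in finitely many steps. For part (i), convexity of $A^1_{i+1}$ and $s > t$ force row $i+1$ to lie below $t$ consecutive columns of row $i$; the analogous pigeonhole argument yields the Case~2 condition $c_{s-t+j} > d_j$ for $j = 1, \dots, t$, and a dual swap-and-rearrange construction produces the required column strict $B' \in \bar{s_i(A)}$.
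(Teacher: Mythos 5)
Your description of the swap puts the entries in the wrong rows. In Case~1 of the definition of $s_i$ (the case $s<t$ that you are in), $e_1,\dots,e_s$ become the entries of row $i+1$ of $s_i(A)$, and row $i$ of $s_i(A)$ consists of $c_1,\dots,c_s$ together with the leftover $d$'s. You instead declare that the overlapping $B_{i+1}$-entries (after your rearrangement, the $e_j$'s) ``become the entries of the new row $i$'' while ``the entries of $B_i$ become the entries of the new row $i+1$.'' This $B'$ has row multisets $\{d_1,\dots,d_t\}$ in row $i$ and $\{c_1,\dots,c_s\}$ in row $i+1$, so $B'\notin\bar{s_i(A)}$; moreover, since $e_j<c_j$, your $B'$ is not even column strict. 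What you want is to leave rows $i,i+1$ unchanged in the overlapping columns and only slide the $t-s$ boxes of $B_{i+1}$ in the non-overlapping columns up into row $i$.

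More seriously, the exchange argument does not hold as stated: you assert that a swap inside $B_{i+1}$ raising the overlapping sum can always be found ``without breaking column strictness with either $B_i$ above or $B_{i+2}$ below,'' but this is false. Take $F$ left-justified with row lengths $1,2,2$, take $i=1$, and let
\[
A = \Diagram{4 \cr 1 & 3 \cr 0 & 2 \cr},
\]
which is itself column strict. Here $c_1=4$, $d_1=1$, $d_2=3$, so $e_1=3$; the only way to make the overlapping entry of row $2$ equal $3$ is the swap $1\leftrightarrow 3$ in row $2$, which forces column $2$ to read $1$ above $2$ and breaks column strictness with row $3$. One must also permute row $3$ to $(2,0)$, and in general such a swap must be propagated through all the lower rows (in part~(ii)) or the upper rows (in part~(i)). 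This cascade of column exchanges is exactly where the convexity of $A^i_{r_A}$ (resp.\ $A^1_{i+1}$) enters---it guarantees there are no column gaps to obstruct the propagation---and it is the technical heart of the paper's proof, which modifies the column-strict representative one exchange at a time while tracking which entries of row $i$ get bumped during the Robinson--Schensted insertion of $\word(A^i_{i+1})$. Your pigeonhole step for definedness and the overall plan (rearrange $B_{i+1}$ to realize the $e_j$'s, then move the spare boxes) are reasonable, but the sketch assumes away precisely this propagation.
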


\begin{proof}
We only prove part (i) as the proof of part (ii) is very similar.  We can assume $F$ is
left justified because $A$ is row equivalent to column strict if and only if
$l(A)$ is.

We first prove the lemma in the case that row $i+1$ has 1 box and
row $i$ has 2 boxes.  Let $\tilde A \in \bar A$ be column strict.
Denote the entries in column 1 of $\tilde A$ by $a_1, \dots,
a_{r(A)}$ and the entries in column 2 of $\tilde A$ above row $i$ by
$b_j, \dots, b_i$, where $1 \le j \le i$, $b_{j-1}$ does not exist,
and $a_k, b_k$ lie in row $k$. Since $A^1_{i+1}$ is convex we have
that $b_k$ is defined for all $k$ between $j$ and $i$.

As $\tilde A$ is column strict we have $a_{i+1} < a_i$, which
implies that $s_i(A)$ is defined.  Also we have that
$s_i(A)^i_{i+1}$ is $\RS(A^i_{i+1})$, so we consider
applying the Robinson--Schensted algorithm to $\word(A_i^{i+1})$.

If $a_{i+1}$ bumps $a_i$, then it is easy to see that
$s_i(A)$ is obtained from $A$ by simply moving the box
containing $b_i$ down to row $i+1$.   Then define $\tilde A' \in
\Row(s_i(A))$ to be obtained from $\tilde A$ by moving the
box containing $b_i$ down to row $i+1$.  It is clear that $\tilde
A'$ is column strict meaning that $s_i(A)$ is row
equivalent to column strict.

If $a_{i+1}$ bumps $b_i$, then we generate a new column strict table
$\hat A \in \bar A$ as follows. First swap the boxes containing
$a_i$ and $b_i$. The resulting diagram is now column strict except
$a_i$ may be larger than $b_{i-1}$. If this occurs, then also swap
the boxes containing $a_{i-1}$ and $b_{i-1}$. Again the resulting
diagram is column strict, except $a_{i-1}$ may be larger than
$b_{i-2}$. If this occurs, then swap the boxes containing $a_{i-2}$
and $b_{i-2}$. We keep repeating this process until we obtain a
column strict diagram, which must eventually happen, as $b_{j-1}$ does not
exist. Now the resulting diagram satisfies the first case, that is
the new $a_{i+1}$ bumps the new $a_i$, so $s_i(A)$ is row
equivalent to column strict.

Next we consider the general case.   Let $\tilde A \in \Row(A)$ be
column strict, and denote the entries of row $i$ of $\tilde A$ by
$c_1, \dots, c_r$ and the entries of row $i+1$ of $\tilde A$ by
$d_1, \dots, d_s$.  Suppose that when applying the
Robinson--Schensted algorithm to $\word(A^i_{i+1})$, there
exists $j,t$ such that
$1 \le
j \le s$ and $t > s$, and such that $d_j$ bumps $c_t$.
Then we form another table $\hat A \in \bar A$ as follows.
We begin by swapping the boxes containing $c_t$ and $c_j$ in $\tilde
A$.  The resulting diagram is column strict except that the entry in
the $(i-1)$th row and the $t$th column may not be greater than
$c_j$.  We can continue to swap boxes between the $j$th and $t$th
column, as in the above case to obtain $\hat A$.  Now in $\hat A$
one more of the first $s$ boxes of the $i$th row is bumped during
the application of the Robinson--Schensted algorithm to
$\word(A^i_{i+1})$ than is the case for $\tilde A$.  It follows that
we can assume, by induction, that $c_1, \dots, c_s$ are the elements
which are bumped in the Robinson--Schensted algorithm.  In this
situation, we can define $\tilde A' \in \bar{(s_i(A))}$ to
be obtained from $\tilde A$ by simply moving the boxes containing
$c_{s+1}, \dots, c_r$ down to row $i+1$.  It is clear that $\tilde
A'$ is column strict, so that $s_i(A)$ is row equivalent to
column strict.
\end{proof}

\begin{Lemma}  \label{swapdef}
Let $A \in \Tab^\leq(F)$ and $1 \le i < r_A$, and suppose that
$\part(\RS(A)) =
\part(F)$. Then $s_i(A)$ is defined.
\end{Lemma}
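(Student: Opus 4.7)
The plan is to argue by contradiction using Lemma~\ref{altRS}, the alternative characterization of $\part(\RS(w))$ in terms of disjoint weakly increasing subsequences. Write $\bp = \part(F) = \part(\RS(A))$ in weakly decreasing order as $(p_1 \ge p_2 \ge \dots \ge p_{r_F})$. The rows of $A$ themselves form $r_F$ disjoint weakly increasing subsequences of $\word(A)$ whose multiset of lengths equals $\bp$, so they already attain the bound $\ell(\word(A),k) = p_1 + \dots + p_k$ for every $k$. Assuming that $s_i(A)$ is undefined, I will build a new family of $k$ disjoint weakly increasing subsequences of $\word(A)$ whose total length equals $p_1 + \dots + p_k + 1$ for a suitable $k$, contradicting Lemma~\ref{altRS}.

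Let $c_1 \le \dots \le c_s$ denote the entries of row $i$ of $A$ and $d_1 \le \dots \le d_t$ those of row $i+1$. In Case~1 ($s < t$), the failure of the defining inequalities yields some $j$ with $1 \le j \le s$ and $d_j \ge c_j$; then $c_1, \dots, c_j, d_j, d_{j+1}, \dots, d_t$ is a weakly increasing subsequence of $\word(A)$ of length $t+1$ (note that row $i$ precedes row $i+1$ in $\word(A)$). I would then take $k$ to be the number of rows of $F$ of length at least $t$, so that the top $k$ rows include row $i+1$ but exclude row $i$ (since $s < t$). Replacing row $i+1$ in this family by the length-$(t+1)$ subsequence gives $k$ disjoint weakly increasing subsequences of total length $p_1 + \dots + p_k + 1$. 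Case~2 ($s > t$) is dual: the failure yields $k_0$ with $1 \le k_0 \le t$ and $d_{k_0} \ge c_{s-t+k_0}$, from which $c_1, \dots, c_{s-t+k_0}, d_{k_0}, \dots, d_t$ is a weakly increasing subsequence of length $s+1$. Choosing $k'$ to be the number of rows of length at least $s$ and replacing row $i$ by this new subsequence produces the analogous contradiction.

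The main technical point is the bookkeeping in the ``top $k$ rows'' step: one must check that the chosen rows together with the lengthened subsequence really form a pairwise disjoint family. Disjointness holds because the borrowed entries $c_1, \dots, c_j$ (respectively $d_{k_0}, \dots, d_t$) come from the row that has been deliberately excluded from the top $k$ (respectively top $k'$), and the remaining, unused entries of the excluded row are simply discarded. The degenerate case $s = t$, not explicitly listed in the definition of $s_i(A)$, is handled by the same construction as Case~1, with the tie in row lengths broken so that row $i+1$ precedes row $i$ when forming the top $k$ family.
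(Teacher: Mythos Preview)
Your proof is correct and follows essentially the same approach as the paper: argue by contradiction, produce from the failure of the defining inequalities a weakly increasing subsequence in $\word(A^i_{i+1})$ of length one more than the longer of the two rows, combine it with the other long rows of $A$, and invoke Lemma~\ref{altRS}. The paper's own proof is a terse three-line version of exactly this; your write-up simply makes the bookkeeping explicit (which rows to keep, why the family stays disjoint), and your choice of $k$ as the number of rows of length at least $t$ (respectively $s$) is precisely the index the paper calls $j$. One minor point: your discussion of the degenerate case $s=t$ is unnecessary and slightly muddled, since the paper only defines $s_i$ when the two row lengths differ, and the lemma is only ever applied in that situation; you can safely drop that paragraph.
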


\begin{proof}
Let $\bp = (p_1 \ge \dots \ge p_n) = \part(F)$.  Let
$p_j$ be the length of $A_i$ and $p_k$ be the length
of $A_{i+1}$.  We assume the $p_j \ge p_k$, the other
case being entirely similar. Suppose $s_i(A)$ is
undefined.  Then it is easy to see that there is an increasing
subsequence in $\word(A^i_{i+1})$ of length greater than
$p_j$.  This implies that $\ell(\word(A),j) > p_1 +
\dots + p_j$, so that $\part(\RS(A)) > \part(F)$ by
Lemma~\ref{altRS}.
\end{proof}

We are now in a position to prove Theorem~\ref{recs}.

\begin{proof}[Proof of Theorem~\ref{recs}]
First we prove by induction on $r_A$ (the number of rows of $A$) that if $A$ is row
equivalent to column strict, then we can perform a sequence of row swaps
$s_{i_1},\dots,s_{i_m}$ so that $B = s_{i_1} \dots s_{i_m}(A)$ is defined and satisfies:
\begin{enumerate}
\item[(i)]  $B$ has increasing row lengths; and
\item[(ii)] $B$ is row equivalent to column strict.
\end{enumerate}
The case where $r_A = 0$ is trivial. If $A_1$ is a row of shortest
length, then it is clear that we can form a sequence of row swaps
satisfying the  above conditions for $A$ if and only if we can do so
for $A^2_{r_A}$ in which case we can apply induction. So we assume
that $A_{r_A}$ is of shortest length. In this case, let $k$ be
minimal such that $A_{k+1},\dots,A_{r_A}$ have the same length. Then
clearly $A^1_{k+1}$ is convex.  Therefore, we can apply
Lemma~\ref{recs} to see that $s_k(A)$ is defined and is row
equivalent to column strict. We continue by applying the row
swapping operations $s_{k-1}, \dots, s_1$ in turn.   Inductively, we
see that the table $A(j) = s_j \dots s_k(A)$ is defined and row
equivalent to column strict by Lemma~\ref{recs}. Now $A(1)_1$ is a
row of shortest length in $A(1)$, so we are now in the case above
where we can apply induction.  Hence, we can find the desired
sequence of row swaps.

Suppose that $A$ is row equivalent to column strict and let $B$ be the table
obtained from $A$ by a sequence of row swaps and satisfying (i) and (ii) above.
Then in fact $B$ must be column strict.  Now it is
a straightforward exercise to check that $\RS(B) = B$ so that
$\part(\RS(B)) = \part(F)$.  Hence using Lemma~\ref{swaprs}, we
have $\part(\RS(A)) = \part(F)$ as required.

Now assume that $\part(\RS(A)) = \part(F)$.  An inductive argument
very similar to above shows that we can perform a sequence of row swaps
$s_{i_1},\dots,s_{i_m}$ so that $B = s_{i_1} \dots s_{i_m}(A)$ is defined and satisfies:
\begin{enumerate}
\item[(i)]  $B$ has increasing row lengths; and
\item[(ii)] $\part(\RS(B)) = \part(F)$.
\end{enumerate}
The only adaptation required is to use Lemma~\ref{swapdef},
to see that the row swaps are defined and Lemma~\ref{swaprs} to see that
they preserve the output of the
Robinson--Schensted algorithm.
We can use
Lemma~\ref{altcRS} to see that $B$ must be column strict.
Now by performing all of the row swapping operations in reverse and
applying Lemma~\ref{recs1}, we see that $A$ is row equivalent to
column strict.
\end{proof}

Suppose $F$ is convex and let $\bar A \in \Row(F)$.  Below we
explain a process to determine if there exists column strict $B \in
\bar A$ and to find such $B$ if it exists.

\begin{Algorithm} \label{A:findcs}
Suppose $F$ is convex and let $\bar A \in \Row(F)$.
Let $a_1 \leq a_2 \leq \dots \leq a_m$ be all the entries of $A$.  We proceed in
steps, after the first $(i-1)$ steps we have inserted $a_1,\dots,a_{i-1}$
in $F$.

\noindent {\em $i$th step}: We consider $a_i$ and suppose that it
lies in row $j_i$.  We consider all empty boxes $b$ in row $j_i$ for
which either there is no box below $b$ or the box below $b$ has
already been filled.

\noindent {\em If no such box exists}, then we output that $A$ is not row equivalent to column strict and finish.

\noindent {\em Otherwise}, from all such $b$ we choose the one with the most boxes above it, and the rightmost one if there is more than one such $b$.  We insert $a_i$ in to this box.

\noindent After the $m$th step a column strict element of $\bar A$
is output.
\end{Algorithm}

Before we argue that this algorithm is correct, we illustrate it
with an example.  We consider
\[
A = \Diagram{6  & 9 \cr 2 & 3 & 5 & 8\cr 1 & 7 \cr 4 \cr}.
\]
By applying Algorithm~\ref{A:findcs}, we get the following sequence
\[
\Diagram{ &    \cr  &  &  & \cr  &  1 \cr  \cr}, \qquad %
\Diagram{ &    \cr  & 2 &  & \cr  & 1 \cr  \cr}, \qquad %
\Diagram{ &    \cr  & 2 &  & 3\cr  & 1 \cr  \cr}, \qquad %
\Diagram{ &    \cr  & 2 &  & 3\cr  & 1 \cr 4 \cr}, \qquad %
\Diagram{ &    \cr  & 2 & 5 & 3 \cr  & 1 \cr 4 \cr}, \qquad %
\]
\[
\Diagram{ & 6   \cr  & 2 & 5 & 3 \cr  & 1 \cr 4 \cr}, \qquad %
\Diagram{ & 6   \cr  & 2 & 5 & 3 \cr 7 & 1 \cr 4 \cr}, \qquad %
\Diagram{ & 6   \cr 8 & 2 & 5 & 3 \cr 7 & 1 \cr 4 \cr}, \qquad %
\Diagram{ 9 & 6 \cr 8 & 2 & 5 & 3 \cr 7 & 1 \cr 4 \cr}. \qquad %
\]

\begin{proof}[Proof of correctness of Algorithm~\ref{A:findcs}]
We show by induction on $i$ that if there exists column strict $B
\in \bar A$, then there is such $B$ with boxes as filled by the
first $i$ steps of the algorithm.  The case $i =0$ is trivial.

Consider the $i$th step.  If it is not possible to find an empty
box $b$ in row $j_i$ for which either there is no box below $b$ or
the box below $b$ has already been filled, then it is clear that $A$
cannot be row equivalent to column strict.  Now suppose that $A$ is
row equivalent to column strict and that $B \in \bar A$ is column
strict.  By induction we may assume that boxes in $B$ containing
$a_1,\dots,a_{i-1}$ are filled as in the first $i-1$ steps.  Let $b$
be the box that the algorithm says to put $a_i$ in, and let $b'$ be
the box in $B$ containing $a_i$.  Now as in the proof of Lemma
\ref{recs1}, we may swap entries of $B$ in the columns containing
$b$ and $b'$ to obtain column strict $B' \in \bar A$ with $a_i$ in
$b$.  This completes the induction.
\end{proof}

\subsection{s-frames and s-tables} \label{ss:stable}

To manage the data associated to nilpotent elements in classical Lie
algebras in the next section we require a symmetric version of
frames and tables. We define an {\em s-frame} to be a frame where
the boxes, are arranged symmetrically around a central point. For
this paper, we only consider s-frames with an even number of rows.
We say that an s-frame is a {\em symmetric pyramid} if the row
lengths increase from the centre outwards; we note that a symmetric
pyramid is uniquely determined by its row lengths.

An example of an s-frame (which is not a symmetric pyramid) is
\begin{equation*}
    \begin{array}{c}
\begin{picture}(60,80)
\put(0,0){\line(1,0){60}} \put(0,20){\line(1,0){60}}
\put(10,40){\line(1,0){40}} \put(0,60){\line(1,0){60}}
\put(0,80){\line(1,0){60}} \put(0,0){\line(0,1){20}}
\put(20,0){\line(0,1){20}} \put(40,0){\line(0,1){20}}
\put(60,0){\line(0,1){20}}
\put(10,20){\line(0,1){40}} \put(30,20){\line(0,1){40}}
\put(50,20){\line(0,1){40}}
\put(0,60){\line(0,1){20}} \put(20,60){\line(0,1){20}}
\put(40,60){\line(0,1){20}} \put(60,60){\line(0,1){20}}
\put(30,40){\circle*{3}}
\end{picture}
\end{array}.
\end{equation*}

We define an {\em s-table} to be an s-frame filled with integers
skew-symmetrically with respect to the centre. Given an $s$-frame
$F$, we write $\sTab(F)$ for the set of s-tables with frame $F$. We
write $\bar A^s = \bar A \cap \sTab(A)$ for the set of s-tables row
equivalent to $A$. The subset of $\sTab(F)$ consisting of s-tables
with entries weakly increasing along rows is denoted by
$\sTab^\leq(F)$. For example

\begin{equation} \label{exstable}
\begin{picture}(80,80)
\put(20,0){\line(1,0){40}} \put(0,20){\line(1,0){80}}
\put(0,40){\line(1,0){80}} \put(0,60){\line(1,0){80}}
\put(20,80){\line(1,0){40}} \put(20,0){\line(0,1){20}}
\put(40,0){\line(0,1){20}} \put(60,0){\line(0,1){20}}
\put(60,20){\line(0,1){20}} \put(80,20){\line(0,1){20}}
\put(60,40){\line(0,1){20}} \put(80,40){\line(0,1){20}}
\put(20,80){\line(0,-1){20}} \put(40,80){\line(0,-1){20}}
\put(60,80){\line(0,-1){20}} \put(0,20){\line(0,1){40}}
\put(20,20){\line(0,1){40}} \put(40,20){\line(0,1){40}}
\put(30,70){\makebox(0,0){{-7}}}
\put(50,70){\makebox(0,0){{3}}} \put(10,50){\makebox(0,0){{-8}}}
\put(30,50){\makebox(0,0){{-4}}} \put(50,50){\makebox(0,0){{2}}}
\put(70,50){\makebox(0,0){{5}}} \put(8,30){\makebox(0,0){{-5}}}
\put(28,30){\makebox(0,0){{-2}}} \put(48,30){\makebox(0,0){{4}}}
\put(68,30){\makebox(0,0){{8}}} \put(28,10){\makebox(0,0){{-3}}}
\put(48,10){\makebox(0,0){{7}}}
\put(40,40){\circle*{3}}
\end{picture}
\end{equation}
is an s-table, which lies in $\sTab^\leq(F)$, where $F$ is its s-frame.

Let $F$ be an s-frame and $A \in \sTab(F)$.  By assumption, $F$ has
an even number of rows, say $2r$. We label the rows of $F$ and $A$
with $-r, \dots, -1,1, \dots, r$ from top to bottom. Given $i = \pm
1,\dots,\pm r$ we write $A_i$ for row of $A$ labelled by $i$, and
for $i > 0$ we write $A^{-i}_i$ for the s-table obtained by removing
rows $\pm 1,\dots, \pm (i-1)$. The table obtained from $A$ by
removing all boxes below the central point is denoted by $A^+$.  For
example if $A$ is the table above, then
\begin{equation*}
    A^+ =
    \begin{array}{c}
\begin{picture}(80,40)
\put(0,0){\line(0,1){20}} \put(20,0){\line(0,1){40}}
\put(40,0){\line(0,1){40}} \put(60,0){\line(0,1){40}}
\put(80,0){\line(0,1){20}}
\put(0,0){\line(1,0){80}}
\put(0,20){\line(1,0){80}} \put(20,40){\line(1,0){40}}
\put(30,30){\makebox(0,0){{-7}}}
\put(50,30){\makebox(0,0){{3}}} \put(10,10){\makebox(0,0){{-8}}}
\put(30,10){\makebox(0,0){{-4}}} \put(50,10){\makebox(0,0){{2}}}
\put(70,10){\makebox(0,0){{5}}}
\end{picture}
\end{array}
.
\end{equation*}

In case the row lengths of $A$ all have the same parity, then $A$ is
justified, thus there is a natural notion of $A$ being row
equivalent to column strict as an s-table. In the next lemma we see
that in fact this is not a stronger requirement than being row
equivalent to column strict.

\begin{Lemma} \label{L:srecs}
Let $F$ be an s-frame such all row lengths in $F$ have the same
parity and $A \in \sTab(F)$. Then there exists column strict $B \in
\bar A^s$ if and only if $A$ is row equivalent to column strict.
\end{Lemma}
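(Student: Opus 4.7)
The $(\Leftarrow)$ direction is immediate, since every column-strict $B\in\bar A^s$ is in particular a column-strict element of $\bar A$, so $A$ is row equivalent to column strict. For $(\Rightarrow)$, the plan is to apply Algorithm~\ref{A:findcs} to $A$, obtaining a column-strict $B\in\bar A$, and then to show that $B$ is automatically skew-symmetric. Let $\sigma$ denote the $180^\circ$ central rotation of $F$. Because every row length of $F$ has the same parity, $\sigma$ fixes no box of $F$, and the skew-symmetry of $A$ reads $A(\sigma(p))=-A(p)$.

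Sort the entries of $A$ as $a_1\le a_2\le\dots\le a_m$, breaking ties so that $a_{m+1-i}=-a_i$ occupies the $\sigma$-image of the box of $a_i$ in $A$, let $j_i$ be the row of $a_i$ in $A$ (so $j_{m+1-i}=-j_i$), and write $b_i$ for the box into which the algorithm places $a_i$. The main claim, which I would prove by induction on $i$, is that $b_{m+1-i}=\sigma(b_i)$ for each $i\le \lfloor m/2\rfloor$; once this is in hand,
\[
B(\sigma(b_i))=B(b_{m+1-i})=a_{m+1-i}=-a_i=-B(b_i),
\]
so $B$ is skew-symmetric and lies in $\bar A^s$, as required.

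The inductive step rests on the observation that Algorithm~\ref{A:findcs}'s selection rule is exchanged by $\sigma$: the row $j_i$ becomes $-j_i=j_{m+1-i}$, the condition ``no box below, or box below filled'' becomes ``no box above, or box above filled'', ``most boxes above'' becomes ``most boxes below'', and ``rightmost'' becomes ``leftmost''. Granting the inductive hypothesis for all indices $<i$, the boxes $\sigma(b_1),\dots,\sigma(b_{i-1})$ are exactly $b_{m+2-i},\dots,b_m$ and in particular are the final $i-1$ boxes to be filled. From this I would deduce that the set of filled boxes in row $j_{m+1-i}$ at the moment the algorithm reaches step $m+1-i$ is precisely the $\sigma$-image of the set of filled boxes in row $j_i$ at the start of step $i$; the dualised selection rule then forces the choice $\sigma(b_i)$ at step $m+1-i$, completing the induction.

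The main obstacle is the simultaneous forward/backward bookkeeping that this induction demands: at step $i$ one must already know how the algorithm will behave at the later step $m+1-i$, which is in general determined by the interleaved processing of many rows. This will be handled by restricting attention to the two rows $j_i$ and $-j_i$ and analysing them in parallel, using that Algorithm~\ref{A:findcs}, restricted to a single fixed row, processes that row's entries in a fixed increasing order by a rule depending only on that row's own occupancy pattern together with the surrounding frame geometry; by the $\sigma$-symmetric tie-breaking and skew-symmetry of $A$, the entries of $A$ assigned to row $-j_i$ are exactly the negations of those assigned to $j_i$, and the two row-processes are $\sigma$-conjugate.
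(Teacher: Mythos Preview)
Your overall intuition—that skew-symmetry of the input should force skew-symmetry of the output—is reasonable, but the argument as sketched has a real gap. The inductive assertion ``the set of filled boxes in row $j_{m+1-i}$ at step $m+1-i$ equals the $\sigma$-image of the filled boxes in row $j_i$ at step $i$'' is false: by your own inductive hypothesis, $\sigma$ carries the filled boxes $b_1,\dots,b_{i-1}$ to $b_{m+2-i},\dots,b_m$, and these are precisely the \emph{last} boxes to be filled, hence \emph{empty} at step $m+1-i$. So under $\sigma$ the filled/empty status flips rather than being preserved, and the forward run at step $m+1-i$ is not looking at the $\sigma$-image of the configuration at step $i$.

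Relatedly, the final paragraph asserts that the algorithm's choice in a given row depends only on ``that row's own occupancy pattern together with the surrounding frame geometry''. This is not so: the candidacy condition ``the box below is already filled'' depends on the occupancy of an \emph{adjacent} row, and this is exactly what fails to dualize. Under $\sigma$ the rule ``box below filled'' becomes ``box above filled'', so the $\sigma$-conjugate of Algorithm~\ref{A:findcs} is a genuinely different (dual) algorithm, not the same one. Your induction would prove that running the dual algorithm backwards matches running the original algorithm forwards—but at step $m+1-i$ you are still running the original algorithm, and there is no reason its choice should coincide with what the dual algorithm would do on the $\sigma$-reflected picture.

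The paper sidesteps this by not trying to prove that the unmodified algorithm's output is skew-symmetric. Instead it runs a \emph{symmetrized} variant: it processes the entries in pairs $\pm a_i$ simultaneously, using the original rule of Algorithm~\ref{A:findcs} for $-a_i$ and the dual rule (leftmost, most boxes below, box above filled) for $+a_i$. The output is then skew-symmetric by construction, and one checks separately that it remains column strict by adapting the correctness argument for Algorithm~\ref{A:findcs}.
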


\begin{proof}
The only if part is trivial.  Suppose that $A$ is row equivalent to column strict,
then we explain how we can adapt Algorithm~\ref{A:findcs} to find $B \in \bar A^s$,
which is column strict.

We write $a_1 >\dots > a_m$ for the positive entries in $A$.  Then in the
$i$th step of our adaptation, we insert the entries $\pm a_i$.  To insert $-a_i$
we follow the rules in Algorithm~\ref{A:findcs} and to insert $a_i$ we look for
the empty box which is leftmost such that there is no box above it or the box above it has already
been filled, and has as many empty boxes below it as possible.
(So we are mixing Algorithm~\ref{A:findcs} with its ``dual'' version.)

The proof of correctness of Algorithm~\ref{A:findcs} can be easily modified
to show that this adapted version does insert entries in to all the boxes in $F$.
It is clear that the resulting table $B$ is an s-table.  So $B \in \bar A^s$
is column strict, as required.
\end{proof}

Last in this subsection we generalize the row swapping procedure to
s-tables. As above let $F$ be an s-frame with $2r$ rows, and let $A \in
\Tab^\leq(F)$. Let $i = 1,\dots,r-1$.  We can define the row
swapping operation $s_i$ as before, so that it swaps rows $i$ and
$i+1$. Using the same rules we can define the row swapping operation
$s_{-i}$ that swaps rows $-(i+1)$ and $-i$. Now we define the
operator $\bar s_i$, on $\sTab^\leq(F)$, to be the composition of
$s_i$ and $s_{-i}$.  We note that $s_i(A)$ is defined if and only if
$s_{-i}(A)$ is defined, and that the operators $s_i$ and $s_{-i}$
commute. If $s_i(A)$ is undefined, then we say $\bar s_i$ is
undefined on $A$.  Also we note that when $s_i$ is defined, then the
action of  $s_{-i}$ is ``dual'' to that of $s_i$, so $\bar s_i(A)$
is an s-table. An example of a row swapping operation $\bar s_i$ is
\begin{equation} \label{ex1}
\bar s_1 \cdot
    \begin{array}{c}
\begin{picture}(80,80)
\put(20,0){\line(1,0){40}} \put(0,20){\line(1,0){80}}
\put(0,40){\line(1,0){80}} \put(0,60){\line(1,0){80}}
\put(20,80){\line(1,0){40}} \put(20,0){\line(0,1){20}}
\put(40,0){\line(0,1){20}} \put(60,0){\line(0,1){20}}
\put(60,20){\line(0,1){20}} \put(80,20){\line(0,1){20}}
\put(60,40){\line(0,1){20}} \put(80,40){\line(0,1){20}}
\put(20,80){\line(0,-1){20}} \put(40,80){\line(0,-1){20}}
\put(60,80){\line(0,-1){20}} \put(0,20){\line(0,1){40}}
\put(20,20){\line(0,1){40}} \put(40,20){\line(0,1){40}}
\put(30,70){\makebox(0,0){{2}}} \put(50,70){\makebox(0,0){{5}}}
\put(10,50){\makebox(0,0){{1}}} \put(30,50){\makebox(0,0){{3}}}
\put(50,50){\makebox(0,0){{4}}} \put(70,50){\makebox(0,0){{6}}}
\put(8,30){\makebox(0,0){{-6}}} \put(28,30){\makebox(0,0){{-4}}}
\put(48,30){\makebox(0,0){{-3}}} \put(68,30){\makebox(0,0){{-1}}}
\put(28,10){\makebox(0,0){{-5}}} \put(48,10){\makebox(0,0){{-2}}}
\put(40,40){\circle*{3}}
\end{picture}
\end{array}
=
\begin{array}{c}
\begin{picture}(80,80)
\put(0,0){\line(1,0){80}}
\put(0,20){\line(1,0){80}} \put(20,40){\line(1,0){40}}
\put(0,60){\line(1,0){80}} \put(0,80){\line(1,0){80}}
\put(20,0){\line(0,1){20}} \put(40,0){\line(0,1){20}}
\put(60,0){\line(0,1){20}} \put(60,20){\line(0,1){20}}
\put(80,0){\line(0,1){20}} \put(60,40){\line(0,1){20}}
\put(80,60){\line(0,1){20}} \put(20,80){\line(0,-1){20}}
\put(40,80){\line(0,-1){20}} \put(60,80){\line(0,-1){20}}
\put(0,0){\line(0,1){20}} \put(0,60){\line(0,1){20}}
\put(20,20){\line(0,1){40}} \put(40,20){\line(0,1){40}}
\put(10,70){\makebox(0,0){{2}}} \put(30,70){\makebox(0,0){{3}}}
\put(50,70){\makebox(0,0){{5}}} \put(70,70){\makebox(0,0){{6}}}
\put(30,50){\makebox(0,0){{1}}} \put(50,50){\makebox(0,0){{4}}}
\put(28,30){\makebox(0,0){{-4}}} \put(48,30){\makebox(0,0){{-1}}}
\put(8,10){\makebox(0,0){{-6}}} \put(28,10){\makebox(0,0){{-5}}}
\put(48,10){\makebox(0,0){{-3}}} \put(68,10){\makebox(0,0){{-2}}}
\put(40,40){\circle*{3}}
\end{picture}
\end{array}
.
\end{equation}

\begin{Remark} \label{R:half}
When considering orthogonal Lie algebras in Section~\ref{S:hwclass}, we
need to consider tables where the entries are elements of $\Z + \frac{1}{2}$.
Given a s-frame $F$, we define $\sTab_{\frac{1}{2}}(F)$ to be the set of all
skew-symmetric
fillings of
$F$ by elements of $\Z + \frac{1}{2}$.  We also define $\sTab_+(F) = \sTab(F) \cup \sTab_{\frac{1}{2}}(F)$ and
$\sTab_-(F) = \sTab(F)$.  All of the definitions above
also make sense for tables in $\sTab_{\frac{1}{2}}(F)$.
\end{Remark}

\section{Highest weight theory for even multiplicity nilpotent elements}
\label{S:hwclass}

The aim of this section is to prove Theorem~\ref{T:main}.
One important tool is the algorithm of Barbasch and
Vogan to determine the associated variety of the annihilator of a
highest weight module in a classical Lie algebra; this is discussed
in \S\ref{ss:BV},  and we give a slight modification in
Algorithm~\ref{A:BV2}. This algorithm is used in conjunction with
Corollary~\ref{C:BGKconj} and Theorem \ref{recs}
to help prove
Theorem~\ref{T:main}.

\subsection{Notation} \label{ss:hwnotn}

We recap some of the notation given in the introduction and give
explicit choices for the notation from \S\ref{ss:Wdef}.

We fix a positive integer $n$ and a sign $\phi \in \{\pm\}$.  Let $V
= \C^{2n}$ be the $2n$-dimensional vector space with standard basis
$\{e_{-n},\dots,e_{-1},e_1,\dots,e_n\}$ and nondegenerate bilinear
form $(\cdot,\cdot)$ defined by $(e_i,e_j) = 0$ if $i$ and $j$ have
the same sign, and $(e_i,e_{-j}) = \delta_{i,j}$, $(e_{-i},e_j) =
\phi \delta_{i,j}$ for  $i,j \in \{1,\dots,n\}$.  Let $\tilde G =
G^\phi_{2n} = \{x \in \GL_{2n} \mid (xv,xv') = (v,v') \text{ for all
} v,v' \in V\}$, and $\g = \g_{2n}^\phi = \{x \in \gl_{2n} \mid
(xv,v') = -(v,xv') \text{ for all } v,v' \in V\}$ be the Lie algebra
of $\tilde G$. So $\tilde G = \O_{2n}$ and $\g = \so_{2n}$ if $\phi
= +$, and $\tilde G = \Sp_{2n}$ and $\g = \sp_{2n}$ if $\phi = -$.
We write $G$ for the derived group of $\tilde G$, so $G = \tilde G$
in the type C case, and $G = \SO_{2n}$ in the type D case; note that
equivalently $G$ is the identity component of $\tilde G$.

Let $\{e_{i,j} \mid i,j = -n,\dots,-1,1,\dots,n\}$ be the standard
basis of $\gl_{2n}$, and define $f_{i,j} = e_{i,j} -
\eta_{i,j} e_{-j,-i}$ where $\eta_{i,j} = 1$ if $i$ and $j$ have the
same sign and $\eta_{i,j} = \phi$ is $i$ and $j$ have different
signs.  Then the standard basis of $\g$ is $\{ f_{i,j} \mid i + j <
0\}$ if $\phi = +$ and $\{ f_{i,j} \mid i + j \leq 0\}$ if $\phi =
-$. Let $\t = \lan f_{i,i} \mid i = 1,\dots,n \ran$ be the standard
Cartan subalgebra of $\g$ of diagonal matrices. We define
$\{\epsilon'_i \mid i = 1,\dots,n\}$ to be the basis of $\t^*$ dual to
$\{f_{i,i} \mid i = 1,\dots,n\}$ and let $\epsilon_i =
-\epsilon'_i$.

Recall that $W$ denotes the Weyl group of $\g$ with respect to $\t$.
Let $T$ be the maximal torus of $\tilde G$ corresponding to $\t$, and let
$\tilde W = N_{\tilde G} (T)/T$.
Note that in the case $\mf{g} = \mf{so}_{2n}$,  $\tilde W$ is a
Coxeter group of type $\mathrm C_n$ which contains $W$ as a subgroup
of index $2$.

We recall that nilpotent $\tilde G$-orbits in $\g$ are parameterized by
partitions $\bp$, such that each even (respectively odd) part of $\bp$ has even
multiplicity when $\g = \so_{2n}$ (respectively $\sp_{2n}$).  For $\g = \so_{2n}$,
we also recall that a nilpotent $\tilde G$-orbit parameterized by $\bp$ is a
single $G$-orbit unless all parts of $\bp$ are even and of even
multiplicity.  In this latter case, where we say that $\bp$ is {\em
very even}, and the $\tilde G$-orbit parameterized by $\bp$ splits into two
$G$-orbits.

We recall the structure of the component group $\tilde C$ of the
centralizer of $e$ in $\tilde G$.  Suppose $e \in \g$ lies in the
nilpotent $\tilde G$-orbit corresponding to the partition $\bp$.
Then $\tilde C \iso \Z_2^d$, where $d$ is the number of distinct
even parts of $\bp$ if $\g = \sp_{2n}$ and the number of distinct
odd parts of $\bp$ if $\g = \so_{2n}$, see for example
\cite[\S3.13]{Ja}.  We write $C$ for the component group of the
centralizer of $e$ in $G$.  We note that $C$ is equal to $\tilde C$
unless $\g = \so_{2n}$ and $\bp$ has an odd part, in which case $C$
has index 2 in $\tilde C$.

For the remainder of the paper we fix an even multiplicity partition
$\bp = (p_1^2,\dots,p_r^2)$ of $2n$, where $p_i \ge p_{i+1}$ for
each $i$. The {\em symmetric pyramid} of $\bp$ is defined in the
introduction, or equivalently it is the symmetric pyramid with row
lengths given by $\bp$ as defined in \S\ref{ss:stable}. The table
with frame the symmetric pyramid of $\bp$ and with boxes filled by
$-n,\dots,-1,1,\dots,n$ from left to right and top to bottom is
called the {\em coordinate pyramid associated to $\bp$} and denoted
by $\coord(\bp)$; an example of a coordinate pyramid is given in the
introduction.

More generally we say an s-frame $F$ is associated to $\bp$ if
$\part(F) = \bp$. We define the {\em coordinate table with frame
$F$} to be the element $\coord(F) \in \sTab^\leq(F)$ with boxes
filled by $-n, \dots, -1, 1, \dots, n$ such that
\begin{itemize}
\item we obtain $\coord(F)$ from  $\coord(\bp)$ by
rearranging rows and keeping entries in the same boxes, and
\item whenever $F$ has 2 rows of the same length, then the entries in the lower row of $\coord(F)$
are greater than those in the higher row.
\end{itemize}
It is easy to see that these conditions determine $\coord(F)$
uniquely.
For example
\begin{equation*}
    \begin{array}{c}
\begin{picture}(60,80)
\put(0,0){\line(1,0){60}} \put(0,20){\line(1,0){60}}
\put(10,40){\line(1,0){40}} \put(0,60){\line(1,0){60}}
\put(0,80){\line(1,0){60}} \put(0,0){\line(0,1){20}}
\put(20,0){\line(0,1){20}} \put(40,0){\line(0,1){20}}
\put(60,0){\line(0,1){20}}
\put(10,20){\line(0,1){40}} \put(30,20){\line(0,1){40}}
\put(50,20){\line(0,1){40}}
\put(0,60){\line(0,1){20}} \put(20,60){\line(0,1){20}}
\put(40,60){\line(0,1){20}} \put(60,60){\line(0,1){20}}
\put(30,40){\circle*{3}}
\put(8,70){\makebox(0,0){{-3}}}
\put(28,70){\makebox(0,0){{-2}}}
\put(48,70){\makebox(0,0){{-1}}}
\put(18,50){\makebox(0,0){{-5}}}
\put(38,50){\makebox(0,0){{-4}}}
\put(20,30){\makebox(0,0){{4}}}
\put(40,30){\makebox(0,0){{5}}}
\put(10,10){\makebox(0,0){{1}}}
\put(30,10){\makebox(0,0){{2}}}
\put(50,10){\makebox(0,0){{3}}}
\end{picture}
\end{array}
\end{equation*}
is a coordinate table.

Let $F$ be an s-frame associated to $\bp$ and $\coord(F)$ be the
coordinate table with frame $F$. In order to define some elements
and subalgebras of $\g$ associated to $\bp$, we need to fix an
explicit embedding of $\coord(F)$ in the plane. To do this we
declare that the central point is the origin and the boxes have size
$2 \times 2$.  Then given $i \in \{\pm 1, \dots, \pm n\}$, let
$\col(i)$ be the $x$-coordinate of the centre of the box labelled by
$i$, however we use $\row(i)$ to denote the row containing $i$ as
indicated by the labelling of rows from \S\ref{ss:stable}.

We define the nilpotent element $e$ with Jordan type $\bp$ by $e =
\sum f_{i,j}$, where we sum over all $i,j$ such that $i,j$ are
positive and $j$ is in the box immediately to the right of $i$, and
define $h  = \sum_{i=1}^n - \col(i) f_{i,i}$.  For the example above we
have $e = f_{1,2} + f_{2,3} + f_{4,5}$ and $h =
2f_{1,1}-2f_{3,3}+f_{4,4}-f_{5,5}$.
Now the $\ad h$ eigenspace decomposition is given by
\begin{equation*}
\g(k) = \lan
f_{i,j} \mid \col(j) - \col(i) = k \ran,
\end{equation*}
and we can find $f \in \g(-2)$ such that $(e,h,f)$ is an $\sl_2$-triple.
We define the subspaces $\p$, $\h$, $\n$ and $\k$ of $\g$
as in \S\ref{ss:Wdef}, and now we have
$$
\p = \lan f_{i,j} \mid \col(i) \le \col(j) \ran, \qquad \n = \lan
f_{i,j} \mid \col(i) > \col(j) \ran,
$$
$$
\h = \lan f_{i,j} \mid \col(i) = \col(j) \ran , \qquad \k = \lan
f_{i,j} \mid \col(i) = \col(j) +1 \ran.
$$
This gives all the information needed to define the finite $W$-algebra $U(\g,e)$ as in
\S\ref{ss:Wdef}.

We note that $\t$, $e$ and $\h$ are chosen so that $h \in \t$ and
that $\t^e$ is a maximal toral subalgebra  of $\g^e$; $\t^e$ has a basis
given by the elements $\sum_{\row(j) = i} f_{j,j}$ as $i$ ranges over
the rows in the lower half of $F$.  The Levi subalgebra $\g_0$ is
spanned by $\{f_{i,j} \mid \row(i) = \row(j)\}$. We have $\b_0 = \p_0
= \p \cap \g_0$ is a Borel subalgebra of $\g_0$.

The definitions above only depend on $\bp$ and not on the choice of frame
$F$ associated to $\bp$.  Next we define the
parabolic algebra subalgebra
\begin{equation} \label{parabolicq}
\q_F  = \lan f_{i,j} \mid  \row(i) \le \row(j)\ran.
\end{equation}
Then $\g_0$ is a Levi subalgebra of $\q_F$, and we define $\b_F$ to
be the Borel subalgebra of $\g$ generated by $\b_0$ and the
nilradical of $\q_F$.

In the case $F$ is the symmetric pyramid associated to $\bp$, we omit the
subscript $F$, so we just write $\q$ and $\b$. In
particular $\b = \lan f_{i,j} \mid i \le j\ran$ is the Borel
subalgebra consisting of upper triangular matrices in $\g$.

To $A \in \sTab^\le_\phi(F)$ we associate $\lambda_A \in \t^*$ by
declaring that $\lambda_A = \sum_{i =1}^n a_i \eps_i$ where $a_i$ is
the number in the box of $A$ occupying the same position as $-i$
in $\coord(F)$.  For example, if $A$ is the s-table in
\eqref{exstable}, then $\lambda_A  = -7 \eps_6 + 3 \eps_5 -8 \eps_4
-4 \eps_3 + 2 \eps_2 + 5 \eps_1$. The $W_0$-orbit of $\lambda_A$ is
denoted by $\Lambda_A \in \mf{t}^*/W_0$. Thus we can associate to
$A$ the highest weight module $L(\Lambda_A,\q_F)$ as defined in
\S\ref{ss:recaphw}; we denote $L(\Lambda_A, \q_F)$ by $L(A)$ for
short noting that $A$ encodes the parabolic subalgebra $\q_F$. We
define  $\sTab^+_\phi(F)$ to be the subset of $\sTab^\le_\phi(F)$
consisting of s-tables $A$ such that $L(A)$ is finite dimensional.
We write $\Pyr_\phi(\bp)$,
$\Pyr_\phi^+(\bp)$,
and
$\Pyr_\phi^\leq(\bp)$,
for $\sTab_\phi(F)$,
$\sTab^+_\phi(F)$,
and
$\sTab^\leq_\phi(F)$ respectively
when $F$ is the symmetric pyramid associated
to $\bp$.

Let $A \in \sTab^\le_\phi(F)$.  Then the weight $\lambda_A$
satisfies the condition $\lan \lambda_A , \alpha^\vee \ran \notin
\Z_{> 0}$ for all $\alpha \in \Phi_0^+$, because the rows of $A$ are
weakly increasing. The condition that the entries of $A$ either all
lie in $\Z$ or all lie in $\Z + \frac{1}{2}$ (the latter only if
$\g$ is of type D) implies that $\lambda_A \in \t^*_\Z$.

\subsection
{Associated varieties of primitive ideals} \label{ss:BV}

In this subsection we recall from \cite{BV} how to calculate the
associated variety of a primitive ideal in the universal enveloping
algebras of the classical Lie algebras.  For our purposes we
restrict to $\g$ of type C or D. For
a primitive ideal $I$ of $U(\g)$ we recall that the associated
variety $\cVA(I)$ of $I$ is the closure of a nilpotent $G$-orbit,
see for example \cite[\S9]{Ja}.

By Duflo's Theorem (see \cite{Du}), it suffices to calculate the
associated variety of annihilators of irreducible highest weight
modules. These modules are defined in terms of the Borel subalgebra
$\b$ and the Cartan subalgebra $\t \sub \b$ from \S\ref{ss:hwnotn}.
In this paper, we only consider integral weights $\lambda \in
\t_\Z^*$.  For such $\lambda$, we write $L(\lambda)$ for the
irreducible highest weight $U(\g)$-module with highest weight
$\lambda-\rho$ with respect to $\b$.  We recall that we say that
$\lambda$ is {\em anti-dominant} if $\langle \lambda,\alpha^\vee
\rangle \in \Z_{\le 0}$ for all $\alpha \in \Phi^+$. Also we recall
that for any $\mu \in \t^*_\Z$ there exists $w \in W$ and
antidominant $\lambda \in \t^*_\Z$ such that $\mu = w \lambda$.

A weight $\lambda \in \t^*$ is {\em regular} if
$\langle \lambda , \alpha^\vee \rangle \neq 0$
for all $\alpha \in \Phi$.
For any regular, anti-dominant weights $\lambda_1$ and $\lambda_2$,
and $w \in W$ it is well known that
\begin{equation} \label{e:assvar}
\cVA(\Ann L(w \lambda_1)) = \cVA(\Ann L(w \lambda_2)),
\end{equation}
see for example \cite[\S9.12]{Ja}. Recall that $\tilde W = N_{\tilde
G} (T)/T$, so when $\g$ is of type D it contains $W$ as a subgroup
of index 2. Suppose $\g$ is of type D and let $s \in \tilde W$ be
the element that fixes all $\eps_i$ except for $\eps_1$, which it
sends to $-\eps_1$, so $s \notin W$. If $\lambda \in \t^*_\Z$ is
antidominant, then it is easy to check that $s\lambda$ is too. It
follows that $\cVA(\Ann L(w \lambda)) = \cVA(\Ann L(ws \lambda))$
for any $w \in W$, so \eqref{e:assvar} holds for all $w \in \tilde
W$.

The following is an algorithm adapted from \cite{BV} to determine
the associated variety of a primitive ideal in types C and D.
\begin{Algorithm} \label{A:BV2}
\noindent {\em Input:} $\lambda = \sum a_i \eps_i \in \t^*_\Z$.

\noindent {\em Step 1:} Let $\bq = \RS(a_n, \dots, a_1, -a_1, \dots,
-a_n)$.
In type D when calculating the Robinson-Schensted algorithm,
if zeros occur, then we treat the two zeros closest
to the middle of $(a_n, \dots, a_1, -a_1, \dots, -a_n)$
as if the first zero is larger than the second.

\noindent {\em Step 2:} Put $\bq = (q_1 \le q_2 \le \dots \le q_m)$
into ascending order. By inserting zero into $\bf{q}$ if necessary,
in type C assume that $\bq$ has an odd number of parts, and in type
D assume that $\bq$ has an even number of parts.  For $i =1 \dots m$
let $r_i = q_i+i-1$ to create the list $(r_i)$. Let $(2 s_1, \dots,
2 s_{l})$ be the sublist of $(r_i)$ consisting of even numbers, and
let $(2 t_1+1, \dots, 2 t_{k} +1)$ be the sublist of $(r_i)$
consisting of odd numbers.

\noindent {\em Step 3:} Let $(u_i)$ be list obtained by sorting the
concatenation of $(s_i)$ with $(t_i)$. Now let $(s'_i)$ and $(t'_i)$
be the sublists of $(u_i)$ consisting of the terms with odd and even
indices, respectively. Let $(r'_i)$ denote the list obtained by
sorting the concatenation of $(2 s'_i)$ and $(2t'_i+1)$ in type C,
and $(2 s'_i +1)$ and $(2t'_i)$ in type D. Finally let $q'_i =
r'_i +1 -i$ to form the partition $\bf{q}'$.
\end{Algorithm}

The following corollary is a consequence of \cite[Theorem 18]{BV}.

\begin{Corollary}  \label{C:BV2}
Let $\lambda \in \t^*_\Z$ and let $\bq'$ be the output of
Algorithm~\ref{A:BV2}.
\begin{enumerate}
\item[(i)]  Suppose $\g$ is of type C or $\g$ is of type D and $\bp'$ is
not very even.  Then $\cVA(\Ann L(\lambda))$ is equal to the
closure of the
nilpotent
$\tilde G$-orbit corresponding to the partition $\bq'$.
\item[(ii)]  Suppose $\g$ is of type D and $\bp'$ is very even.
Then $\cVA(\Ann L(\lambda))$ is equal to the closure of
one of the two $G$-orbits in the
nilpotent $\tilde G$-orbit corresponding to the partition $\bq'$.
\end{enumerate}
\end{Corollary}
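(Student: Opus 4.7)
The plan is to derive the corollary by identifying Algorithm \ref{A:BV2} as a reformulation of the Barbasch--Vogan algorithm in \cite[Theorem 18]{BV}, restricted to integral weights and adapted to the sign conventions of \S\ref{ss:hwnotn}. The proof has three ingredients: a reduction of $\lambda$ to a convenient representative, the matching of the combinatorial steps, and the identification of the orbit in the very even case.

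First I would use the discussion preceding the corollary to reduce the computation to a standard form. Namely, any $\lambda \in \t_\Z^*$ may be written as $w \lambda_0$ with $\lambda_0$ antidominant, and by \eqref{e:assvar} together with the observation about $s \in \tilde W \setminus W$ in type D, the associated variety $\cVA(\Ann L(\lambda))$ depends (in the regular case) only on the right $W_{\lambda_0}$-coset of $w$. In the singular case one uses the standard translation principle to reduce to the regular situation, with the degeneracy of $\lambda$ reflected in the ``padding by zero'' allowed in Step 2.

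Next I would match Algorithm \ref{A:BV2} step by step with the algorithm of \cite[Theorem 18]{BV}. Step 1 performs Robinson--Schensted on the signed coordinate list $(a_n, \dots, a_1, -a_1, \dots, -a_n)$, producing the partition encoding the $W$-orbit of $\lambda$; in type D the tie-breaking rule for the two central zeros reflects the fact that $\pm a_1$ are interchanged by $s$. Step 2 passes from the resulting partition $\bq$ to its beta-sequence $r_i = q_i + i - 1$ and sorts by parity, which is the standard bijection between partitions and finite subsets of $\Z_{\ge 0}$, and which separates out precisely the obstruction to $\bq$ being a partition of type C (resp.\ D). Step 3 re-interleaves the two subsequences $(s_i)$ and $(t_i)$ via the sorted concatenation $(u_i)$ and then splits by parity of the index; after re-assembly this is the Lusztig--Spaltenstein $d$-map, equivalently the $C$-collapse (resp.\ $D$-collapse) of $\bq$ to the nearest special partition of the appropriate type, which is exactly the output of the Barbasch--Vogan algorithm. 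This verification is the main obstacle: tracking the parity bookkeeping in Step 3 against the $d$-map requires careful case analysis, particularly at the boundary between types C and D where the roles of even and odd parts are swapped.

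Finally, for part (ii), once part (i) identifies the nilpotent $\tilde G$-orbit whose closure is $\cVA(\Ann L(\lambda))$, and we are in the very even situation where this $\tilde G$-orbit decomposes as a union of two $G$-orbits $\cO_1 \sqcup \cO_2$, I would conclude as follows. The associated variety of a primitive ideal is irreducible (see \cite[\S9]{Ja}) and $G$-stable, hence it coincides with $\bar{\cO_i}$ for exactly one $i \in \{1,2\}$. The specification of which orbit occurs is encoded in the finer data of the Robinson--Schensted output from Step 1 and is contained in \cite[Theorem 18]{BV}, so no further work is required beyond invoking that result.
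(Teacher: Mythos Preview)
Your overall strategy---reduce to a convenient representative, then match Algorithm~\ref{A:BV2} with the Barbasch--Vogan algorithm step by step---is the same as the paper's. The difference is in how the matching is carried out. You propose to interpret Step~3 as the Lusztig--Spaltenstein $d$-map (or C/D-collapse) and then verify that this agrees with what \cite{BV} does; the paper avoids this detour entirely by simply writing down the algorithm exactly as it appears in \cite{BV} and checking that each of its steps is literally equal to the corresponding step of Algorithm~\ref{A:BV2} after two explicit adaptations. The adaptations are: (a) \cite{BV} uses a ``dual'' Robinson--Schensted that outputs the transpose partition, and the paper shows via Lemma~\ref{altcRS} that this is undone by reversing signs and using ordinary RS; (b) \cite{BV} takes input $w\sigma$ for a fixed regular antidominant $\sigma$, and the paper observes that since $-\sigma$ and $\lambda$ are both antidominant the relative order of the coordinates of $w(-\sigma)$ and $w\lambda$ agree, so $\part(\RS(\cdot))$ is unchanged. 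There is also a Step~2a in \cite{BV} for type D that you do not mention; the paper shows it is absorbed by the convention on the parity of the number of parts in Step~2. For the non-regular case the paper invokes \cite[Lemma 2.4]{Jo2} rather than the translation principle.

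Your route would work in principle, but identifying Step~3 with the $d$-map is an extra claim that itself needs proof, and doing so is no easier than the paper's direct transcription. The paper's approach has the advantage that no interpretation of the combinatorics is required: one only checks that two explicit recipes coincide.
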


\begin{proof}
    To prove Corollary \ref{C:BV2} we present the algorithm given in \cite{BV}.  This is broken
in to four steps.  After each step we make some remarks and explain
and justify some adaptations that we make to get Algorithm
\ref{A:BV2}.

\medskip

\noindent {\em Input:} The algorithm from \cite{BV} takes as input
an element $w \in \bar W$ and calculates the associated variety of
the annihilator of $L(w\lambda)$ where $\lambda \in \t^*_\Z$ is
antidominant and regular.  Let $\sigma = n \eps_n + (n-1)\eps_{n-1} + \dots +
\eps_1 \in \t^*$, and identify $w$ with $w \sigma$. Then identify $w
\sigma = \sum_{i=1}^n a_i \eps_i$ with the list $(a_n, \dots, a_1,
-a_1, \dots, -a_n)$.

\noindent {\em Adaptation:}  Instead in Algorithm \ref{A:BV2} we take as input $\mu = w
\lambda = \sum_{i=1}^n b_i \eps_i$ and we identify $\mu$ with the
list $(b_n, \dots, b_1, -b_1, \dots, -b_n)$.  This is justified
below.

\noindent {\em Step 1:} Calculate $\part(\RS(a_n, \dots, a_1,-a_1,
\dots, a_n))$, and let $\bf{q}$ be the transpose partition.

\noindent {\em Remarks and adaptation:}  In \cite{BV}
Barbasch and Vogan use a ``dual'' version of the Robinson-Schensted
algorithm which results in the transpose partition to that obtained
from the version given in \S\ref{ss:RS}, so they do not need to take
the transpose here.

It follows from Lemma~\ref{altcRS} that $\part(\RS(a_n, \dots, a_1,
-a_1, \dots, -a_n))$ is the transpose of $\part(\RS(-a_n, \dots,
-a_1, a_1, \dots, a_n))$.  Therefore, we obtain $\bq$ by applying
the Robinson--Schensted algorithm to $(-a_n, \dots, -a_1, a_1,
\dots, a_n)$.

Note that $-\sigma$ is antidominant, and $w(-\sigma) = -\sigma w =
\sum_{i=1}^n -a_i \eps_i$.  This means that for $i,j  = 1, \dots, n$
we have $-a_i < -a_j$ precisely when $b_i < b_j$ and
$-a_i < a_j$ precisely when $b_i < -b_j$. This implies that
$$\part(\RS(-a_n, \dots, -a_1, a_1,\dots, a_n)) =
\part(\RS(b_n, \dots, b_1, -b_1, \dots, -b_n)).$$
So in Algorithm \ref{A:BV2} we instead calculate $\bq = \part(\RS(b_n, \dots, b_1, -b_1,
\dots, -b_n))$.

\noindent {\em Step 2:} Put $\bq$ into ascending order; say $\bq =
(q_1 \le q_2 \le \dots \le  q_{2k +1})$ by inserting a zero
if necessary to ensure there are an odd number of parts.  For
$i =1 \dots 2k+1$, let $r_i = q_i+i-1$ to create the list $(r_i)$.
Let $(2 s_1, \dots, 2 s_{k+1})$ be the sublist of $(r_i)$ consisting
of even numbers, and let $(2 t_1+1, \dots, 2 t_k +1)$ be the sublist
of $(r_i)$ consisting of odd numbers.

\noindent {\em Step 2a:} Only do the following in the type D case.
If $s_1 \neq 0$, then replace the list $(t_1, \dots, t_k)$ with the
list $(0, t_1+1, \dots, t_k+1)$. If $s_1 = 0$, then replace the list
$(s_1, \dots, s_{k+1})$ with the list $(s_2-1, \dots, s_{k+1}-1)$.

\noindent {\em Remarks and adaptation:} The fact that the list
$(s_i)$ has $k+1$ elements and the list $(t_i)$ has $k$ elements
(before the change in the type D case) is due to
\cite[Proposition 17]{BV}.

In the type D case, suppose instead that we modify $\bq$ by possibly
adding a zero to the start to ensure that it has an even number
entries.  Then we can define the lists $(s_i)$ and $(t_i)$ using
the same procedure.  One can check that these lists are
exactly the same as those obtained by assuming that $\bq$ has an odd
number of entries and then doing Step 2a.  Therefore, we remove this
step in Algorithm~\ref{A:BV2}.

\noindent {\em Step 3:}
Do this step exactly as Step 3 in Algorithm \ref{A:BV2}.

\noindent {\em Output:} The nilpotent $\tilde G$-orbit corresponding to the
partition $\bq'$.

By \cite[Theorem 18]{BV} the
partition $\bq'$ corresponds to a nilpotent $\tilde G$-orbit,
and when $\g$
is of type C, then $\cVA(\Ann L(w\lambda))$ is precisely this
orbit. In the case $\g$ is of type D and $\bq'$ is not very even,
then \cite[Theorem 18]{BV} gives that
$\cVA(\Ann L(w\lambda))$ is the closure of the nilpotent $\SO_{2n}$-orbit
corresponding to $\bq'$. If $\bq'$ is very even,
then \cite[Theorem 18]{BV} gives that
the associated
variety is the closure of
one of the $\SO_{2n}$-orbits contained in the $\tilde G$-orbit
corresponding to $\bq'$.
We do not need to know which orbit for
our purposes.

Our convention about zeros in the type D case from Algorithm \ref{A:BV2}
is due to \cite[p. 173]{BV}.

So far we have only justified that Algorithm \ref{A:BV2} works
the case that $\lambda$ is regular, however we can remove this assumption
using \cite[Lemma 2.4]{Jo2}.
\end{proof}

We recall that the array $\binom{s}{t}$ obtained by placing the list
$(s_i)$ on the top row and the list $(t_i)$ is called a {\em symbol}
associated to $\bq$.
This notation was introduced by Lusztig in
\cite{Lu}.
We call the concatenated list $(u_i)$
the {\em content} of the symbol $\binom{s}{t}$, or simply the
{\em content} of $\bq$.

\subsection{The component group action} \label{ss:comp}
At present the only cases where
the nontrivial action of
$\tilde C$ on the set of isomorphism classes of finite dimensional
irreducible $U(\g,e)$-modules is known is where $\mf{g} =
\mf{sp}_{2n}$ or $\mf{g} = \mf{so}_{2n}$ and the Jordan type of $e$
has an even number of Jordan blocks all of the same size, i.e.\ the
case $\bp = (l^{2r})$, so that the symmetric pyramid of $\bp$ is a
rectangle. The description of the action depends on the notion of the
{\em $\sharp$-element} of a list of complex numbers.

Given a list $(a_1,\dots,a_{2k+1})$ of complex numbers
let $\{(a_1^{(i)},\dots,a_{2k+1}^{(i)}) \mid i \in I\}$ be the set
of all permutations of this list which satisfy
$a^{(i)}_{2j-1}+a^{(i)}_{2j} \in \Z_{>0}$
for each $j=1,\dots,k$. Assuming that such rearrangements exist, we
define the {\em $\sharp$-element} of $(a_1,\dots,a_{2k+1})$ to be
the unique maximal element of the set $\{a_{2k+1}^{(i)} \mid i \in
I\}$. On the other hand, if no such rearrangements exist, we say
that the $\sharp$-element of $(a_1,\dots,a_{2k+1})$ is undefined.
For example, the $\sharp$-element of $(-3, -1, 2)$ is $-3$, whereas
the $\sharp$-element of $(-3,-2,1)$ is undefined.

We abuse notation somewhat about saying that the $\sharp$-element of
a list of numbers with an even number of elements is the
$\sharp$-element of that list with zero inserted.

The following lemma is easy to prove and is required in the proof of
Theorem \ref{T:main}.

\begin{Lemma} \label{L:sharp1}
    If $(a_1, \dots, a_{2k})$ is a list of integers which satisfies
    $a_{2i-i} + a_{2i} > 0$ for $i=1,\dots,k$, and $b_1, \dots, b_{2k}$
    is same list sorted into weakly increasing order, then
    $b_{i}+b_{2k+1-i} > 0$ for $i=1,\dots, 2k$.
\end{Lemma}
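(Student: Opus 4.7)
The plan is to argue by contradiction. Suppose that $b_{i_0}+b_{2k+1-i_0}\le 0$ for some $i_0\in\{1,\dots,2k\}$. By the symmetry $i\leftrightarrow 2k+1-i$, the indices $i$ and $2k+1-i$ give the same pair-sum, so I may assume $i_0\le k$. Setting $c:=b_{2k+1-i_0}$, the sorted order yields $b_j\le c$ for all $j\le 2k+1-i_0$, and the contradiction hypothesis gives $b_{i_0}\le -c$, so in fact $b_j\le -c$ for all $j\le i_0$. The argument then splits on the sign of $c$.

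If $c>0$, the $i_0$ entries $b_1,\dots,b_{i_0}$ are all at most $-c<0$. No original pair $(a_{2j-1},a_{2j})$ can have both entries $\le -c$, since then its sum would be $\le -2c<0$, violating $a_{2j-1}+a_{2j}>0$. Consequently at least $i_0$ distinct pairs contain a member that is $\le -c$, and in each such pair the partner $a$ satisfies $a>-(-c)=c$, so is strictly greater than $c$. But the entries of the sorted list that are strictly greater than $c=b_{2k+1-i_0}$ are contained in $\{b_{2k+2-i_0},\dots,b_{2k}\}$, of which there are at most $i_0-1$; this is a contradiction.

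If instead $c\le 0$, then $b_1,\dots,b_{2k+1-i_0}$ are all $\le 0$, giving at least $2k+1-i_0$ non-positive entries in the list. On the other hand each of the $k$ pairs $(a_{2j-1},a_{2j})$ sums to a positive integer and so must contain at least one positive element; hence at most $k$ of the entries are non-positive. Combining these bounds gives $2k+1-i_0\le k$, i.e.\ $i_0\ge k+1$, contradicting $i_0\le k$.

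Finally, the case $i>k$ is handled by reindexing via $i\mapsto 2k+1-i$, which reduces it to the case already proved. I do not anticipate any substantive obstacle; the argument is an elementary counting one, and the main thing to be careful about is keeping strict and non-strict inequalities straight when tallying entries above and below $c$ in the $c>0$ case, which relies crucially on the entries being integers (so that the pair-sum hypothesis $a_{2j-1}+a_{2j}>0$ upgrades to $\ge 1$).
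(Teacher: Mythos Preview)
Your argument is correct. The paper itself does not supply a proof of this lemma, stating only that it ``is easy to prove,'' so there is nothing to compare your approach against; your contradiction argument via the two cases $c>0$ and $c\le 0$ is clean and complete. One small remark: your closing comment that the argument ``relies crucially on the entries being integers'' is not actually needed in your proof as written---from $a+e>0$ and $e\le -c$ you get $a>-e\ge c$, hence $a>c$ by transitivity, valid for real numbers---so the lemma in fact holds for reals as well.
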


For the next few paragraphs let $F$ denote the unique s-frame which
satisfies $\part(F) = (l^{2r})$, where $r$ and $l$ are fixed
positive integers, and $l$ is even if $\g = \sp_{2n}$ and $l$ is odd
if $\g = \so_{2n}$. In this case we have $\tilde C \iso \Z_2 = \lan c \ran$
and we define an operation of $c$ on $\sTab^+(F) \sub \sTab^\leq(F)$
as follows. Let $A \in \sTab^+(F)$ and let $a_1, \dots, a_l$ be row
$-1$ of $A$. By \cite[Theorem 1.2]{Bro2} the $\sharp$-element
of $a_1, \dots, a_l$ is defined; let $a$ be this number. We declare
that $c \cdot A \in \sTab^\leq(F)$ is the s-table with the same rows
as $A$, except with one occurrence of $a$ replaced with $-a$ in row
$-1$, and one occurrence of $-a$ replaced with $a$ in row $1$. Then
\cite[Theorem 1.3]{Bro2} says that $c \cdot L(A) = L(c \cdot A)$. An
example of this action  is
\begin{equation*}
    c \cdot
    \begin{array}{c}
\begin{picture}(40,40)
\put(0,0){\line(1,0){40}}
\put(0,20){\line(1,0){40}} \put(0,40){\line(1,0){40}}
\put(0,0){\line(0,1){20}} \put(20,0){\line(0,1){20}}
\put(40,0){\line(0,1){20}} \put(0,20){\line(0,1){20}}
\put(20,20){\line(0,1){20}} \put(40,20){\line(0,1){20}}
\put(8,10){\makebox(0,0){{-2}}} \put(28,10){\makebox(0,0){{-1}}}
\put(10,30){\makebox(0,0){{1}}} \put(30,30){\makebox(0,0){{2}}}
\put(20,20){\circle*{3}}
\end{picture}
\end{array}
=
    \begin{array}{c}
\begin{picture}(40,40)
\put(0,0){\line(1,0){40}}
\put(0,20){\line(1,0){40}} \put(0,40){\line(1,0){40}}
\put(0,0){\line(0,1){20}} \put(20,0){\line(0,1){20}}
\put(40,0){\line(0,1){20}} \put(0,20){\line(0,1){20}}
\put(20,20){\line(0,1){20}} \put(40,20){\line(0,1){20}}
\put(8,10){\makebox(0,0){{-1}}} \put(30,10){\makebox(0,0){{2}}}
\put(8,30){\makebox(0,0){{-2}}} \put(30,30){\makebox(0,0){{1}}}
\put(20,20){\circle*{3}}
\end{picture}
\end{array}
.
\end{equation*}

The next lemma helps explain what happens when the
Robinson-Schensted algorithm is applied to $A$ in the case that $r =
2$ and the $\sharp$-element of row $-1$ of $A$ is defined; it is
required in the proof of Theorem~\ref{T:main}.
For this lemma, in the case that $l$ is odd, we use the zero convention
for calculating the Robinson-Schensted algorithm from
Algorithm \ref{A:BV2}.
\begin{Lemma} \label{L:sharpdef}
    Let $A$ be a rectangular s-frame with
   distinct entries and with
    $\part(A) = (l^2)$.
     Then the
    $\sharp$-element of row $-1$ of $A$ is defined
    if and only if $\RS(A) = (l,l)$ or $\RS(A) = (l+1,l-1)$.
    More specifically,
    let $a$ be the $\sharp$-element of row $-1$ of $A$.
    If $a\ge 0$, then $\RS(A) = (l,l)$.
    If $a <  0$, then $\RS(A) = (l+1,l-1)$.
 \end{Lemma}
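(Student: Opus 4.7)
The plan is to reduce the statement to the combinatorics of longest weakly increasing subsequences in $\word(A)$ together with Hall's marriage theorem. First I will write the entries of row $-1$ in weakly increasing order as $a_1 < a_2 < \cdots < a_l$ (strict, by distinctness); by skew-symmetry row $1$ then reads $-a_l < \cdots < -a_1$, and distinctness also forces $a_i \ne 0$ and $a_i + a_j \ne 0$ for all $i,j$. Thus $\word(A) = (a_1,\dots,a_l,-a_l,\dots,-a_1)$ is a shuffle of two strictly increasing subsequences of length $l$, so $\ell(\word(A),2) = 2l$, and Lemma~\ref{altRS} then forces $\RS(A) = (l+k,l-k)$ for some $0 \le k \le l$.

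Next I will identify $k$ via a direct analysis of the longest weakly increasing subsequences. Any such subsequence can be chosen to be a prefix $a_1,\dots,a_m$ of the first half glued to the terminal block $-a_n,-a_{n-1},\dots,-a_1$ of the second half, and the gluing is legal precisely when $a_m + a_n < 0$. Therefore the first row of $\RS(A)$ has length $p_1 = \max\bigl(l,\,\max\{m + n : a_m + a_n < 0\}\bigr)$, which shows $\RS(A) = (l,l)$ iff $a_m + a_n > 0$ for all $(m,n)$ with $m+n \ge l+1$, and $\RS(A) \in \{(l,l),(l+1,l-1)\}$ iff $a_m + a_n > 0$ for all $(m,n)$ with $m+n \ge l+2$.

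Let $k_{\mathrm{neg}} = |\{i : a_i < 0\}|$. Any valid $\sharp$-rearrangement must pair each negative $a_i$ (save at most one leftover) with a strictly larger positive $a_j$, since a positive-sum pair containing a negative needs the other entry to be positive and to exceed the absolute value of that negative; in particular (negative, $0$) pairs are ruled out. I will apply Hall's marriage theorem to the bipartite graph with edges $\{(i,j) : i \le k_{\mathrm{neg}} < j,\ a_i + a_j > 0\}$ to conclude: a perfect matching from negatives into positives exists iff $a_s + a_{l-s+1} > 0$ for $s = 1,\dots,k_{\mathrm{neg}}$, while an all-but-one matching exists iff $a_s + a_{l-s+2} > 0$ for $s = 2,\dots,k_{\mathrm{neg}}$.

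To finish I will match these Hall conditions against the sign conditions from the second paragraph using monotonicity of the $a_i$'s and the automatic positivity of (positive, positive) pair sums, then handle parity: leftover positive singletons and, when $l$ is even, the auxiliary $0$ can always be packaged into (positive, positive) and (positive, $0$) pairs to produce exactly $\lfloor l/2 \rfloor$ or $\lceil l/2 \rceil$ pairs as required. This will give $\sharp$-element defined iff $\RS(A) \in \{(l,l),(l+1,l-1)\}$, $\sharp$-element $\ge 0$ iff $\RS(A) = (l,l)$, and $\sharp$-element $< 0$ iff $\RS(A) = (l+1,l-1)$. The main technical hurdle will be the parity bookkeeping distinguishing the $l$ even and $l$ odd cases, specifically showing that the sign of the optimal leftover in a $\sharp$-rearrangement exactly reflects whether the full or only the all-but-one Hall condition holds.
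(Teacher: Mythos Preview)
Your proposal is correct and takes a genuinely different route from the paper's proof.

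The paper argues each direction separately and rather concretely. For the forward implication it invokes Lemma~\ref{L:sharp1} to rewrite a valid $\sharp$-pairing in the sorted form $a_i+a_{l+1-i}>0$ and reads the RS shape off directly; in the $a<0$ case it exhibits an explicit increasing subsequence of length $l+1$ and bounds $p_1\le l+1$ by inspection. For the backward implication with $\RS(A)=(l+1,l-1)$ it actually writes down the two-row RS tableau and extracts a $\sharp$-pairing from the bumping pattern.

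You instead compute $p_1$ once and for all as $\max\bigl(l,\max\{m+n:a_m+a_n<0\}\bigr)$ via Lemma~\ref{altRS}, then observe that the resulting sign conditions $a_s+a_{l+1-s}>0$ (resp.\ $a_s+a_{l+2-s}>0$) are, because the neighbourhoods are nested, exactly the Hall conditions for a perfect (resp.\ deficiency-one) matching of the negative $a_i$'s into the positive ones. The remaining step---packaging leftover positives and, when $l$ is even, the auxiliary~$0$ into valid pairs---is routine, and your parity bookkeeping is correct: in every case the number of unmatched nonnegatives has the right parity to leave exactly one element over.

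What each approach buys: your argument is uniform (both directions at once, via one equivalence) and conceptually clean, isolating the combinatorial core as a Hall-type statement; the paper's argument is more hands-on and stays closer to the actual RS computation, which feeds directly into the Knuth-equivalence manipulations of Remark~\ref{R:sharp} used later. One small note: your observation that distinctness of the entries of $A$ forces $a_i\ne 0$ is correct under the literal hypothesis, and it lets you avoid the $a_s=0$ sub-case the paper treats; the paper seems to carry that case along with an eye toward the type~D situation (where regularity of $\lambda_A$ permits a single zero coordinate), even though the lemma as stated does not require it.
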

 \begin{proof}
     Let $a_1, \dots, a_{l}$ be row $-1$ of $A$, and suppose that the $\sharp$-element of
     $(a_1, \dots, a_{l})$ is defined and is $a_s$.
     Suppose that $a_s\ge0$.
     If $a_s >0$ or if $l$ is even,
     then by Lemma~\ref{L:sharp1} we have that
     $a_i + a_{l+1-i} > 0$ for $i=1, \dots, l$.
     Thus for $i=1, \dots, l$ we have
     that $a_i > -a_{l+1-i}$, which implies that
     $\RS(A) = (l,l)$.
     If $a_s = 0$ and $l$ is odd, then we must have that
     $s = (l+1)/2$, and we have that
     $a_i + a_{l+1-i} > 0$ for $i=1, \dots, (l-1)/2$.
     Thus for $i=1, \dots, (l-1)/2, (l+3)/2, \dots, l$,  we have
     that $a_i > -a_{l+1-i}$.
     Since we also count $0 = a_s$ as greater than $0 = -a_s$ when
     calculating the Robinson-Schensted algorithm,
     we have that
     $\RS(A) = (l,l)$.
     Now suppose that $a_s < 0$.
      Then
     $1 \le s \le (l+1)/2$.  Since $a_s$ is the sharp element,
$a_1,\dots,a_s,-a_{l+1-s},\dots,-a_1$ is an increasing sequence of
length $l+1$ in $\word(A)$.  Combining this with the fact that the rows
of $A$ are increasing and using Lemma~\ref{altRS}, we see that
$\RS(A) \ge (l+1,l-1)$. Also by Lemma \ref{L:sharp1} we
     must have that
     \begin{itemize}
\item $a_j+a_{l+1-j} > 0$ for $1 \le j < s$,
\item $a_j+a_{l+2-j} > 0$ for $s < j \le (l+1)/2$, and
\item $a_{l/2+1} > 0$ if $l$ is even.
     \end{itemize}
     The only way that $\RS(A)$ could be larger
        than $(l+1,l-1)$ in the dominance order is if the first term is larger than
        $l+1$.  By Lemma~\ref{altRS}, this is only possible if $\word(A)$
    contains a weakly increasing subsequence
        of length at least $l+2$, which can only happen if there exists $j$
        such that $a_j < -a_{l+2-j}$, which cannot happen due to the
        above conditions.

        Now suppose that $\part(\RS(A)) = (l,l)$.
        Then we must have that $a_i + a_{l+1-i} > 0$ for all $i$
    such that $1 \leq i < (l+1)/2$,
        so the $\sharp$-element is defined.

        Finally suppose that $\part(\RS(A)) = (l+1,l-1)$.
        So for some $j$ we have that

\begin{equation} \label{rsa}
  \RS(A) =
  \begin{array}{|c|c|c|c|c|c|c|c|c|c|}
      \cline{1-8}
     a_1 & a_2 & \dots & a_{j-1} & a_{j+1} & a_{j+2} & \dots & a_{l}  \\
     \hline
     -a_{l} & -a_{l-1} & \dots & -a_{l-j+2} & a_j &
      -a_{l-j+1} & \dots & -a_3 & -a_2 & -a_1 \\
     \hline
  \end{array}
\end{equation}
This implies that $j \le l/2$, otherwise we would have $-a_j < a_{l
-j+1}$ and $-a_j < a_j < -a_{l-j+1}$, which is a contradiction. Also
\eqref{rsa} tells us that the following sums are all positive
integers: $a_1 + a_{l}, a_2 + a_{l-1}, \dots, a_{j-1} + a_{l-j+2}$.
We also have that $a_{j+1} + a_{l-j+1}, \dots, a_l + a_2$  are all
positive, because during the Robinson-Schensted algorithm
$-a_{l-j+1}$ must bump $a_{j+1}$, $-a_{l-j}$ must bump $a_{j+2}$,
and so on. Thus the $\sharp$-element of row $-1$ of $A$ is defined.
\end{proof}

\begin{Remark} \label{R:sharp}
    It is useful in the proof of Theorem \ref{T:main} to consider explicitly calculating $\RS(c \cdot A)$
when $A$ has 2 rows, the $\sharp$-element of $A$ is positive, and
all the elements of row $-1$ of $A$ are distinct. Let $a_1, \dots,
a_l$ be row $-1$ of $A$, and let $a_s$ be the $\sharp$-element of
$a_1, \dots, a_l$. So we must have that $s > l/2$. If $s > (l+1)/2$,
then by calculating the Robinson-Schensted algorithm on both words,
observe that the word $a_1, \dots, a_l, -a_l, \dots, -a_1$ is
Knuth-equivalent to
\begin{align*}
    a_1, \dots, &a_{l-s}, -a_l, \dots, -a_{s+1},a_{l-s+1},-a_s, a_{l-s+2}, \dots, a_{s-1}, \\
    &-a_{s-1} ,\dots, -a_{l-s+2}, a_s, -a_{l-s+1}, a_{s+1}, \dots, a_l, -a_{l-s}, \dots, -a_1.
\end{align*}
By swapping $a_{l-s+1}$ with $-a_s$, and $a_s$ with $-a_{l-s+1}$
we get the word
\begin{align*}
a_1, \dots, &a_{l-s}, -a_l, \dots, -a_{s},a_{l-s+1}, \dots, a_{s-1}, \\
  &-a_{s-1} ,\dots, -a_{l-s+1}, a_{s}, \dots, a_l, -a_{l-s}, \dots, -a_1.
\end{align*}
This in turn
is Knuth-equivalent to the word
\begin{align*}
a_1, \dots, &a_{l-s}, -a_s, a_{l-s+1}, \dots, a_{s-1}, a_{s+1}, \dots, a_l, \\
  &-a_l, \dots, -a_{s+1}, -a_{s-1}, \dots, -a_{l-s+1}, a_s,-a_{l-s}, \dots, -a_1,
\end{align*}
which is $\word(c \cdot A)$.

If $s = (l+1)/2$, then
by calculating Robinson-Schensted on both words, observe that the word $a_1, \dots, a_l, -a_l, \dots, -a_1$
is Knuth-equivalent to
\[
    a_1, \dots, a_{s-1}, -a_l, \dots, -a_{s+1},a_s,
    -a_s, a_{s+1}, \dots, a_l, -a_{s-1}, \dots, -a_1.
    \]
By swapping $a_{s}$ with $-a_s$
we get the word
\[
a_1, \dots, a_{s-1}, -a_l, \dots, -a_{s},
  a_{s}, \dots, a_l, -a_{s-1}, \dots, -a_1.
  \]
This in turn
is Knuth-equivalent to the word
\[
a_1, \dots, a_{s-1}, -a_s, a_{s+1}, \dots, a_l,
-a_l, \dots, -a_{s+1}, a_{s}, -a_{s-1}, \dots, -a_1,
  \]
which is $\word(c \cdot A)$.
\end{Remark}

Now we are in a position to
describe an operation of the elements of $\tilde C$ on $\Pyr_\phi^+(\bp)$
for an arbitrary even multiplicity partition $\bp$.
\begin{Remark}
In this work we
do not verify that
the operation of elements of $\tilde C$ on
$\Pyr_\phi^+(\bp)$
defines a group action of $\tilde C$, however this is the case.
While we have a proof of this, it is rather lengthy.
Moreover, in the future work \cite{BroG} we will show that
the operation of elements of $\tilde C$ on  $\Pyr_\phi^+(\bp)$ corresponds to
the action of $\tilde C$ on finite dimensional irreducible $U(\mf{g},e)$-modules.
This will imply that we do have a well defined $\tilde C$-action on
 $\Pyr_\phi^+(\bp)$.
 With the exception of Corollary \ref{C:evencase}, all of our results holds without knowing that the
operation of elements of $\tilde C$ on $\Pyr_\phi^+(\bp)$ defines a group action.
 \end{Remark}

 Let $i_1, \dots, i_d$ be such that $i_j < i_{j+1}$ and
$p_{i_1},\dots,p_{i_d}$ are  the minimal the distinct parts of
$\bp = (p_1^2 \ge p_2^2 \ge \dots \ge p_r^2)$
that are
odd (respectively even) when $\g = \so_{2n}$ ( respectively $\sp_{2n}$).
By minimal we mean that if $p_k = p_{i_j}$, then $k \ge i_j$.
Then we can choose generators
$c_1,\dots,c_d$ for $\tilde C \iso \Z_2^d$ corresponding to
$p_{i_1},\dots,p_{i_d}$.
More specifically,
in type C
we set
\[
c_k = \sum_{\substack{
         -n \le i,j \le n \\ \col(i) = \col(j) \\ \row(i) = i_k \\ \row(j) = -i_k}}
    \sgn(\col(i)) (e_{i,j} + e_{j,i})
    + \sum_{\substack{ -n \le i \le n \\ \row(i) \neq \pm i_k}} e_{i,i},
\]
and in type D we set
\[
c_k = \sum_{\substack{
         -n \le i,j \le n \\ \col(i) = \col(j) \\ \row(i) = i_k \\ \row(j) = -i_k}}
     (-1)^{\col(i)/2} (e_{i,j} + e_{j,i})
    + \sum_{\substack{ -n \le i \le n \\ \row(i) \neq \pm i_k}} e_{i,i}.
\]
Now one can calculate that $c_k \in \tilde H^e$.
Furthermore the argument used in \cite[Section 6]{Bro2}
can be adapted to show that
$\tilde C$ is generated by $c_1, \dots, c_d$.
Note that in the type D case, then any word $w$
in $c_1, \dots, c_d$ of even length lies in $C$.

Next we explain how to extend the operation of $c$ on rectangular
s-tables given above  to any s-table $A \in \sTab_\phi^+(F)$ as it
only involves the middle two rows.  We assume the middle two rows of
$A$ have odd length if $\g = \so_{2n}$ and even length if $\g =
\sp_{2n}$. We consider the Levi subalgebra $\mf{g}^\mf{r}$, where
\[
\mf{r} = \left\lan \sum_{\substack {1 \leq i \leq  n \\
\row(i) \neq 1}}  f_{i,i} \right\ran.
\]
Then $\mf{g}^\r \iso
\mf{gl}_{n-2 l} \oplus \mf{g}^\phi_{2 l}$,
where $l$ is the length of row $1$ of $F$.
Thus by
Corollary~\ref{C:inductsl}, $L^\r(\Lambda_A,\q_F)$ is finite
dimensional. In turn this implies that the irreducible highest
weight module $L(A^{-1}_1)$ for $U(\mf{g}^\phi_{2 l},e')$,
where $e' \in \mf{g}^\phi_{2 l}$ is a nilpotent element with
Jordan type $(l^2)$, is finite
dimensional.  This means that $c \cdot A^{-1}_1$ is defined, so we
can define $c \cdot A$ to be the s-table obtained from $A$ by
replacing the middle two rows with $c \cdot A^{-1}_1$.  By
Lemma~\ref{L:cact1} and \cite[Theorem 6.1]{Bro2} we have that
\begin{equation} \label{EQ:clact}
   L(c \cdot A) \iso c \cdot L(A).
\end{equation}
So in particular, $L(c \cdot A)$ is finite dimensional, so $c \cdot
A \in \sTab_\phi^+(F)$.

To define the operation $c_k$  on $\Pyr_\phi^+(F)$ we require the operators $\bar s_i$
defined in \S\ref{ss:stable}. Another important Levi subalgebra is
$\g^\s$, where
\[
   \mf{s} = \left\lan \sum_{i=1}^n f_{i,i} \right\ran.
\]
Then $\mf{g}^\mf{s} \iso \mf{gl}_n$.  The next lemma is required to
ensure the operation of $c_k$ is defined.

\begin{Lemma} \label{L:siwelldef}
    Let $F$ be an s-frame associated to $\bp$ and
    let $A \in \sTab^+_\phi(F)$.
    Then
    \begin{enumerate}
    \item[(i)] $\part (\RS(A^+)) = \part(A^+)$ and
    \item[(ii)] $w \cdot A$ is defined for all words $w$ in $\bar s_1, \dots, \bar s_{r-1}$.
    \end{enumerate}
\end{Lemma}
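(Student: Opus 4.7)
The plan is to reduce to the type A case by working with the intermediate Levi subalgebra $\g^\s \iso \gl_n$, where $\s = \lan \sum_{i=1}^n f_{i,i} \ran$ is a full subalgebra of $\t^e$. In $\g^\s$, the nilpotent $e$ is regular in the block diagonal Levi $\gl_{p_1} \oplus \cdots \oplus \gl_{p_r}$, hence is of standard Levi type. For $A \in \sTab^+_\phi(F)$, Proposition~\ref{P:induct} yields that $L^\s(\Lambda_A)$ is a finite dimensional irreducible $U(\g^\s, e)$-module; in the $\gl_n$ picture this module is labelled by the upper half table $A^+$.

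For part~(i), Corollary~\ref{C:BGKconj} applies to the now-standard-Levi situation in $\g^\s$, giving $\cVA(\Ann_{U(\gl_n)} L(\mu)) = \bar{\GL_n \cdot e}$ for the $\gl_n$ highest weight $\mu$ determined by $A^+$. By the classical type A result (essentially due to Joseph) identifying this associated variety with the closure of the nilpotent orbit whose Jordan type is the Robinson--Schensted shape of $\mu$ read as a word, together with the identification of this word with $\word(A^+)$, we conclude $\part(\RS(A^+)) = \part(A^+)$.

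For part~(ii), (i) shows that $A^+ \in \Tab^\leq(F^+)$ satisfies $\part(\RS(A^+)) = \part(F^+)$, so Lemma~\ref{swapdef} guarantees $s_i(A^+)$ is defined for every $1 \leq i < r$. By the skew-symmetry of $A$ together with the dual (strictly-decreasing-subsequence) analogue, $s_{-i}$ is defined on the lower half of $A$, and hence $\bar s_i(A)$ is defined. Lemma~\ref{swaprs} shows $\RS(s_i(A^+)) = \RS(A^+)$, while row swapping preserves the multiset of row lengths, so $\bar s_i(A)$ again satisfies the hypothesis of~(i). Induction on the length of the word $w$ in $\bar s_1, \ldots, \bar s_{r-1}$ completes the proof.

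The main obstacle is the compatibility check in~(i): one must match the order in which the entries of $A^+$ appear in $\word(A^+)$ with the reading convention for $\mu$ used in the type A associated-variety computation, and verify that the $\tilde\rho$-shift built into the isomorphism $\xi_{-\tilde\rho}$ used to label Verma modules does not disturb the combinatorics.
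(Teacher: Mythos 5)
Your proposal is correct and takes essentially the same route as the paper: the authors cite Corollary~\ref{C:inductsl} (the standard Levi type case of Proposition~\ref{P:induct}) to see that $L^\s(\Lambda_A,\q_F)$ is finite dimensional, then combine Corollary~\ref{C:BGKconj} with Joseph's type~A associated variety result \cite[Corollary~3.3]{Jo1} to obtain (i), and deduce (ii) from Lemmas~\ref{swaprs} and~\ref{swapdef} exactly as you spell out.
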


\begin{proof}
    By Corollary~\ref{C:inductsl} we have that $L^\mf{s}(\Lambda_A,\q_F)$
    is finite dimensional.
    This, along with Corollary~\ref{C:BGKconj} and \cite[Corollary 3.3]{Jo1},
    implies that $\part(\RS(A^+)) = \part(A^+)$ giving (i).  Now (ii)
    follows from Lemmas~\ref{swaprs} and \ref{swapdef}.
\end{proof}

Now we define the action of $c_k$ on $A \in \Pyr_\phi^+(\bp)$. First
we apply $\bar s_{i_k-1},\dots,\bar s_1$ to $A$ moving row $i_k$ in
to the middle.  Each of these operations is defined by
Lemma~\ref{L:siwelldef}.  Next we apply $c$ to obtain $A'$.  Using
\eqref{EQ:clact}, we have that $A' \in \sTab_\phi^+(F)$, where $F$
is the frame obtained from the symmetric pyramid by applying $\bar
s_{i_k-1},\dots,\bar s_1$. We finish by applying the operators $\bar
s_1,\dots,\bar s_{i_k-1}$ so that we end up with an element of
$\Pyr_\phi^\leq(\bp)$. Again by Lemma~\ref{L:siwelldef} each of these
operators is defined. Putting this together we define
\[
  c_k \cdot A = \bar s_{i_k} \bar s_{i_k-1} \dots \bar s_{1} c \bar s_{1} \dots \bar s_{i_k} \cdot A.
\]

\begin{Lemma} \label{L:fdpreserved}
    Let $A \in \Pyr_\phi^+(\bp)$ (so $L(A)$ is finite dimensional), and let
    $w$ be a word in $c_1, \dots, c_d$.
    Then $L(w \cdot A)$ is finite dimensional.
\end{Lemma}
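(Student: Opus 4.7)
The plan is to proceed by induction on the length of $w$, reducing to the case of a single generator $c_k$. For the inductive step it suffices to show that if $L(A)$ is finite dimensional, then so is $L(c_k \cdot A)$. Recalling the definition
\[
c_k \cdot A = \bar s_{i_k}\bar s_{i_k-1}\cdots\bar s_1 \cdot c \cdot \bar s_1\cdots\bar s_{i_k} \cdot A,
\]
I would verify that finite-dimensionality of the associated irreducible module is preserved at every step in this composition, treating the row swaps $\bar s_i$ and the middle $c$-operation separately. Definedness of each intermediate row swap is guaranteed throughout by Lemma~\ref{L:siwelldef}(ii), since all tables that appear are images of $A \in \sTab_\phi^+$ under words in the $\bar s_j$.

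For a single row swap $\bar s_i = s_i \circ s_{-i}$, the key observation is combinatorial: $s_{-i}$ permutes entries within the upper half of the s-table (between rows $-i-1$ and $-i$) and dually $s_i$ permutes entries within the lower half, so the multiset of entries in the upper half is preserved. Since the coefficients of $\lambda_A$ in the $\eps$-basis are precisely the upper-half entries of $A$ read off the coordinate pyramid, the weights $\lambda_A$ and $\lambda_{\bar s_i(A)}$ have identical multisets of coefficients and therefore lie in a common $W$-orbit (in type D, no sign changes are even required). By \eqref{e:assvar} and the extension to $\tilde W$ discussed in \S\ref{ss:BV} we obtain
\[
\cVA(\Ann_{U(\g)} L(\lambda_A)) = \cVA(\Ann_{U(\g)} L(\lambda_{\bar s_i(A)})).
\]
Since $e$ is regular in the Levi subalgebra $\g_0$, as recorded in \S\ref{ss:hwnotn}, Corollary~\ref{C:BGKconj} applies in both frames and translates this equality into the biconditional that $L(A)$ is finite dimensional if and only if $L(\bar s_i(A))$ is.

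For the middle $c$-operation, after the first block of row swaps we have an intermediate s-table $B$ with $L(B)$ finite dimensional by the previous paragraph, and then \eqref{EQ:clact} yields $L(c\cdot B) \iso c\cdot L(B)$. Since twisting by $c \in \tilde C$ is an auto-equivalence on the category of finite dimensional irreducible $U(\g,e)$-modules, $L(c\cdot B)$ is also finite dimensional. Applying the final block of row swaps and invoking the previous paragraph one more time then yields $L(c_k\cdot A)$ finite dimensional, completing the induction.

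The main conceptual point is the multiset-preservation observation for row swaps, which allows the single invocation of Corollary~\ref{C:BGKconj} to cover every step uniformly and bypasses any need to re-prove a combinatorial characterization of finite-dimensionality in non-symmetric frames. The only mild bookkeeping concerns the fact that the parabolic $\q_F$ changes with $F$; however the associated variety depends only on the $W$-orbit of the weight, not on the parabolic, so the chain of implications goes through unaltered across all intermediate frames.
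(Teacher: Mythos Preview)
Your argument for the $c$-step is fine, but the row-swap step contains a genuine error. You claim that because $\lambda_A$ and $\lambda_{\bar s_i(A)}$ have the same multiset of coefficients they lie in a common $W$-orbit, and then invoke \eqref{e:assvar} to conclude equality of associated varieties. But \eqref{e:assvar} is a translation-principle statement: it says that for \emph{fixed} $w$ and varying regular anti-dominant $\lambda$, the associated variety of $\Ann L(w\lambda)$ is constant. It does \emph{not} say that $W$-conjugate weights give highest weight modules whose annihilators have the same associated variety, and your final sentence ``the associated variety depends only on the $W$-orbit of the weight'' is simply false. For instance, if $\lambda$ is regular anti-dominant then $L(\lambda)$ is an irreducible Verma module while $L(w_0\lambda)$ is finite dimensional; these are $W$-conjugate weights with associated varieties the full nilpotent cone and $\{0\}$ respectively. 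Knowing only the multiset of coefficients of $\lambda_A$ is exactly knowing the central character, and primitive ideals with a given central character range over many different associated varieties.

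What the paper actually uses for the row-swap step is the much stronger combinatorial fact Lemma~\ref{swaprs}: the full Robinson--Schensted output $\RS(A)$, not just the multiset of entries, is preserved by $\bar s_i$. Since for the symmetric pyramid the word $(a_n,\dots,a_1,-a_1,\dots,-a_n)$ fed into Step~1 of Algorithm~\ref{A:BV2} is exactly $\word(A)$, preservation of $\RS$ forces the algorithm's output, hence by Corollary~\ref{C:BV2} the associated variety, to be unchanged; Corollary~\ref{C:BGKconj} then transfers finite-dimensionality. Your multiset observation is too coarse to substitute for this; you need $\RS$-invariance, not merely content-invariance, to control the associated variety across the row swaps.
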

\begin{proof}
    This follows immediately from Lemma~\ref{swaprs}, Corollary~\ref{C:BV2} and \eqref{EQ:clact}.
\end{proof}

Although we do not check here if the operations of the $c_k$ on
$\Pyr_\phi^+(\bp)$ lead to an action of $\tilde C$ on $\Pyr_\phi^+(\bp)$,
we allow ourselves to say that $A,B \in \Pyr_\phi^+(\bp)$ are {\em
$\tilde C$-conjugate} if there is a word $w$ in $c_1, \dots, c_d$ such that
$B = w \cdot A$.

To see some examples of applications of the operator $c_k$, if
\[
A =
\begin{array}{c}
\begin{picture}(80,80)
\put(20,0){\line(1,0){40}} \put(0,20){\line(1,0){80}}
\put(0,40){\line(1,0){80}} \put(0,60){\line(1,0){80}}
\put(20,80){\line(1,0){40}}
\put(20,0){\line(0,1){20}}
\put(40,0){\line(0,1){20}}
\put(60,0){\line(0,1){20}}
\put(60,20){\line(0,1){20}}
\put(80,20){\line(0,1){20}}
\put(60,40){\line(0,1){20}}
\put(80,40){\line(0,1){20}}
\put(20,80){\line(0,-1){20}}
\put(40,80){\line(0,-1){20}}
\put(60,80){\line(0,-1){20}}
\put(0,20){\line(0,1){40}}
\put(20,20){\line(0,1){40}}
\put(40,20){\line(0,1){40}}
\put(40,40){\circle*{3}}
\put(30,70){\makebox(0,0){{2}}}
\put(50,70){\makebox(0,0){{5}}}
\put(10,50){\makebox(0,0){{1}}}
\put(30,50){\makebox(0,0){{3}}}
\put(50,50){\makebox(0,0){{4}}}
\put(70,50){\makebox(0,0){{6}}}
\put(8,30){\makebox(0,0){{-6}}}
\put(28,30){\makebox(0,0){{-4}}}
\put(48,30){\makebox(0,0){{-3}}}
\put(68,30){\makebox(0,0){{-1}}}
\put(28,10){\makebox(0,0){{-5}}}
\put(48,10){\makebox(0,0){{-2}}}
\end{picture}
\end{array},
\]
then
\[
c_1 \cdot A =
\begin{array}{c}
\begin{picture}(80,80)
\put(20,00){\line(1,0){40}} \put(0,20){\line(1,0){80}}
\put(0,40){\line(1,0){80}} \put(0,60){\line(1,0){80}}
\put(20,80){\line(1,0){40}}
\put(20,0){\line(0,1){20}}
\put(40,0){\line(0,1){20}}
\put(60,0){\line(0,1){20}}
\put(60,20){\line(0,1){20}}
\put(80,20){\line(0,1){20}}
\put(60,40){\line(0,1){20}}
\put(80,40){\line(0,1){20}}
\put(20,80){\line(0,-1){20}}
\put(40,80){\line(0,-1){20}}
\put(60,80){\line(0,-1){20}}
\put(0,20){\line(0,1){40}}
\put(20,20){\line(0,1){40}}
\put(40,20){\line(0,1){40}}
\put(40,40){\circle*{3}}
\put(30,70){\makebox(0,0){{2}}}
\put(50,70){\makebox(0,0){{5}}}
\put(8,50){\makebox(0,0){{-6}}}
\put(30,50){\makebox(0,0){{1}}}
\put(50,50){\makebox(0,0){{3}}}
\put(70,50){\makebox(0,0){{4}}}
\put(8,30){\makebox(0,0){{-4}}}
\put(28,30){\makebox(0,0){{-3}}}
\put(48,30){\makebox(0,0){{-1}}}
\put(70,30){\makebox(0,0){{6}}}
\put(28,10){\makebox(0,0){{-5}}}
\put(48,10){\makebox(0,0){{-2}}}
\end{picture}
\end{array}
\]
and by conferring with \eqref{ex1} we see that
\[
c_2 \cdot A = \bar s_1 \cdot
\begin{array}{c}
\begin{picture}(80,80)
\put(0,0){\line(1,0){80}}
\put(0,20){\line(1,0){80}}
\put(20,40){\line(1,0){40}}
\put(0,60){\line(1,0){80}}
\put(0,80){\line(1,0){80}}
\put(0,0){\line(0,1){20}}
\put(20,0){\line(0,1){20}}
\put(40,0){\line(0,1){20}}
\put(60,0){\line(0,1){20}}
\put(80,0){\line(0,1){20}}
\put(60,20){\line(0,1){20}}
\put(60,40){\line(0,1){20}}
\put(20,80){\line(0,-1){20}}
\put(40,80){\line(0,-1){20}}
\put(60,80){\line(0,-1){20}}
\put(20,20){\line(0,1){40}}
\put(40,20){\line(0,1){40}}
\put(0,60){\line(0,1){20}}
\put(80,60){\line(0,1){20}}
\put(40,40){\circle*{3}}
\put(10,70){\makebox(0,0){{2}}}
\put(30,70){\makebox(0,0){{3}}}
\put(50,70){\makebox(0,0){{5}}}
\put(70,70){\makebox(0,0){{6}}}
\put(28,50){\makebox(0,0){{-4}}}
\put(50,50){\makebox(0,0){{1}}}
\put(28,30){\makebox(0,0){{-1}}}
\put(50,30){\makebox(0,0){{4}}}
\put(8,10){\makebox(0,0){{-6}}}
\put(28,10){\makebox(0,0){{-5}}}
\put(48,10){\makebox(0,0){{-3}}}
\put(68,10){\makebox(0,0){{-2}}}
\end{picture}
\end{array}
=
\begin{array}{c}
\begin{picture}(80,80)
\put(20,0){\line(1,0){40}} \put(0,20){\line(1,0){80}}
\put(0,40){\line(1,0){80}} \put(0,60){\line(1,0){80}}
\put(20,80){\line(1,0){40}}
\put(20,0){\line(0,1){20}}
\put(40,0){\line(0,1){20}}
\put(60,0){\line(0,1){20}}
\put(60,20){\line(0,1){20}}
\put(80,20){\line(0,1){20}}
\put(60,40){\line(0,1){20}}
\put(80,40){\line(0,1){20}}
\put(20,80){\line(0,-1){20}}
\put(40,80){\line(0,-1){20}}
\put(60,80){\line(0,-1){20}}
\put(0,20){\line(0,1){40}}
\put(20,20){\line(0,1){40}}
\put(40,20){\line(0,1){40}}
\put(40,40){\circle*{3}}
\put(30,70){\makebox(0,0){{2}}}
\put(50,70){\makebox(0,0){{3}}}
\put(8,50){\makebox(0,0){{-4}}}
\put(30,50){\makebox(0,0){{1}}}
\put(50,50){\makebox(0,0){{5}}}
\put(70,50){\makebox(0,0){{6}}}
\put(8,30){\makebox(0,0){{-6}}}
\put(28,30){\makebox(0,0){{-5}}}
\put(48,30){\makebox(0,0){{-1}}}
\put(70,30){\makebox(0,0){{4}}}
\put(28,10){\makebox(0,0){{-3}}}
\put(48,10){\makebox(0,0){{-2}}}
\end{picture}
\end{array}
.
\]

\subsection{Proof of the classification} \label{ss:proof}

We define
\[
\Pyr^\c_\phi(\bp) =
\{ A \in \Pyr_\phi^\leq \mid A \text{ is
justified row equivalent to column strict}\},
\]
however in type D we used a slightly modified definition of
justified row equivalent to
column strict.
In type D we say an s-frame $A$
is justified row equivalent to column strict if
it is justified row equivalent to column strict in the previous sense,
or if
the row equivalence class of the left justification of $A$
contains an element $B$
which is column strict everywhere,
except
the middle 2 boxes of
one column of $B$ contain 0.

\begin{Theorem} \label{T:main}
Let $\mf{g} = \mf{sp}_{2n}$ or $\mf{g} =\mf{so}_{2n}$.  Let $\bp$ be
an even multiplicity partition of $2n$, let $e \in \mf{g}$ be the
nilpotent element defined from the symmetric pyramid of $\bp$, and let
$A \in \Pyr^\leq_\phi(\bp)$.  Then the
$U(\g,e)$-module $L(A)$ is finite dimensional if and only if $A$ is
$\tilde C$-conjugate
an element of $\Pyr^\c_\phi(\bp)$.
\end{Theorem}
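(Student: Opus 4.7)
The plan is to translate finite dimensionality of $L(A)$ into a condition about the associated variety via Corollary~\ref{C:BGKconj}, compute that associated variety combinatorially using the Barbasch--Vogan algorithm (Algorithm~\ref{A:BV2} and Corollary~\ref{C:BV2}), and then match the combinatorics against Theorem~\ref{recs}. The $\tilde C$-action is handled separately through the $\sharp$-element machinery developed in \S\ref{ss:comp}.

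For the \emph{if} direction, suppose first that $A \in \Pyr^\c_\phi(\bp)$, so $l(A)$ is row equivalent to column strict (or, in the type D boundary case, to a table whose only column failure is a central pair of zeros). By Theorem~\ref{recs} applied to the convex left-justified frame $l(F)$, this forces $\part(\RS(l(A))) = \bp$, and since left justification does not alter $\word(A)$ the same holds for $A$. I would then run Algorithm~\ref{A:BV2} on $\lambda_A$: Step 1 produces the partition $\bq$ arising from $\RS$ of the skew-symmetric word, and the skew-symmetry of $A$ together with the even-multiplicity of $\bp$ makes the symbol $\binom{s}{t}$ of Step 2 automatically balanced, so that the reshuffling of Step 3 leaves the output $\bq'$ equal to $\bp$. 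Corollary~\ref{C:BV2} then yields $\cVA(\Ann_{U(\g)}L(\lambda_A))=\overline{G\cdot e}$, and Corollary~\ref{C:BGKconj} gives finite dimensionality of $L(A)$. For any $\tilde C$-conjugate $w\cdot A$, Lemma~\ref{L:fdpreserved} directly preserves finite dimensionality.

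For the \emph{only if} direction, assume $L(A)$ is finite dimensional. Corollary~\ref{C:BGKconj} together with Algorithm~\ref{A:BV2} forces the output of the BV algorithm on $\lambda_A$ to equal $\bp$, which via Lemma~\ref{altRS} and Lemma~\ref{altcRS} imposes strong constraints on the shape $\part(\RS(A^+))$: it must agree with $\part(A^+)$ block by block, possibly after exchanging a $(l,l)$ shape for $(l+1,l-1)$ on the pair of rows of length $l$ closest to the centre. The task is to show that whenever $\RS$ produces the ``wrong'' rectangular shape on such a central pair, one can apply $c_k$ for a suitable $k$ to repair it. Here Lemma~\ref{L:sharpdef} is decisive: it asserts that $\RS$ on a rectangular s-table has shape $(l,l)$ or $(l+1,l-1)$ exactly when the $\sharp$-element is defined, and Remark~\ref{R:sharp} shows that replacing $a_s$ by $-a_s$ via the $c$-action toggles between the two shapes. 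By moving successive row-pairs into the centre with the operators $\bar s_i$ (which are defined and preserve $\RS$ by Lemma~\ref{L:siwelldef} and Lemma~\ref{swaprs}), applying $c$ when needed, and sliding back, I construct $w$ in the $c_k$'s so that $w\cdot A$ has $\part(\RS((w\cdot A)^+))=\part((w\cdot A)^+)$; Theorem~\ref{recs} then gives $w\cdot A \in \Pyr^\c_\phi(\bp)$.

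The main obstacle is managing the interaction between the skew-symmetric structure of s-tables and the asymmetric nature of $\RS$; one must check that the BV symbol calculation really forces the row-by-row shape to match $\bp$ rather than merely matching in total. A second delicate point is the type D case: when $\bp$ is very even the associated variety only determines the $\tilde G$-orbit, not the $G$-orbit, so Corollary~\ref{C:BV2}(ii) only gives the required information up to the two-orbit ambiguity, but this is absorbed by the allowance of a central zero-pair in the type D definition of $\Pyr^\c_\phi(\bp)$, which is exactly the case where Algorithm~\ref{A:BV2} inserts a zero into $\bq$ in Step~2. Finally, one must verify that the reduction through intermediate Levi subalgebras $\g^\r$ via Corollary~\ref{C:inductsl} is compatible with the row-swap/central $c$ construction, so that each local application of Lemma~\ref{L:sharpdef} genuinely corresponds to an application of the component group element defined in \S\ref{ss:comp}.
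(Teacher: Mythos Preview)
Your proposal has the right ingredients but the ``only if'' direction skips over what is in fact the hard core of the argument. You assert that the Barbasch--Vogan output equalling $\bp$ forces $\part(\RS(A^+))$ to agree with $\part(A^+)$ ``block by block, possibly after exchanging a $(l,l)$ shape for $(l+1,l-1)$ on the pair of rows of length $l$ closest to the centre,'' but this is neither correct as stated nor deducible from Lemmas~\ref{altRS} and~\ref{altcRS} alone. First, Lemma~\ref{L:siwelldef}(i) already gives $\part(\RS(A^+))=\part(A^+)$ exactly, so there is nothing to ``exchange'' on the half-table; the object that must be analyzed is $\bq=\part(\RS(A))$ for the full skew-symmetric word. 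Second, pinning $\bq$ down to either $\bp$ or $(p_1^2,\dots,p_{r-1}^2,p_r+1,p_r-1)$ is the substantive content of the paper's proof: it proceeds by induction on $r$, uses Corollary~\ref{C:inductsl} with a suitable $\g^\r$ to strip off the outer two rows and assume inductively that $A^{1-r}_{r-1}$ is already justified row equivalent to column strict, then bounds $\bq^T$ from below via Lemma~\ref{altcRS}, bounds $\bq$ from below via the increasing rows, and finally exploits the equality of the lengths of the contents of the symbols of $\bq$ and $\bp$ (a consequence of Corollaries~\ref{C:BGKconj} and~\ref{C:BV2}) to eliminate all but the two candidates through a parity case analysis. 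None of this is in your sketch.

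Even after reaching $\bq=(p_1^2,\dots,p_{r-1}^2,p_r+1,p_r-1)$, Lemma~\ref{L:sharpdef} and Remark~\ref{R:sharp} only control $\part(\RS)$ of the middle two rows; they do not by themselves show that $\part(\RS(c_d\cdot A))=\bp$. The paper closes this by constructing $p_r$ disjoint decreasing chains of length $2r$ in $\word(c\cdot A')$ (where $A'=\bar s_1\cdots\bar s_{r-1}A$): one takes $p_r-1$ chains and two extra chains of length $2r-1$ coming from \eqref{ptranspose}, rearranges them so that the two short chains pass through $a_j$ and $-a_j$ (the $\sharp$-element and its negative), and then after the $c$-swap splices these into a single chain of length $2r$. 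This chain argument, together with the two-swap bound $\bq^\sharp\le\bq$ from Remark~\ref{R:sharp}, is what forces $\bq^\sharp=\bp$; you have no substitute for it. Finally, your handling of the type D very even case in the ``if'' direction is off: the two-orbit ambiguity in Corollary~\ref{C:BV2}(ii) is not absorbed by the central zero-pair clause but is resolved by the a priori containment $\overline{G\cdot e}\subseteq\cVA(\Ann L(\lambda_A))$ from \cite[Theorem 1.2.2(v)]{Lo1}.
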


\begin{proof}
We associate $\lambda_A$ to $A$ as in \S\ref{ss:hwnotn}. First
observe that \cite[Lemma 2.4]{Jo2} reduces the case that $\lambda_A$
is non-regular to the case that $\lambda_A$ is regular, so we assume
that $\lambda_A$ is regular.
Let $\bp = (p_1^2,p_2^2,\dots,p_r^2)$,
where $p_i \ge p_{i+1}$ for all $i$.  Let $p_{i_1},\dots,p_{i_d}$ be
the minimal distinct parts of $\bp$ that are odd (respectively even) when $\g = \so_{2n}$
(respectively $\sp_{2n}$).
We write $c_k$ for the component group action corresponding
to $p_{i_k}$.

Suppose $A$ is justified row equivalent to column strict.
In all cases except the type  D case where $\bp$ is very even,
$L(A)$ is
finite dimensional by Theorem \ref{recs} and Corollaries~\ref{C:BGKconj} and \ref{C:BV2}.
In the type D case where $\bp$ is very even,
by \cite[Theorem 1.2.2 (v)]{Lo1}
we have that $\overline {G.e} \subseteq \mathcal{VA}(\Ann L(\lambda_A))$.
Also
Theorem \ref{recs} and Corollary \ref{C:BV2} give that
$\mathcal{VA}(\Ann L(\lambda_A))$ is the closure of one of the two
nilpotent $G$-orbits
corresponding to the partition $\bp$,
hence we have that
$\overline {G.e} = \mathcal{VA}(\Ann L(\lambda_A))$,
so by Corollary \ref{C:BGKconj} we have that $L(A)$ is finite dimensional.
So in all cases for any $w \in \tilde C$ we have that $L(w \cdot A)$ is finite dimensional by
Lemma~\ref{L:fdpreserved}.

To prove the converse we assume that $L(A)$ is finite dimensional.
Let $2r$ be the number of rows in
$A$. We proceed by induction on $r$. In the case that $r = 1$,
suppose the $A$ has row length $l$. Then by using
Corollary~\ref{C:BV2} one checks that $L(A)$ is finite dimensional
if and only if $\part(\RS(A)) = (l,l)$, or
$\part(\RS(A))
=(l+1,l-1)$ and $l$ is even and $\mf{g} = \mf{sp}_{2n}$,
or
$\part(\RS(A))
=(l+1,l-1)$ and $l$ is odd and $\mf{g} = \mf{so}_{2n}$.
In the former case there is nothing to prove: $A$ is justified row equivalent
to column strict by Theorem~\ref{recs}. In both
of the other
cases by Lemma~\ref{L:sharpdef} we have that $\part(\RS(c_1
\cdot A)) = (l,l)$, so again by Theorem~\ref{recs} $c_1
\cdot A$ is justified row equivalent to column strict.

Now suppose $r > 1$ and let $\r = \left\lan \sum_{i=n-p_r+1}^{n}
f_{i,i} \right\ran$. Then $\mf{g}^\r \iso \mf{gl}_{p_r} \oplus
\mf{g}^\phi_{2n-2p_r}$. Since $L^\r(\Lambda_A)$ is a finite
dimensional $U(\mf{g}^\r,e)$-module by Corollary~\ref{C:inductsl},
we get that $L(A^{1-r}_{r-1})$ is a finite dimensional $U(\mf{g}',
e')$-module, where $\mf{g}' \iso \mf{g}^\phi_{2n-2p_r}$ and $e' \in
\g'$ is the nilpotent element of $\g'$ defined from the symmetric
pyramid of $\bp' = (p_1^2,\dots,p_{r-1}^2)$. So by induction we can
apply some word $w$ in the elements of the set $\{c_1, \dots,
c_{d'}\}$, where $d' = d-1$ if $i_d = r$ and $d' = d$ is $i_d < r$
to $A$ to yield an s-frame $B$ which is justified row equivalent to
column strict. By replacing $A$ by $B$, and using
Theorem~\ref{recs}, we can assume that $A_{r-1}^{1-r}$ is justified
row equivalent to column strict.

Using Lemma~\ref{L:siwelldef} and Theorem~\ref{recs}, we see that if
one adjusts $A$ so that the middle $2r-2$ rows are left justified,
row $-r$ is left justified with row
$-r+1$, and row $r$ is right justified with
row $r-1$, then the resulting diagram is row equivalent to column
strict. Note that if $p_r = p_{r-1}$, then this implies that $A$
itself is justified row equivalent to column strict, so we assume
that this is not the case.

Let $\bq = \part(\RS(A))$ .
Now the above discussion and Lemma~\ref{altcRS} show that
\[
\bq^T \ge ((2r-1)^{2 p_r}, (2r-2)^{p_{r-1} - 2p_r},
        (2r-4)^{p_{r-2} - p_{r-1}}, \dots, 2^{p_1 - p_2})
\]
in the case that $p_r \le p_{r-1}/2$ and that
\[
\mathbf{q}^T \ge ((2r)^{2 p_r - p_{r-1}}, (2r-1)^{2(p_{r-1} -p_r)},
        (2r-4)^{p_{r-2} - p_{r-1}}, \dots, 2^{p_1 - p_2})
\]
in the case that $p_r > p_{r-1}/2$. This implies that
\[
  \mathbf{q} \le (p_1^2, \dots, p_{r-1}^2, 2 p_r)
\]
in the case that $p_r \le p_{r-1}/2$ and that
\[
\mathbf{q} \le (p_1^2, \dots, p_{r-1}^2, p_{r-1}, 2 p_r - p_{r-1})
\]
in the case that $p_r > p_{r-1}/2$.
Since $A$ has increasing rows we also have that
\[
\bq \ge (p_1^2, \dots, p_r^2).
\]
The content of $\bq$ is defined at the end of  \S\ref{ss:BV}.  From
Corollaries~\ref{C:BGKconj} and \ref{C:BV2} it follows that the
length of the content of $\bq$ is the same as the length of the content of $\bp$.

Suppose for this paragraph that $\g$ is of type C.  The content of
$\bp$ has length $2r+1$, which implies that
\[
\bq =(p_1^2, \dots, p_{r-1}^2, 2 p_r-a,a)
\]
for some $a$ where $1 \le a \le p_r$ in the case that $p_r \le
p_{r-1}/2$ and that
\[
\bq = (p_1^2, \dots, p_{r-1}^2, p_{r-1}-a, 2 p_r - p_{r-1}+a)
\]
for some $a$ where $0 \le a \le p_{r-1} - p_r$ in the case that $p_r
> p_{r-1}/2$.  The 3 first entries in
the content of $\bp$ are
\[
  \left(0, \frac{p_r+1}{2}, \frac{p_r+1}{2}\right)
\]
when $p_r$ is odd and
\[
  \left(0, \frac{p_r}{2}, \frac{p_r}{2}+1\right)
\]
when $p_r$ is even.  This implies that
\[
  a =
  \begin{cases}
    p_r & \text{ if $p_r \le p_{r-1}/2$, $p_r$ is odd,
      and $a$ is odd;} \\
    p_r -1 & \text{ if $p_r \le p_{r-1}/2$, $p_r$ is even,
      and $a$ is odd;} \\
    p_r & \text{ if $p_r \le p_{r-1}/2$, $p_r$ is even,
      and $a$ is even;} \\
    p_{r-1} - p_r & \text{ if $p_r > p_{r-1}/2$,
         $p_r$ is odd, and $p_{r-1} - a$ is odd;} \\
    p_{r-1} - p_r - 1  & \text{ if $p_r > p_{r-1}/2$,
         $p_r$ is even, and $p_{r-1} - a$ is odd;} \\
    p_{r-1} - p_r & \text{ if $p_r > p_{r-1}/2$,
         $p_r$ is even, and $p_{r-1} - a$ is even.}
  \end{cases}
\]
It also implies that no such $a$ exists when
       $p_r \le p_{r-1}/2$, $p_r$ is odd, and $a$ is even;
   or when  $p_r > p_{r-1}/2$, $p_r$ is odd, and
      $p_{r-1} -a$ is even.
Putting this all together shows that in all cases
\begin{equation} \label{e:case1}
\bq = (p_1^2, \dots, p_{r-1}^2, p_r^2),
\end{equation}
or
\begin{equation} \label{e:case2}
\bq = (p_1^2, \dots, p_{r-1}^2, p_r+1, p_r-1).
\end{equation}
Moreover, \eqref{e:case2} is only possible if $p_r$ is even.

If $\g$ is of type D, then similar arguments show that $\bq$ must be
as in \eqref{e:case1} or \eqref{e:case2}, and \eqref{e:case2} can
only occur if $p_r$ is odd.

If \eqref{e:case1} holds, then $A$ is justified row equivalent to
column strict by Theorem~\ref{recs}. So for the rest of the proof we
assume that \eqref{e:case2} holds.  We also assumed above that $p_r
< p_{r-1}$, which implies that $i_d = r$, so we can apply $c_d$ to
$A$.

For the arguments below it is useful to note that
\begin{equation} \label{ptranspose}
    \mathbf{q}^T = ((2r)^{p_r - 1}, (2r-1)^2, (2r-2)^{p_{r-1} - p_r -1},
      (2r-4)^{p_{r-2}-p_{r-1}}, \dots, 2^{p_1 - p_2}).
\end{equation}

Let $A' = \bar s_{1} \bar s_{2} \dots \bar s_{r-1} A$, so that the
shortest rows of $A'$ are the middle two, and they have length
$p_r$. Also let $F'$ be the frame of $A'$.

Let $\br = \part(\RS(A'^{-1}_{1}))$.  We aim to show that $\br =
(p_r+1, p_r-1)$.

To do this first note that $\br \ge (p_r, p_r)$. If the first part
of $\br$ were larger than $p_r+1$, then since the rows of $A'$ are
increasing, we would have
\[
\bq > (p_1^2, \dots, p_{r-1}^2, p_r+1, p_r-1),
\]
which contradicts \eqref{e:case2}.

Now suppose that $\br = (p_r^2)$. Using Lemma~\ref{L:siwelldef} and
Theorem~\ref{recs} we see that $A'^+$ is justified row equivalent to
column strict, which implies that we can find $p_r$ disjoint
descending chains in $\word(A')$ of length $2r$. This implies that
$\bq^T \ge ((2r)^{p_r}, 1^{2n-2 r p_r})$ by Lemma~\ref{altRS}, which
contradicts \eqref{ptranspose}.

Now the arguments in the
previous two paragraphs prove that $\br = (p_r+1, p_r-1)$ as desired.
Let $a_1 < a_2 < \dots < a_{p_r}$ be the
entries in row $-1$ of $A_1'^{-1}$. By Lemma~\ref{L:sharpdef} we have
that the $\sharp$-element of $(a_1, \dots, a_{p_r})$ is defined, and we
let $a_j$ be the $\sharp$-element.  Thus $c \cdot A'$ is defined,
and by Lemma~\ref{L:sharpdef} we have that $\part(\RS(c \cdot
A'^{-1}_1 )) = (p_r,p_r)$.

Let $\mathbf{q}^\sharp = \part(\RS(c \cdot A'))$. Note that if
$u,w$ are words of integers and $a, b \in \Z$ with $a<b$, then
$\RS(u a b w) \ge \RS(u b a w)$ by Lemma~\ref{altRS}, because every
collection of disjoint increasing sequences in $u b a w$ is a
collection of disjoint increasing sequences in $u a b w$. Now
Remark~\ref{R:sharp} says that precisely 2 such swaps are required
to get from something Knuth-equivalent to $\word(A')$ to something
Knuth-equivalent to $\word(c \cdot A')$.  Therefore, we have $\bq^\sharp
\le \bq$.

Now we have that
\[
\bq^\sharp = (p_1^2, \dots, p_{r-1}^2, p_r+1, p_r-1)
\]
or
\[
\bq^\sharp = (p_1^2, \dots, p_{r-1}^2, p_r, p_r).
\]

>From Remark~\ref{R:altRS} and \eqref{ptranspose}, we see that we can
find $p_r-1$ disjoint descending chains in the $\word(A')$ of length
$2r$, and we can find $2$ other decreasing chains of length $2r-1$
which are disjoint from the chains of length $2 r$. These chains
must include every number which occurs in $A'^{-1}_{1}$. Furthermore
it is possible to adjust them so that one of the chains of length
$2r-1$ contains $a_j$, and other chain of length $2r -1$ contains
$-a_j$. To do this explicitly note that the existence of the chains
of length $2r$ and $2r-1$ implies that we can form $p_r$ disjoint
descending chains of length $r$ which end in $a_1, \dots, a_{p_r}$,
as well as $p_r$ disjoint descending chains of length $r$ which
start in $-a_{p_r}, \dots, -a_1$. Since the $\sharp$-element is
defined,  we can assign to each $i  \in \{1, \dots, p_r\}\setminus
\{j\}$ some $i' \in \{1, \dots, p_r\}\setminus \{j\}$  such that
$a_i > -a_{i'}$.  To be clear, this assignment can be made so that
$i'_1 = i'_2$ implies that $i_1 = i_2$. Also note that in the type D
case our zero convention from Algorithm \ref{A:BV2} does not affect
things here since in this case $p_r$ is odd, so by Lemma
\ref{L:sharp1} we can make this assignment so that if $a_i = 0$,
then $a_{i'} \neq 0$. Thus we can join $p_r-1$ of these chains
together at $a_i, -a_{i'}$ for $i \in \{1, \dots, p_r\} \setminus
\{j\}$, to get $p_r -1$ disjoint descending chains of length $2 r$,
and still have 2 other disjoint descending chains of length $r$, one
of which ends in $a_j$, and one of which starts in $-a_j$.

Now after $c$ is applied to $A'^{-1}_1$ we have that $a_j$
now occurs in row
$1$, while $-a_j$ now occurs in row $-1$. All of the descending chains
created above still exist, and now we can join the last 2 chains
containing $-a_j$ and $a_j$ to form one more descending chain of
length $2 r$. So $(\bq^\sharp)^T$ is larger than or equal to the
partition $((2r)^{p_r},1^{2n-2rp_r})$, which implies that
$\bq^\sharp = (p_1^2, \dots, p_r^2)= \bp$.

Using Lemma~\ref{swaprs}, we see that $\RS(c_d \cdot A) = \bp$.
Hence, by Theorem \ref{recs} it is justified row equivalent to
column strict.
\end{proof}

Theorem \ref{T:main} and Lemma \ref{L:srecs} immediately give the following
corollary:
\begin{Corollary}
    Let $\mf{g}$, $\bp$, $A$, and $e$ be as in Theorem \ref{T:main},
    and also suppose that all the
parts of $\bp$ have the same parity.
Then the
$U(\g,e)$-module $L(A)$ is finite dimensional if and only if $A$ is
$\tilde C$-conjugate
to an s-table
which is row equivalent to a column strict s-table.
\end{Corollary}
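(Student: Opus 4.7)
The plan is to derive this corollary as a direct combination of Theorem~\ref{T:main} and Lemma~\ref{L:srecs}, together with a simple observation about symmetric pyramids whose rows all have the same parity.

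First I would invoke Theorem~\ref{T:main}: $L(A)$ is finite dimensional if and only if $A$ is $\tilde C$-conjugate to some $B \in \Pyr^\c_\phi(\bp)$. By definition of $\Pyr^\c_\phi(\bp)$, this says $B \in \Pyr^\le_\phi(\bp)$ and $B$ is justified row equivalent to column strict (with the slight extra allowance in type D of paired zeros in the middle two boxes of the middle column). Since every part of $\bp$ has the same parity, every row of the symmetric pyramid has length of that common parity, so the symmetric pyramid is itself a justified s-frame. Therefore the left-justification operation $l(\cdot)$ acts trivially on elements of $\Pyr^\leq_\phi(\bp)$, and ``justified row equivalent to column strict'' coincides with ``row equivalent to column strict'' in the sense of ordinary tables.

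Next I would translate this table-theoretic condition into an s-table condition using Lemma~\ref{L:srecs}. The lemma applies precisely because all rows of the symmetric pyramid have the same parity, and it tells us that for $B \in \Pyr^\leq_\phi(\bp)$ the existence of a column strict ordinary table in $\bar B$ is equivalent to the existence of a column strict s-table in the s-table row equivalence class $\bar B^s$. Combining this with the output of Theorem~\ref{T:main}, we conclude that $L(A)$ is finite dimensional if and only if $A$ is $\tilde C$-conjugate to an s-table whose s-table row equivalence class contains a column strict s-table, which is exactly the statement of the corollary.

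The only wrinkle is the type D modification, where ``column strict'' permits the middle two boxes of the middle column to both contain $0$ in place of the usual strict inequality there. This causes no difficulty: the proof of Lemma~\ref{L:srecs} proceeds by a symmetric adaptation of Algorithm~\ref{A:findcs} that simultaneously inserts $+a_i$ and $-a_i$, and inserting two copies of $0$ into the two central boxes of the middle column is compatible with this procedure. Hence the equivalence extends verbatim to the modified notion of column strict, and there is no real obstacle in the argument; the corollary is, as stated, an immediate consequence of the two cited results.
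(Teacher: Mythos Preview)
Your proposal is correct and matches the paper's own approach: the paper simply states that the corollary follows immediately from Theorem~\ref{T:main} and Lemma~\ref{L:srecs}, which is exactly the combination you describe. Your handling of the type~D zero convention is in fact more explicit than the paper's, which does not spell out that detail.
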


Next we give a corollary of Theorem \ref{T:main} saying that all finite dimensional
irreducible $U(\g,e)$-modules with integral central character can be obtained by restricting
certain $U(\h)$-modules when all parts of $\p$ have the same parity.  In this case $\k = 0$, so $\tilde \p
= \p$.  The {\em Miura map} is by definition
the composition of the inclusion $U(\g,e) \into U(\p)$ with the surjection $U(\p) \onto U(\h)$.
It is known that the Miura map is injective, see \cite[Remark 2.2]{Pr2}, which allows us to restrict
$U(\h)$-modules to $U(\g,e)$.

\begin{Corollary} \label{C:evencase}
    Let $\mf{g}$, $\bp$, and $e$ be as in Theorem \ref{T:main},
    and also suppose that all the
parts of $\bp$ have the same parity.
Let $L$ be a finite dimensional irreducible $U(\g,e)$-module with integral
central character.  Then there exists a finite dimensional $U(\h)$-module $M$ such that $L$
is a subquotient of the restriction of $M$ to $U(\g,e)$.
\end{Corollary}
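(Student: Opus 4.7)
By Theorem~\ref{T:main}, we may write $L \iso L(A)$ for some $A \in \Pyr_\phi^\leq(\bp)$ which is $\tilde C$-conjugate to some $A' \in \Pyr_\phi^\c(\bp)$; fix a word $w$ in the generators $c_1,\dots,c_d$ of $\tilde C$ with $A = w \cdot A'$. Since all parts of $\bp$ share a common parity, Lemma~\ref{L:srecs} yields a column strict s-table $B \in \overline{A'}^s$. In this setting the Levi subalgebra $\h = \g(0)$ decomposes as a direct sum of one $\gl$-factor for each pair of symmetric columns of the symmetric pyramid, together with a single $\sp$ or $\so$ factor arising from the middle column when the common parity is odd. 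The column strict condition, combined with the skew-symmetry of $B$, translates exactly into the statement that $\lambda_B$ (after the usual $\rho$-shift) is dominant integral for $\h$ with respect to the Borel $\h \cap \b$; let $M(B)$ denote the resulting finite dimensional irreducible $U(\h)$-module.

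To show that the restriction of $M(B)$ via the Miura map $U(\g,e) \into U(\p) \onto U(\h)$ has $L(A')$ as a subquotient, I would consider the top $\t^e$-weight space of $M(B)$, which contains the $\h$-highest weight line. Positive $\t^e$-weight elements of $U(\g,e)$ annihilate this space, so it becomes a module over the zero-weight subalgebra $U(\g,e)^0$. To descend this further to a $U(\g_0,e)$-action via the isomorphism in Theorem~\ref{T:Levi} with $\r=0$ and $\s=\t^e$, one verifies that $U(\g,e)^0_\sharp$ acts trivially by tracking the images of its generators through the Miura map into the nilpotent part of the Borel of $\h$. Since $e$ is of standard Levi type here, $U(\g_0,e) \iso S(\t)^{W_0}$ via $\xi_{-\tilde\rho}$, and a careful reconciliation of the shifts in Theorem~\ref{T:Levi} and in $\xi_{-\tilde\rho}$ with the $\h$-highest weight of $M(B)$ identifies the resulting character of $S(\t)^{W_0}$ with the one corresponding to $\Lambda_{A'} = \Lambda_B$. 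The $U(\g,e)$-submodule of $M(B)$ generated by the relevant eigenline is then a highest weight module of type $\Lambda_{A'}$, with irreducible head $L(A')$.

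For the general case $A = w \cdot A'$, lift each $c_k$ occurring in $w$ to the explicit element of $\tilde H^e$ from \S\ref{ss:comp}, obtaining $\dot w \in \tilde H^e \sub \tilde H$. Since $\dot w$ normalizes $\h$, conjugation by $\dot w$ is an automorphism of $U(\h)$, and the twist $\dot w \cdot M(B)$ is again a finite dimensional $U(\h)$-module. The $\tilde C$-action on isomorphism classes of finite dimensional $U(\g,e)$-modules is induced from the adjoint action of $\tilde H^e$ and is compatible with the Miura map, so the restriction of $\dot w \cdot M(B)$ to $U(\g,e)$ is the $w$-twist of the restriction of $M(B)$; hence $L(A) \iso w \cdot L(A')$ is a subquotient. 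The main obstacle is the identification in the second paragraph of the character of $S(\t)^{W_0}$ on the top $\t^e$-weight space of $M(B)$: this amounts to a careful bookkeeping exercise reconciling the various $\rho$-shifts appearing in the definition of $M(B)$, in $\xi_{-\tilde\rho}$, and in the weight-labelling conventions of \S\ref{ss:recaphw}.
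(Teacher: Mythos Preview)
Your approach is essentially identical to the paper's: pick a column-strict s-table $B$ row-equivalent to $A'$, form the finite dimensional irreducible $U(\h)$-module with suitably shifted highest weight $\lambda_B$, observe that its highest weight vector lies in a maximal $\t^e$-weight space and hence generates a highest weight $U(\g,e)$-submodule of type $\Lambda_{A'}$, and then twist by an element of $\tilde H^e$ for the general case.

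The one point to flag is your final step. The isomorphism $L(A) \iso w \cdot L(A')$ asserts that the combinatorial operators $c_k$ on pyramids (which involve the row swaps $\bar s_i$ in addition to the middle-rows operator $c$) agree with the representation-theoretic twist by the corresponding elements of $\tilde C$. Within this paper only the special case for $c$ itself is established, via \eqref{EQ:clact} together with Lemma~\ref{L:cact1}; the compatibility for the full $c_k$ is explicitly deferred to the forthcoming \cite{BroG}, and indeed the paper's own proof of this corollary invokes \cite{BroG} at precisely this point. You should cite it likewise rather than treating the compatibility as automatic from the Miura map.
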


\begin{proof}
First let $A \in \Pyr^\c_\phi(\bp)$,
so by Lemma \ref{L:srecs}
there exists $B \in \bar A$ which is column strict as an s-table.

We claim that the weight $\lambda_B \in \t^*$ is dominant for $\h$
with respect to the Borel subalgebra $\b^h$ of $\h = \g^h$. To see
this, note that
\[
\tilde \rho + \gamma = \frac{1}{2} \left(
\sum_{\substack{\alpha \in \Phi_+ \\ \g_{\alpha} \in \h}}
\alpha
+
\sum_{i=1}^r \beta_i
\right).
\]
Note that the first sum is the usual ``choice of  $\rho$'' for $\h$,
and the second sum is orthogonal to every root occurring in the
first sum, so $\tilde \rho + \gamma$ is a ``choice of $\rho$'' for
$\h$. So the highest weight $U(\h)$-module with highest weight
$\lambda_B - (\tilde \rho+ \gamma)$ with respect to the Borel
subalgebra $\b^h$ is finite dimensional. We denote this module by $M
= L_\h(\lambda_B)$ and let $v_+ \in M$ be a highest weight vector.
We can restrict $M$ to a $U(\g,e)$-module through the Miura map.  It
is clear that $v_+ \in M$ lies in a maximal $\t^e$-weight space
$M_\mu$, where $\mu \in (\t^e)^*$. Thus $M_\mu$ can be viewed as a
$U(\g_0,e)$-module as in \S\ref{ss:recaphw}.  As such it is clear
that $v_+$ spans a one dimensional $U(\g_0,e)$-submodule. Through
the isomorphism $\xi_{\tilde \rho} : U(\g_0,e) \to S(\t)^{W_0}$ from
\eqref{e:xi}, this identifies with the $S(\t)^{W_0}$-module
$V_{\Lambda_A}$.  It follows that the $U(\g,e)$-submodule of $M$
generated by $v_+$ is a highest weight $U(\g,e)$-module of type
$\Lambda_A$, and thus has a quotient isomorphic to $L(A)$.

Now let $A \in \Pyr^\leq_\phi(\bp)$ such that $L(A)$ is finite
dimensional.  Then by Theorem~\ref{T:main}, there exists $c \in \tilde C$
such that $c \cdot A$ is row equivalent to column strict. Therefore,
$L(c \cdot A)$ is isomorphic to a subquotient of a finite
dimensional $U(\h)$-module $M$ by the previous paragraph.  We write
$c \cdot M$ for the $U(\h)$-module obtained by twisting $M$ by $c$,
which is defined similarly to the case of $U(\g,e)$-modules in
\S\ref{ss:hwcomp}.
Now using the result in \cite{BroG} that $L(A) \iso c \cdot L(c
\cdot A)$, we see that $L(A)$ is isomorphic to a subquotient of $c
\cdot M$.  (We note that $c \cdot M$ is isomorphic to $M$ if $c \in
C$.)
\end{proof}

\begin{Remark} \label{R:wrong}
    The hypothesis that $A$ in Theorem \ref{T:main} has entries
    all from $\Z$, or all from
$\Z + \frac{1}{2}$, ensures that $\lambda_A \in \t^*_\Z$.  In the example below we
demonstrate that this hypothesis is necessary.  As usual for complex numbers $x,y$  we
say $x < y$ if $y - x \in \Z_{>0}$.
If we allow tables to have entries to have entries from $\C$,
then we say that a (justified) table is column strict provided its columns are decreasing with respect
to this partial order.
Now if $\mf{g} = \mf{sp}_4$ and $e \in \g$ has Jordan type $(2^2)$,
then by \cite[Theorem 1.2]{Bro2} the $U(\mf{g},e)$-module $L(A)$, where
\[
A=
        \begin{array}{c}
\begin{picture}(40,40)
\put(0,0){\line(1,0){40}}
\put(0,20){\line(1,0){40}} \put(0,40){\line(1,0){40}}
\put(0,0){\line(0,1){20}} \put(20,0){\line(0,1){20}}
\put(40,0){\line(0,1){20}} \put(0,20){\line(0,1){20}}
\put(20,20){\line(0,1){20}} \put(40,20){\line(0,1){20}}
\put(10,10){\makebox(0,0){{$-1$}}} \put(30,10){\makebox(0,0){{$-\pi$}}}
\put(10,30){\makebox(0,0){{$\pi$}}} \put(30,30){\makebox(0,0){{1}}}
\put(20,20){\circle*{3}}
\end{picture}
\end{array},
\]
is finite dimensional, however $A$ is not $\tilde C$-conjugate
to a row equivalent to column strict s-table.

\end{Remark}

The following corollary is immediate from Theorem~\ref{T:main} and
the map $\cdot ^\dagger$ from \eqref{eq:dagger}.
Recall that in the type D case
$C \subset \tilde C$ is a subgroup of index 2.
With this in mind,
we use
$\Pyr_{+,s}^\c(\bp)$
to denote
the elements $A$ of $\Pyr_+^\c(\bp)$
for which there exists $c \in \tilde C \setminus C$
such that $c \cdot A = A$.
We also note that
if $A$ has no boxes filled with $0$ then
$A \notin \Pyr_{+,s}^\c(\bp)$.

\begin{Corollary} \label{C:prim}
Let $\g = \sp_{2n}$ or $\so_{2n}$, and let $e$ be an even
multiplicity nilpotent element of $\g$.
If $\g = \sp_{2n}$, then
\[
\{\Ann_{U(\g)}L(\lambda_A) \mid A \in \Pyr_-^\c(\bp) \}
\]
is a complete set of
pairwise distinct
primitive ideals of $U(\g)$ with integral central character
and associated variety $\bar{G \cdot e}$.

If $\g = \so_{2n}$, then
\begin{align*}
    \{\Ann_{U(\g)} & L(\lambda_A)   \mid A \in \Pyr_{+,s}^\c(\bp)\} \\
&\cup
\{\Ann_{U(\g)}L(\lambda_A)  \mid A \in \Pyr_+^\c(\bp) \setminus  \Pyr_{+,s}^\c(\bp) \} \\
&\cup
\{\Ann_{U(\g)}L(\lambda_{c_1 \cdot A}  \mid A \in \Pyr_+^\c(\bp) \setminus  \Pyr_{+,s}^\c(\bp) \}
\end{align*}
is a complete set of
pairwise distinct
primitive ideals of $U(\g)$ with integral central character
and associated variety $\bar{G \cdot e}$.
\end{Corollary}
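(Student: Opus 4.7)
The plan is to apply Theorem~\ref{T:main} together with Losev's surjection $\cdot^\dagger : \Prim_0 U(\g,e) \onto \Prim_e U(\g)$ from \eqref{eq:dagger}. Properties (i) and (ii) of \S\ref{ss:skrylos} ensure that primitive ideals of $U(\g)$ with integral central character and associated variety $\bar{G \cdot e}$ are in bijection with $C$-orbits on isomorphism classes of finite dimensional irreducible $U(\g,e)$-modules of integral central character, where $C = G^e/(G^e)^\circ$. By Theorem~\ref{T:main} combined with the fact that any finite dimensional irreducible $U(\g,e)$-module is isomorphic to some $L(A)$ (see \S\ref{ss:hwnotn}), these isomorphism classes are in bijection, via $A \mapsto L(A)$, with $\tilde C \cdot \Pyr^\c_\phi(\bp)$. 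Hence the task reduces to enumerating $C$-orbits on this latter set.

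In the $\sp_{2n}$ case we have $G = \tilde G$, so $C = \tilde C$ and $C$-orbits coincide with $\tilde C$-orbits. Every $\tilde C$-orbit on $\tilde C \cdot \Pyr^\c_-(\bp)$ contains an element of $\Pyr^\c_-(\bp)$ by Theorem~\ref{T:main}, and distinct elements of $\Pyr^\c_-(\bp)$ lie in distinct $\tilde C$-orbits via the correspondence between the combinatorial $\tilde C$-operation on $\Pyr^+_-(\bp)$ defined in \S\ref{ss:comp} and the natural $\tilde C$-action on isomorphism classes of finite dimensional irreducibles (which, for the generators $c_k$, follows from Lemma~\ref{L:cact1} combined with \eqref{EQ:clact}). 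This yields the parameterization in type~C.

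In the $\so_{2n}$ case, $[\tilde C : C] \leq 2$, with $[\tilde C : C] = 2$ precisely when $\bp$ has an odd part. In that situation a coset representative for $\tilde C/C$ can be taken to be the generator $c_1$ of $\tilde C$ associated to the smallest odd part of $\bp$. A $\tilde C$-orbit $\tilde C \cdot A$ (with $A \in \Pyr^\c_+(\bp)$) equals a single $C$-orbit if and only if the $\tilde C$-stabilizer of some element in the orbit meets $\tilde C \setminus C$, which is precisely the defining condition $A \in \Pyr^\c_{+,s}(\bp)$. Otherwise $\tilde C \cdot A$ decomposes into two $C$-orbits with representatives $A$ and $c_1 \cdot A$. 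The three sets in the statement therefore enumerate all $C$-orbits once each, giving the desired complete set of pairwise distinct primitive ideals; when $\bp$ has no odd parts the last two sets collapse and the argument reduces to the $C = \tilde C$ case.

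The main subtlety is the correspondence between the combinatorial $\tilde C$-operation on $\Pyr^+_\phi(\bp)$ from \S\ref{ss:comp} and the natural $\tilde C$-action on isomorphism classes of finite dimensional irreducible $U(\g,e)$-modules; this is what makes the listed representatives pairwise non-conjugate. The full group-theoretic correspondence is the subject of the forthcoming \cite{BroG}, but only its restriction to the generators $c_k$ is needed here, and this instance follows from \cite[Theorem 6.1]{Bro2} together with Lemma~\ref{L:cact1} and \eqref{EQ:clact}.
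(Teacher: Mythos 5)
Your argument is correct and follows exactly the route the paper intends: the paper gives no proof at all, merely stating that the corollary is ``immediate from Theorem~\ref{T:main} and the map $\cdot^\dagger$ from \eqref{eq:dagger}.'' You have spelled out what ``immediate'' means --- use of properties (i) and (ii) of $\cdot^\dagger$ from \S\ref{ss:skrylos} to reduce to counting $C$-orbits of finite dimensional irreducibles of integral central character, Theorem~\ref{T:main} to identify these with $\tilde C\cdot\Pyr^\c_\phi(\bp)$, the identification $C=\tilde C$ in type C, and in type D the standard index-$2$ orbit dichotomy (a $\tilde C$-orbit is a single $C$-orbit iff the $\tilde C$-stabilizer meets $\tilde C\setminus C$, which is precisely the definition of $\Pyr^\c_{+,s}(\bp)$, with $c_1\in\tilde C\setminus C$ furnishing the second representative otherwise). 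Two small cautions, applicable equally to the paper's own ``immediate'': (a) the pairwise-distinctness claim requires that distinct elements of $\Pyr^\c_\phi(\bp)$ lie in distinct $C$-orbits, which is asserted rather than derived here (the cited correspondence alone does not give uniqueness of the column-strict representative in a $\tilde C$-orbit); (b) the equality $c_k\cdot L(A)\cong L(c_k\cdot A)$ is established in the paper only for the middle-rows generator $c$ via \eqref{EQ:clact}, and for general $c_k$ (which involve the $\bar s_i$ conjugations) it is deferred to \cite{BroG}, so your claim that the needed instance already follows from \cite[Theorem 6.1]{Bro2}, Lemma~\ref{L:cact1} and \eqref{EQ:clact} slightly overstates what is in hand. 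Neither point is a deviation from the paper, which leaves both implicit, so your proof matches the intended one.
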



\begin{thebibliography}{CPS33}

\bibitem[BB]{BB}
J. Brown and J. Brundan,
{\em Elementary invariants for centralizers of nilpotent matrices},
 J. Austral. Math. Soc. {\bf 86} (2009), 1--15,
{\tt math.RA/0611024}.

\bibitem[Bro1]{Bro1} J.~Brown,
{\em Twisted Yangians and finite W-algebras},  Transform.\ Groups
{\bf 14} (2009),  no.\ 1, 87--114.

\bibitem[Bro2]{Bro2} \bysame,
{\em Representation theory of rectangular finite $W$-algebras},
preprint, arXiv:1003.2179v1 (2010).

\bibitem[BroG]{BroG} J.~Brown and S.~M.~Goodwin,
{\em Changing the highest weight theory for finite $W$-algebras},
in preparation (2010).

\bibitem[Bru1]{Bru1} J.~Brundan, {\em Centers of degenerate
cyclotomic Hecke algebras and parabolic category $\cO$}, Represent.\
Theory {\bf 12} (2008), 236--259.

\bibitem[Bru2]{Bru2} \bysame,
{\em Symmetric functions, parabolic category $\cO$, and the Springer
fiber}, Duke Math.\ J.\ {\bf 143} (2008), 41--79.

\bibitem[BruG]{BruG}
J.~Brundan and S.~M.~Goodwin, {\em Good grading polytopes}, Proc.\
London Math.\ Soc.\ {\bf 94} (2007), 155--180.

\bibitem[BGK]{BGK} J.~Brundan, S.~M.~Goodwin and A.~Kleshchev,
{\em Highest weight theory for finite $W$-algebras}, Internat.\
Math.\ Res.\ Notices, {\bf 15} (2008), Art.\ ID rnn051.

\bibitem[BK1]{BK1}
J.~Brundan and A.~Kleshchev, {\em Shifted Yangians and finite
$W$-algebras}, Adv.\ Math.\ {\bf  200} (2006), 136--195.

\bibitem[BK2]{BK2}
\bysame, 
{\em Representations of shifted Yangians and finite $W$-algebras},
Mem.\ Amer.\ Math.\ Soc. {\bf 196} (2008).

\bibitem[BK3]{BK3} \bysame,
{\em Schur-Weyl duality for higher levels},  Selecta Math {\bf 14}
(2008), 1--57 (2008).

\bibitem[BV]{BV}
D.~Barbasch and D.~Vogan, {\em Primitive ideals and orbital
integrals in complex classical groups}, Math.\ Ann.\ {\bf 259}
(1982), 153--199.

\bibitem[D${}^3$HK]{DDDHK}
A.~D'Andrea, C.~De Concini, A. De Sole, R. Heluani and V. Kac, {\em
Three equivalent definitions of finite $W$-algebras}, appendix to
\cite{DK}.

\bibitem[DK]{DK}
A.~De Sole and V.~Kac,  {\em Finite vs affine $W$-algebras}, Jpn.\
J.\ Math.\ {\bf 1} (2006), 137--261.

\bibitem[Du]{Du}
M.~Duflo, {\em Sur la classification des id\'eaux primitif dans
l'algebre enveloppante d'une algebre de Lie semisimple}, Ann.\ of
Math.\ {\bf 105} (1977), 107--120.

\bibitem[EK]{EK}
A.~Elashvili and V.~Kac,  {\em Classification of good gradings of
simple Lie algebras}, in Lie groups and invariant theory
(E.~B.~Vinberg ed.), pp. 85--104, Amer.\ Math.\ Soc.\ Transl.\ {\bf
13}, AMS, 2005.

\bibitem[F]{F}
W.~Fulton,
{\em Young tableaux},
London Math.\ Soc.\ Stud.\ Texts {\bf 35}, Cambridge University Press, Cambridge, UK, 1997..

\bibitem[GG]{GG}
W.~L.~Gan and V.~Ginzburg, {\em Quantization of Slodowy slices},
Internat. Math. Res. Notices {\bf 5} (2002), 243--255.

\bibitem[Gi]{Gi}
V.~Ginzburg, {\em Harish-Chandra bimodules for quantized Slodowy
slices}, Represent.\ Theory {\bf 13} (2009), 236--271.

\bibitem[Gr]{Gr}
C. Green, {\em Some partitions associated with a partially ordered set},
J.\ of Combin.\ Theory, Ser.\ A {\bf 20} (1976), 69--79.

\bibitem[Ja]{Ja}
J. C. Jantzen, {\em Nilpotent orbits in representation theory},
Progress in Math., vol. 228, Birkh\"auser, 2004.

\bibitem[Jo1]{Jo1}
A.~Joseph,
{\em Towards the Jantzen conjecture III},
Compositio Math. {\bf 41} (1981), 23--30.

\bibitem[Jo2]{Jo2} \bysame,
{\em On the associated variety of a primitive ideal}, J.\
Algebra {\bf 93} (1985), 509--523.

\bibitem[Ko]{Ko}
B.~Kostant, {\em On Whittaker modules and representation theory},
Invent.\ Math.\ {\bf 48} (1978), 101--184.

\bibitem[Lo1]{Lo1}
I.~Losev, {\em Quantized symplectic actions and $W$-algebras}, J.\
Amer.\ Math.\ Soc.\  {\bf 23}  (2010),  no.\ 1, 35--59.

\bibitem[Lo2]{Lo2} \bysame,
{\em Finite dimensional representations of W-algebras}, preprint,
arXiv:0807.1023 (2008).

\bibitem[Lo3]{Lo3} \bysame,
{\em On the structure of the category $\mathcal O$ for
$W$-algebras}, preprint, arXiv:0812.1584 (2008).

\bibitem[Lo4]{Lo4} \bysame,
{\em Finite $W$-algebras}, preprint, arXiv:1003.5811 (2010).

\bibitem[Lu]{Lu} G.~Lusztig, {\em A class of irreducible
representations of a Weyl group}, Proc.\ Nedeerl.\ Akad., Series A
{\bf 82} (1979), 323--335.

\bibitem[MS]{MS}
D. Mili\v c\'ic and W. Soergel,
{\em  The composition series of modules induced from Whittaker modules},
Comment.\ Math.\ Helv.\ {\bf 72} (1997), 503--520.

\bibitem[Pr1]{Pr1}
A.~Premet, {\em Special transverse slices and their enveloping
algebras}, Adv.\ in Math.\ {\bf 170} (2002), 1--55.

\bibitem[Pr2]{Pr2} \bysame,
{\em Enveloping algebras of Slodowy slices and the Joseph ideal},
J.\ Eur.\ Math.\ Soc.\ {\bf 9} (2007), 487--543.

\bibitem[Pr3]{Pr3} \bysame,
{\em Primitive ideals, non-restricted representations and finite
$W$-algebras},
Mosc.\ Math.\ J. {\bf 7} (2007), 743--762.

\bibitem[Pr4]{Pr4} \bysame,
{\em Commutative quotients of finite W-algebras}, preprint,
arXiv:0809.0663 (2008).

\bibitem[Sk]{Sk} S.~Skryabin, {\em A category equivalence},
appendix to \cite{Pr1}.

\end{thebibliography}
\end{document}